\newcommand{\RR}{\mathbb R}
\newcommand{\NN}{\mathbb N}
\newcommand{\dec}{\mathcal{D}}
\newcommand{\supp}{\operatorname{supp}}
\newcommand{\cat}{\operatorname{{\bf cat}}}
\newcommand{\ucat}{\operatorname{{\bf ucat}}}
\newcommand{\gcat}{\operatorname{{\bf gcat}}}
\newcommand{\pv}{\operatorname{PV}}
\newtheorem{theorem}{Theorem}[section]
\newtheorem{proposition}[theorem]{Proposition}
\newtheorem{observation}[theorem]{Observation}
\newtheorem{lemma}[theorem]{Lemma}
\newtheorem{corollary}[theorem]{Corollary}
\newtheorem*{corollary*}{Corollary}
\newtheorem{step}{Step}
\theoremstyle{definition}
\newtheorem*{definition}{Definition}
\newtheorem*{conjecture}{Conjecture}
\newtheorem*{notation}{Notation}
\newtheorem*{example}{Example}
\newtheorem{question}{Question}
\newtheorem*{convention}{Convention}
\theoremstyle{remark}
\newtheorem*{remark}{Remark}
\author{Dejan Govc}
\address{Institute of Mathematics, Physics and Mechanics, Jadranska 19, 1000 Ljubljana, Slovenia}
\email{dejan.govc@imfm.si}
\title{Unimodal Category and the Monotonicity Conjecture}
\begin{document}

\keywords{unimodal category, monotonicity, counterexample, total variation}

\subjclass[2010]{Primary 55, 55M30, 55M99}

\begin{abstract}
We completely characterize the unimodal category for functions $f:\RR\to[0,\infty)$ using a decomposition theorem obtained by generalizing the sweeping algorithm of Baryshnikov and Ghrist. We also give a characterization of the unimodal category for functions $f:S^1\to[0,\infty)$ and provide an algorithm to compute the unimodal category of such a function in the case of finitely many critical points.

We then turn to the monotonicity conjecture of Baryshnikov and Ghrist. We show that this conjecture is true for functions on $\RR$ and $S^1$ using the above characterizations and that it is false on certain graphs and on the Euclidean plane by providing explicit counterexamples. We also show that it holds for functions on the Euclidean plane whose Morse-Smale graph is a tree using a result of Hickok, Villatoro and Wang.
\end{abstract}

\maketitle

\section{Introduction}

The unimodal category, introduced by Baryshnikov and Ghrist \cite{baryshnikov}, is an invariant of positive functions $f:X\to[0,\infty)$, analogous to the classical concepts of Lusternik-Schnirelmann category and geometric category \cite{cornea}. Whereas the latter two correspond to the minimal number of pieces of a certain type a topological space can be decomposed into, the unimodal category concerns decompositions of functions.

Motivated by statistics, a function $u:X\to[0,\infty)$ is said to be {\em unimodal} if, informally, it has a single local maximum region. The unimodal category $\ucat(f)$ is then defined as the minimum number $n$ of unimodal functions $u_1,\ldots,u_n:X\to[0,\infty)$ such that $f=\sum_{i=1}^n u_i$ pointwise. Considering $\ell^p$-combinations, $p\in(0,\infty)$, instead of sums ($\ell^1$-combinations) yields the related notion of the unimodal $p$-category $\ucat^p(f)$, which is the minimum number $n$ of unimodal functions $u_1,\ldots,u_n:X\to[0,\infty)$ such that $f=\left(\sum_{i=1}^n u_i^p\right)^{\frac1p}$ pointwise. There is also a corresponding notion of $\ucat^{\infty}(f)$ which corresponds to $\ell^{\infty}$-combinations, i.e. pointwise maxima of functions.

Not much is known about the unimodal category, even in the basic case of $X=\RR^m$, which is the most interesting case from the statistical point of view. For the case $m=1$, Baryshnikov and Ghrist provided a simple algorithm \cite{baryshnikov}, which allows us to compute the unimodal category of any function with only finitely many critical points. Their paper treats the case $m=2$ only partially and concludes with a monotonicity conjecture, which they suggest will play a key role in providing precise bounds for the unimodal category in higher dimensions. Our paper is centered around this conjecture, which can be stated as follows.

\begin{conjecture}[\cite{baryshnikov}, Conjecture 18]
Given a continuous function $f:X\to[0,\infty)$ on a topological space and $0<p_1<p_2\leq\infty$, we have
\[
\ucat^{p_1}(f)\leq\ucat^{p_2}(f).
\]
\end{conjecture}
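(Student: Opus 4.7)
The plan is to try to transform an optimal $p_2$-decomposition of $f$ into a $p_1$-decomposition with the same number of terms. Suppose $f=\left(\sum_{i=1}^n u_i^{p_2}\right)^{1/p_2}$ with each $u_i$ unimodal. A natural candidate is to set
\[
v_i := u_i^{p_2/p_1}\, f^{1-p_2/p_1}
\]
on $\{f>0\}$, extended by $0$ elsewhere. This choice is forced by the arithmetic: one checks immediately that $\sum_i v_i^{p_1}=f^{p_1-p_2}\sum_i u_i^{p_2}=f^{p_1}$, so the conjecture reduces to whether each $v_i$ is unimodal.

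Since $p_2/p_1>1$, the exponent on $f$ is negative, so $v_i$ is a unimodal factor $u_i^{p_2/p_1}$ divided by a positive power of $f$. Dividing by $f^{p_2/p_1-1}$ can manufacture new local maxima exactly where $f$ attains a local minimum that $u_i$ does not, so this construction preserves unimodality only when the critical structure of $f$ is sufficiently compatible with that of the $u_i$. On a linearly ordered domain like $\RR$ this compatibility is essentially automatic, because unimodal summands are supported on intervals; I would therefore prove the conjecture for $X=\RR$ by combining the combinatorial characterization of $\ucat$ via the Baryshnikov--Ghrist sweep with a direct induction on the number of critical points, showing that an optimal $p_1$-decomposition can be read off from the level-set combinatorics of $f$, which is independent of $p$. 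The case $X=S^1$ should then follow by cutting at a global minimum and treating the wraparound as a boundary condition.

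The main obstacle will be the general case. The algebraic candidate $v_i$ above demonstrably fails in higher dimensions, and there is no obvious substitute: $\ell^p$-decompositions for large $p$ can exploit cancellation and level-set geometry in ways that $\ell^1$ cannot imitate without extra summands. Rather than persist in a direct proof, I would at this point pivot and search for a counterexample, targeting domains (such as graphs or $\RR^2$) where $f$ can be engineered to have local minima or saddle configurations not shared by any single unimodal factor of an optimal $\ell^{p_2}$-decomposition. In effect, the structural obstruction exhibited by the formula for $v_i$ is itself a blueprint for constructing such counterexamples, which is consistent with expecting the conjecture to hold only in low-dimensional or topologically constrained settings.
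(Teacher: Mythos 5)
The conjecture as stated is in fact \emph{false}, and the paper resolves it precisely by distinguishing cases: it proves the inequality for $X=\RR$ and $X=S^1$ (and for nonresonant Morse functions on $\RR^2$ whose Morse--Smale graph is a tree), and then refutes it in general with explicit counterexamples on graphs and on $\RR^2$. You correctly anticipate this split and pivot toward seeking a counterexample, which is the right instinct, but you do not actually produce one, and a concrete counterexample is the decisive step needed here.

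Your positive sketch also has genuine gaps. The candidate $v_i = u_i^{p_2/p_1} f^{1-p_2/p_1}$ does satisfy $\sum_i v_i^{p_1} = f^{p_1}$, but it fails to preserve unimodality even on $\RR$: take a broad triangular $u_1$ on $[-2,2]$ and a narrow triangular $u_2$ peaking at $x=1$, both on $\RR$ with overlapping supports and $p_1=1$, $p_2=2$; then $v_1 = u_1^2/f$ dips near $x=1$ and rises again as $u_2$ drops back to zero, so $v_1$ is not unimodal and the claim that the compatibility is ``essentially automatic'' on a linearly ordered domain is wrong. The fallback claim that an optimal decomposition ``can be read off from the level-set combinatorics of $f$, which is independent of $p$'' is also incorrect: since $\ucat^p(f)=\ucat(f^p)$, the sweep is applied to $f^p$, and whether an interval is forced-max for $f^p$ genuinely depends on $p$. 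What the paper actually proves is the nontrivial directional statement that every forced-max interval of $f$ remains forced-max for $f^p$ when $p>1$, and this requires a quantitative majorization inequality (Karamata's inequality applied to up-down sequences of function values); that is the key lemma missing from your sketch. Finally, for $S^1$, ``cut at a global minimum'' is not justified: the paper characterizes $\ucat(f)=\max\{2,\min_{a}M_a^+(f)\}$, a minimum of a sweep count over all cut points $a$, and the minimizing cut point need not be a global minimum of $f$.
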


Computation of $\ucat$ in the case $m=2$ is treated in some more detail by Hickok, Villatoro and Wang in \cite{hickok}, which is focused on those Morse distributions on the plane whose Morse-Smale graphs are trees. For these, the unimodal category is almost completely characterized.

In this paper, which is based on the author's doctoral thesis \cite{govc}, we show that the sweeping algorithm of \cite{baryshnikov} can be generalized to functions of bounded variation $f:\RR\to[0,\infty)$ as a variant of the Jordan decomposition theorem. This can be applied to prove the monotonicity conjecture in the case $X=\RR$ and $X=S^1$. We then show that the conjecture is false if $X$ is a certain kind of graph and, more importantly, if $X=\RR^2$.

\section{Preliminaries and Previously Known Results}\label{prelims}

In this section, for the convenience of the reader, we recall some of the basic concepts we use in the proofs.

\subsection{Unimodal Category}

The concept of unimodal category is formalized as follows.

\begin{definition}[\cite{baryshnikov}]
A continuous function $u:X\to[0,\infty)$ is {\em unimodal} if there is an $M>0$ such that the superlevel sets $u^{-1}[c,\infty)$ are contractible for $0<c\leq M$ and empty for $c>M$.

Let $p\in(0,\infty)$. The {\em unimodal $p$-category} $\ucat^p(f)$ of a function $f:X\to[0,\infty)$ is the minimum number $n$ of unimodal functions $u_1,\ldots,u_n:X\to[0,\infty)$ such that pointwise, $f=(\sum_{i=1}^n u_i^p)^{\frac1p}$. Similarly, the {\em unimodal $\infty$-category} $\ucat^\infty(f)$ of a function $f:X\to[0,\infty)$ is the minimum number $n$ of unimodal functions $u_1,\ldots,u_n:X\to[0,\infty)$ such that pointwise, $f=\max_{1\leq i\leq n} u_i$. In place of $\ucat^1(f)$ we usually write $\ucat(f)$.
\end{definition}

\begin{remark}
Baryshnikov and Ghrist also consider $\gcat(f^{-1}(0,\infty))$ as a natural candidate\footnote{Note that they seem to be using the notation $\supp f$ to denote $f^{-1}(0,\infty)$, i.e. the set-theoretic support of $f$.} for the notion of $\ucat^0(f)$ and claim that $\lim_{p\downarrow 0}\ucat^p(f)=\ucat^0(f)$. We note that while this may be a natural candidate, the equation does actually not hold as stated. For instance, if $f:\RR\to[0,\infty)$ is a function with two peaks such that $f^{-1}(0,\infty)$ is connected, e.g. $f(x)=\max\{0,2-||x|-1|\}$, then $\ucat^p(f)=2$ for all $p\in(0,\infty]$ whereas $\gcat(f^{-1}(0,\infty))=1$.
\end{remark}

Arguably, the most important case to understand is $p=1$, see Lemma \ref{powers} below. Note that $\ucat^p(f)$ can only be finite if $f:X\to[0,\infty)$ is continuous, since continuity is assumed in the definition of unimodality. This motivates the following convention.

\begin{convention}
Unless explicitly noted otherwise, all functions $f:X\to[0,\infty)$ will hereafter be assumed to be continuous.
\end{convention}

Baryshnikov and Ghrist further assume that the functions they study are compactly supported. We do {\bf not} make this assumption. Because of this, our notion of unimodal category slightly differs from theirs. Note that the two notions of $\ucat^p(f)$ agree whenever $f$ is compactly supported. However, if $f$ is not compactly supported, their notion is undefined, while ours may still give a finite answer. Hence, the notion we use is a slight generalization of theirs. (Note that all the main examples and counterexamples considered in the paper will be compactly supported, so they are still valid under the original definition.)

While the definition of $\ucat^p$ makes sense for general topological spaces, it seems to be the most interesting for spaces that are not too pathological. As there are plenty of open questions already in the case when $X$ is a manifold or a CW complex, we will restrict our attention to those.

\subsection{Total Variation and Jordan Decomposition}

The concept of total variation (see e.g. \cite[Chapter 6]{apostol}) for functions $f:\RR\to[0,\infty)$ will be useful, so we recall the basic definitions and results here.

\begin{definition}
The {\em total variation} of $f:\RR\to\RR$ on the interval $[a,b]$ is defined by the formula
\[
V(f;[a,b])=\sup\sum_{i=1}^n |f(x_i)-f(x_{i-1})|,
\]
where the supremum is taken over all partitions $a=x_0<x_1<\ldots<x_n=b$ of the interval $[a,b]$. Similarly, we define the {\em positive variation} of $f$ on the interval $[a,b]$ as
\[
V^+(f;[a,b])=\sup\sum_{i=1}^n \max\{0,f(x_i)-f(x_{i-1})\},
\]
and the {\em negative variation} of $f$ on the interval $[a,b]$ as
\[
V^-(f;[a,b])=\sup\sum_{i=1}^n \max\{0,f(x_{i-1})-f(x_i)\}.
\]
\end{definition}

We will use the following basic facts:

\begin{theorem}\label{variation_props}
Let $V^*$ stand for $V,V^+$ or $V^-$ and let $f:[a,b]\to\RR$. Then the following hold:
\begin{itemize}
\item If $f$ is increasing on $[a,b]$, then $V(f;[a,b])=V^+(f;[a,b])=f(b)-f(a)$ and $V^-(f;[a,b])=0$.
\item If $f$ is decreasing on $[a,b]$, then $V(f;[a,b])=V^-(f;[a,b])=f(a)-f(b)$ and $V^+(f;[a,b])=0$.
\item If $f=\sum_{i=1}^n f_i$, then $V^*(f;[a,b])\leq\sum_{i=1}^n V^*(f_i;[a,b])$.
\item If $a=x_0<x_1<\ldots<x_n=b$, then $V^*(f;[a,b])=\sum_{i=1}^n V^*(f;[x_{i-1},x_i])$.
\item $V^+(f;[a,b])+V^-(f;[a,b])=V(f;[a,b])$ and $V^+(f;[a,b])-V^-(f;[a,b])=f(b)-f(a)$.
\end{itemize}
\end{theorem}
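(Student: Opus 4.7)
The plan is to derive each of the five bullet points directly from the definitions, using only telescoping sums, the triangle inequality, and partition refinement. Four of the five are essentially immediate; the only genuinely delicate step is the last.

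For the monotone cases, if $f$ is increasing then each signed increment $f(x_i)-f(x_{i-1})$ is nonnegative, so $|f(x_i)-f(x_{i-1})|=\max\{0,f(x_i)-f(x_{i-1})\}=f(x_i)-f(x_{i-1})$ and $\max\{0,f(x_{i-1})-f(x_i)\}=0$. Every partition sum defining $V$ and $V^+$ telescopes to the constant $f(b)-f(a)$, and every sum defining $V^-$ is $0$, so the suprema have the stated values. The decreasing case is symmetric, with the roles of $V^+$ and $V^-$ exchanged.

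For the subadditivity bullet, the termwise inequalities $|\sum_j a_j|\leq\sum_j|a_j|$ and $\max\{0,\sum_j a_j\}\leq\sum_j\max\{0,a_j\}$, applied with $a_j=f_j(x_i)-f_j(x_{i-1})$, show that every partition sum for $f$ is bounded by a sum of partition sums for the $f_j$, each of which is at most $V^*(f_j;[a,b])$; the supremum on the left then gives the claim. For additivity over subintervals, I would prove both inequalities separately: the $\geq$ direction comes from concatenating nearly optimal partitions of each $[x_{j-1},x_j]$ into one partition of $[a,b]$ and taking a supremum, while the $\leq$ direction uses that inserting a new point into a partition never decreases any of the three kinds of sum (immediate from the triangle inequality for $|\cdot|$, and from the subadditivity $\max\{0,a+b\}\leq\max\{0,a\}+\max\{0,b\}$ for the signed variants). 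Hence one may restrict attention to partitions containing all $x_j$, which split cleanly as $\sum_j P_j^*$ for induced partitions $P_j$ on $[x_{j-1},x_j]$.

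The fifth bullet is the step that actually requires a small trick and is the main obstacle. For any single partition $P$ of $[a,b]$ with partition sums $P(f)$, $P^+(f)$, $P^-(f)$, one has the identities
\[
P^+(f)-P^-(f)=\sum_i\bigl(f(x_i)-f(x_{i-1})\bigr)=f(b)-f(a)
\]
and $P^+(f)+P^-(f)=P(f)$, both by the elementary identity $\max\{0,t\}-\max\{0,-t\}=t$ and $\max\{0,t\}+\max\{0,-t\}=|t|$ applied termwise. Eliminating $P^-(f)$ yields $P(f)=2P^+(f)-(f(b)-f(a))$, an identity in $P$. Taking the supremum over $P$ on both sides gives $V(f;[a,b])=2V^+(f;[a,b])-(f(b)-f(a))$, and the identity $V^+-V^-=f(b)-f(a)$ then follows by substitution. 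Adding and subtracting these two identities yields both claimed equalities. The one subtlety is that this manipulation implicitly requires the quantities to be finite; however, the partitionwise identities force $V$, $V^+$ and $V^-$ to be simultaneously finite or simultaneously infinite, so no ambiguity arises in the relevant case.
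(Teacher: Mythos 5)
Your proposal is correct, and the argument is the standard elementary one. The paper itself does not prove this theorem; it simply cites Apostol (Theorems 6.9, 6.11, Exercise 6.7), so the honest comparison is between your worked-out proof and what that textbook would do — and they are essentially the same. Bullets one through four are exactly as you describe: telescoping for the monotone cases, termwise triangle/max inequalities for subadditivity, and partition refinement plus concatenation for additivity over subintervals. The only point worth flagging is that you correctly identified the genuine issue in the fifth bullet: one cannot pass directly from the partitionwise identity $P^+(f)-P^-(f)=f(b)-f(a)$ to $V^+-V^-=f(b)-f(a)$ by taking suprema, since $\sup(A-B)\neq\sup A-\sup B$. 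Your workaround — eliminate $P^-$ to get $P(f)=2P^+(f)-c$, note that $t\mapsto 2t-c$ commutes with $\sup$, obtain $V=2V^+-c$, and symmetrically $V=2V^-+c$, then add and subtract — is exactly right, though your phrase ``follows by substitution'' elides the symmetric step $V=2V^-+c$ which is what ``these two identities'' must refer to. You should state that second identity explicitly; as written the reader has to guess. Your closing remark on simultaneous finiteness of $V$, $V^+$, $V^-$ is also correct and is a point that is easy to overlook.
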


\begin{proof}
The first two facts are obvious. For the third and the fourth fact in the case of $V$, see Theorems 6.9 and 6.11 in \cite{apostol}. For $V^+$ and $V^-$, the idea is completely analogous. The fifth fact is a standard exercise, see \cite{apostol}, Exercise 6.7.
\end{proof}

By undergraduate measure theory, every right continuous increasing function on $\RR$ determines a Borel measure \cite[Section IV.8]{doob}, so $V,V^+$ and $V^-$ can be extended to define measures on $\RR$. We are therefore also allowed to compute these variations over open intervals or even Borel sets, but note that continuity of $f$ implies that $V^*(f;[a,b])=V^*(f;(a,b))$.

These concepts also make sense for $f:J\to\RR$, where $J\subseteq\RR$ is an interval. Functions of bounded variation, i.e. such that $V(f;J)<\infty$, are of particular interest to us. Note that the first two bullet points of Theorem \ref{variation_props} imply that monotone functions over finite intervals have bounded variation. More generally, bounded monotone functions also have bounded variation. Using the fourth bullet point, this also implies that unimodal functions $u:J\to\RR$ have bounded variation, since the domain of any such function can be split into two intervals where the function is monotone and bounded.

Any function $f:J\to\RR$ of bounded variation can be split as the difference of two monotone functions, i.e. it satisfies the following Jordan decomposition theorem:

\begin{theorem}
Suppose $f:J\to\RR$ is of bounded variation. Then $f$ can be expressed as the difference $f=g-h$ of two increasing functions $g,h:J\to\RR$.
\end{theorem}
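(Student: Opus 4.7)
My plan is to exhibit the decomposition concretely using the positive and negative variations, exactly as in the classical proof, then verify monotonicity and the defining identity by invoking the bullet points of Theorem \ref{variation_props}.

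First, I would fix an arbitrary basepoint $a\in J$. For each $x\in J$, I would set
\[
g(x)=\begin{cases} f(a)+V^+(f;[a,x]) & \text{if } x\geq a,\\ f(a)-V^+(f;[x,a]) & \text{if } x\leq a,\end{cases}
\qquad
h(x)=\begin{cases} V^-(f;[a,x]) & \text{if } x\geq a,\\ -V^-(f;[x,a]) & \text{if } x\leq a.\end{cases}
\]
(Both definitions make sense because the hypothesis $V(f;J)<\infty$ gives $V^\pm(f;[a,x]),V^\pm(f;[x,a])<\infty$ via $V^++V^-=V$ and the fourth bullet of Theorem \ref{variation_props}.)

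Next, I would check monotonicity of $g$. For $a\leq x\leq y$, the fourth bullet gives
\[
V^+(f;[a,y])=V^+(f;[a,x])+V^+(f;[x,y])\geq V^+(f;[a,x]),
\]
so $g(y)\geq g(x)$. A symmetric argument handles $x\leq y\leq a$, and the case $x\leq a\leq y$ reduces to the two previous ones by passing through the basepoint. The same reasoning applies verbatim to $h$ using $V^-$.

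Finally, I would verify $f=g-h$. For $x\geq a$, the fifth bullet of Theorem \ref{variation_props} gives $V^+(f;[a,x])-V^-(f;[a,x])=f(x)-f(a)$, hence $g(x)-h(x)=f(a)+V^+(f;[a,x])-V^-(f;[a,x])=f(x)$. For $x\leq a$, the same bullet yields $V^+(f;[x,a])-V^-(f;[x,a])=f(a)-f(x)$, so $g(x)-h(x)=f(a)-V^+(f;[x,a])+V^-(f;[x,a])=f(x)$, as required.

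There is not really a hard step here; the only subtlety is the case split around the basepoint $a$, which is unavoidable when $J$ is unbounded on the left (so that $\inf J$ is not in $J$ and one cannot simply use $V^+(f;[\inf J,x])$ as in the compact-interval version of the argument).
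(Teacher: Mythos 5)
Your proof is correct. The paper itself merely cites Apostol (Theorem 6.15) and gives the one-line hint $g(x)=V(f;J\cap(-\infty,x))$, $h(x)=g(x)-f(x)$; separately, it remarks that Section~\ref{real_line} uses the variant $g=V^+(f;J\cap(-\infty,\cdot))$, $h=V^-(f;J\cap(-\infty,\cdot))$, but only under the extra normalization $\lim_{x\to-\infty}f(x)=0$. You write out the $V^+/V^-$ variant in full but replace the ``anchor at $-\infty$'' device with an arbitrary basepoint $a\in J$, which makes the construction work verbatim for any interval $J$ (open, half-open, unbounded) with no extra hypothesis and no appeal to the measure-theoretic extension of $V^\pm$. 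The case split around $a$ is handled correctly, monotonicity follows from the additivity of $V^+$ and $V^-$ over adjacent intervals, and the identity $f=g-h$ follows from the fifth bullet of Theorem~\ref{variation_props} exactly as you state. This is a clean, self-contained substitute for the citation; the only cost is the three-way case analysis around the basepoint, which the paper's $J\cap(-\infty,x)$ formulation avoids at the price of needing either a finite left endpoint or a limit at $-\infty$.
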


\begin{proof}
See \cite{apostol}, Theorem 6.15. The result is stated there for closed intervals, but it actually holds in general. The idea is to take $g(x)=V(f;J\cap(-\infty,x))$ and $h(x)=g(x)-f(x)$.
\end{proof}

In Section \ref{real_line}, we use a slightly different decomposition, namely $g(x)=V^+(f;J\cap(-\infty,x))$ and $h(x)=V^-(f;J\cap(-\infty,x))$, assuming $\lim_{x\to-\infty}f(x)=0$. The limit at $-\infty$ can always be subtracted from $f$, so this is not really a restriction. To see that this is indeed a decomposition, use the fifth property of Theorem \ref{variation_props}.

\section{Computing the Unimodal Category}\label{computing}

Given a function $f:X\to[0,\infty)$, the most basic question we are interested in is how to compute its unimodal category $\ucat(f)$. This has been answered for functions $f:\RR\to[0,\infty)$ with finitely many critical points by Baryshnikov and Ghrist \cite[Theorem 11]{baryshnikov}, using a simple sweeping algorithm. We show that this sweeping algorithm can be seen as arising from the Jordan decomposition theorem for functions of bounded variation. This gives a complete answer to the question in the case of $X=\RR$. We can use the same idea to answer the question when $X=S^1$.

\subsection{Real Line and Intervals}\label{real_line}

We show that the method to obtain a minimal unimodal decomposition for a compactly supported continuous function $f:\RR\to[0,\infty)$ described in \cite{baryshnikov} in the case of finitely many critical points actually works for an arbitrary $f:\RR\to[0,\infty)$. First, observe that the problem is only interesting for functions of bounded variation:

\begin{proposition}
Suppose $f:\RR\to[0,\infty)$ has a unimodal decomposition $f=\sum_{i=1}^n u_i$. Then $f$ is of bounded variation on each interval $[a,b]\subseteq\RR$.
\end{proposition}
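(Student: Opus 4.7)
The plan is to reduce the statement to the already-observed fact (stated in the paragraph following Theorem \ref{variation_props}) that unimodal functions on intervals have bounded variation, and then invoke the subadditivity of total variation under finite sums.

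First, I would argue that each summand $u_i$ has bounded variation on any finite interval $[a,b]\subseteq\RR$. By definition of unimodality, the superlevel sets $u_i^{-1}[c,\infty)$ are contractible for $0<c\leq M_i$ (where $M_i=\sup u_i$) and empty for $c>M_i$. Since in $\RR$ the contractible closed sets are precisely the closed intervals, one can show that $u_i$ is non-decreasing on some ray $(-\infty,\alpha_i]$, equal to $M_i$ on $[\alpha_i,\beta_i]$, and non-increasing on $[\beta_i,\infty)$: if $u_i$ failed to be monotone on, say, the left ray, then an intermediate value $c$ would pick out a superlevel set that is not an interval, contradicting contractibility. Thus $u_i$ is monotone and bounded by $M_i$ on each of the two half-lines, so by the first two bullets of Theorem \ref{variation_props} applied on $[a,b]\cap(-\infty,\alpha_i]$, $[a,b]\cap[\alpha_i,\beta_i]$, and $[a,b]\cap[\beta_i,\infty)$, together with the fourth bullet (additivity over partitions), we obtain
\[
V(u_i;[a,b])\leq 2M_i<\infty.
\]

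Second, I would apply the third bullet of Theorem \ref{variation_props} (subadditivity of variation under finite sums) to the decomposition $f=\sum_{i=1}^n u_i$, yielding
\[
V(f;[a,b])\leq\sum_{i=1}^n V(u_i;[a,b])\leq 2\sum_{i=1}^n M_i<\infty,
\]
which is the claim.

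There is no real obstacle here; the only mildly delicate point is the derivation of the two-sided monotonicity of a unimodal function from the contractibility of its superlevel sets, but this is an elementary consequence of the fact that contractible subsets of $\RR$ are intervals, combined with an intermediate-value argument. Everything else is a direct appeal to Theorem \ref{variation_props}.
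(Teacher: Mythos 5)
Your proposal is correct and follows essentially the same route as the paper: invoke the fact (already noted after Theorem \ref{variation_props}) that unimodal functions have bounded variation, then apply subadditivity of total variation to the finite sum $f=\sum_{i=1}^n u_i$. You simply spell out the monotonicity argument for unimodal functions in more detail than the paper does, but the substance is identical.
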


\begin{proof}
Let $[a,b]$ be an arbitrary interval. Then
\[
V(f;[a,b])=V\left(\sum_{i=1}^n u_i;[a,b]\right)\leq\sum_{i=1}^n V(u_i;[a,b])<\infty,
\]
since unimodal functions are of bounded variation.
\end{proof}

We may therefore restrict our attention to functions of bounded variation. For a complete characterization of functions $f:\RR\to[0,\infty)$ with finite unimodal category, see Theorem \ref{bv_has_ucat} below.

\begin{remark}
The fact that unimodal functions have bounded variation is specific to $\RR$. It is not difficult to construct a unimodal function $u:\RR^2\to[0,\infty)$ which does not have bounded variation. Take any differentiable function $f:(0,1)\to\RR$ whose total variation is unbounded and $0\leq f(x)\leq 1$ for all $x$. Further assume that the limits $\lim_{x\to0}f(x)$ and $\lim_{x\to1}f(x)$ exist. For instance, one might take
\[
f(x)=x(1+\sin\tfrac{\pi}x).
\]
Define $u:[0,1]\times[0,1]\to[0,\infty)$ on $(0,1)\times(0,1)$ by
\[
u(x,t)=tf(x)+(1-t)
\]
and by its unique continuous extension elsewhere. This is a unimodal function but does not have bounded variation. Recall that the total variation of a differentiable function can be computed as the integral of the norm of its gradient:
\[
V(u;[0,1]\times[0,1])=\int_0^1\int_0^1\sqrt{u_x^2+u_t^2}\mathrm dx\mathrm dt\geq\int_0^1\int_0^1|u_x|\mathrm dx\mathrm dt=\int_0^1t\mathrm dt\int_0^1|f'(x)|\mathrm dx=\tfrac12V(f;(0,1))=\infty.
\]
If desired, $u$ can be extended to a unimodal function on $\RR^2$.
\end{remark}

To see that the method of Ghrist and Baryshnikov generalizes, we first suitably modify Proposition 10 of \cite{baryshnikov}. First, recall Definition 7 of \cite{baryshnikov}:
\begin{definition}
Let $\dec=(u_i)_{i=1}^n$ be a unimodal decomposition of $f:\RR\to[0,\infty)$. An open interval $(x,y)$ is {\em $\dec$-max-free} if it contains no local maxima of any of $u_i$.
\end{definition}

In \cite{baryshnikov}, the authors only use this concept for intervals bounded by local minima of $f$. However, it is more generally applicable:

\begin{proposition}\label{maxfree}
If an {\em arbitrary} open interval $(x,y)$ is $\dec$-max-free, where $\dec=(u_i)_{i=1}^n$ is a unimodal decomposition of $f:\RR\to[0,\infty)$, then
\[
V^-(f;(x,y))\leq f(x).
\]
\end{proposition}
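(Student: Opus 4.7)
The plan is to exploit the rigid shape of unimodal functions on $\RR$: each $u_i$ is monotone on each side of its ``max region.'' Concretely, from the definition, the superlevel sets $u_i^{-1}[c,\infty)$ are contractible, hence (being subsets of $\RR$) are intervals, and at $c=M_i:=\max u_i$ the set is nonempty, so the max region $R_i:=u_i^{-1}(M_i)$ is a nonempty interval. A quick check using the nesting of superlevel sets then shows that $u_i$ is non-decreasing on everything left of $R_i$ and non-increasing on everything right of $R_i$. Every point of $R_i$ is a local maximum of $u_i$, so the $\dec$-max-free hypothesis forces $R_i\cap(x,y)=\emptyset$; since both $R_i$ and $(x,y)$ are intervals, this means $R_i$ lies entirely to the left of $(x,y)$ or entirely to the right.

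Thus for each $i$, the restriction of $u_i$ to $[x,y]$ is monotone: non-increasing if $R_i$ is to the left of $(x,y)$, non-decreasing if $R_i$ is to the right. Partition the indices accordingly as $\{1,\ldots,n\}=P\sqcup Q$ (placing any $u_i$ that is constant on $[x,y]$ in either part), and set $f_P=\sum_{i\in P}u_i$, $f_Q=\sum_{i\in Q}u_i$. Then $f=f_P+f_Q$ on $[x,y]$, with $f_P$ non-increasing and $f_Q$ non-decreasing.

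Now combine the bullets of Theorem \ref{variation_props}: by subadditivity of $V^-$ and the fact that $V^-$ of a non-decreasing function is $0$,
\[
V^-(f;(x,y))\leq V^-(f_P;(x,y))+V^-(f_Q;(x,y))=V^-(f_P;(x,y)).
\]
Because $f_P$ is non-increasing on $[x,y]$, the second bullet gives $V^-(f_P;[x,y])=f_P(x)-f_P(y)$, and by continuity of $f_P$ this equals $V^-(f_P;(x,y))$. Since all $u_i$ are nonnegative, $f_P(y)\geq 0$ and $f_P(x)\leq f(x)$, yielding $V^-(f;(x,y))\leq f_P(x)-f_P(y)\leq f_P(x)\leq f(x)$.

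I do not expect any real obstacle here — the only subtlety is making sure that ``no local maxima in $(x,y)$'' really does force $R_i$ out of $(x,y)$ (rather than merely, say, requiring that the max not be attained in the interior), which is why I first verify that $R_i$ is a nonempty interval consisting of local maxima. Once that is settled, everything reduces to the basic Jordan-style decomposition $f=f_P+f_Q$ and the elementary variation bullets already recorded.
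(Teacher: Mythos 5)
Your proof is correct and uses essentially the same core argument as the paper: the max-free hypothesis makes each $u_i$ monotone on $(x,y)$, after which subadditivity of $V^-$ and the elementary variation formulas for monotone functions give the bound. The only cosmetic differences are that you group the summands into a non-increasing part $f_P$ and a non-decreasing part $f_Q$ instead of bounding $V^-(u_i;(x,y))=\max\{0,u_i(x)-u_i(y)\}$ for each $i$ separately, and that you spell out carefully why the max region $R_i$ must avoid $(x,y)$ --- a point the paper's one-line proof leaves implicit.
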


\begin{proof}
The idea is the same as in \cite{baryshnikov}:
\[
V^-(f;(x,y))\leq \sum_{i=1}^n V^-(u_i;(x,y))=\sum_{i=1}^n\max\{0,u_i(x)-u_i(y)\}\leq\sum_{i=1}^n u_i(x)=f(x),
\]
where the first equality uses the max-free condition.
\end{proof}

So, the concept of {\bf forced-max} interval from \cite{baryshnikov} makes sense even if the interval is not bounded by local minima:

\begin{definition}
An interval $(x,y)$ is called {\bf forced-max} (with respect to $f$) if
\[
V^-(f;(x,y))>f(x).
\]
In addition to this, we will call an interval $(x,y)$ {\bf almost forced-max} if $(x,y+\delta)$ is forced-max for each $\delta>0$. If $f:\RR\to[0,\infty)$ is compactly supported, we define $M(f)$ to be the maximum number of pairwise disjoint open intervals that are forced-max with respect to $f$, and $\widetilde M(f)$ to be the maximum number of pairwise disjoint open intervals that are almost forced-max with respect to $f$. (It is important to use open intervals here, as closed intervals with the same endpoints could intersect.)

More precisely, we should separate two cases here: if there is a finite bound to the number of such intervals, the maximum indeed exists and can be realized by a collection of such intervals; however, if there is no upper bound, this does not by itself imply the existence of an infinite collection of such intervals. We resolve this issue in Proposition \ref{realization}, where we show that such a collection does indeed always exist.
\end{definition}

The observation from \cite{baryshnikov} that (almost) forced-max intervals form an ideal in the sense that an interval containing an (almost) forced-max interval is itself (almost) forced-max, remains valid in this context. We now show that the numbers $M(f)$ and $\widetilde M(f)$ always agree. For this reason we only speak of $M(f)$ in the rest of the text.

\begin{proposition}\label{almost}
If $f:\RR\to[0,\infty)$ is a compactly supported function of bounded variation, we have
\[
\widetilde M(f)=M(f).
\]
\end{proposition}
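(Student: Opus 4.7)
The proof splits into two inequalities. The direction $M(f) \leq \widetilde M(f)$ will be immediate from monotonicity of $V^-$ on nested intervals: if $V^-(f;(x,y))>f(x)$, then $V^-(f;(x,y+\delta)) \geq V^-(f;(x,y)) > f(x)$ for every $\delta > 0$, so every forced-max interval is automatically almost forced-max, and any maximum disjoint family of the former also serves as one of the latter.

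For the harder direction $\widetilde M(f) \leq M(f)$, the plan is to take $n$ pairwise disjoint almost forced-max intervals $I_k = (x_k, y_k)$, ordered so $x_1 < y_1 \leq x_2 < y_2 \leq \ldots \leq x_n < y_n$, and produce $n$ pairwise disjoint forced-max intervals of the form $J_k = (x_k + \epsilon_k,\, y_k + \delta_k)$. The shifts will be $\delta_k > 0$ (small right-extensions) together with $\epsilon_k = \delta_{k-1}$ at touching positions where $y_{k-1} = x_k$, and $\epsilon_k = 0$ otherwise. This choice automatically keeps the $J_k$'s pairwise disjoint, since consecutive $J_k,J_{k+1}$ then either share only the endpoint $y_k + \delta_k$ (touching case) or have a positive gap (provided one also takes $\delta_k < x_{k+1} - y_k$ in the non-touching case).

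To verify that each $J_k$ can be made forced-max, I would use additivity of the measure $V^-$ together with the identity $V^+(f;[a,b]) - V^-(f;[a,b]) = f(b)-f(a)$ from Theorem \ref{variation_props}, and the fact that a continuous $f$ produces no atoms of $V^{\pm}$, to rewrite the forced-max condition for $J_k$ as
\[
V^-(f;(x_k, y_k + \delta_k)) - f(x_k) \;>\; V^+(f;(x_k, x_k + \epsilon_k)).
\]
The left-hand side is strictly positive for every $\delta_k > 0$ by the almost-forced-max property of $I_k$, while the right-hand side tends to $0$ as $\epsilon_k \to 0^+$ by continuity from above of the measure $V^+$. The main obstacle is handling a maximal chain $I_k, I_{k+1}, \ldots, I_{k+\ell}$ of consecutive touching intervals, where the $\epsilon$'s and $\delta$'s become linked through $\epsilon_{k+j} = \delta_{k+j-1}$; I plan to resolve this by choosing the $\delta$'s from right to left inside each chain: first fix $\delta_{k+\ell}$ subject only to the disjointness bound coming from beyond the chain, and then successively choose each $\delta_{k+j-1} > 0$ small enough that $V^+(f;(x_{k+j}, x_{k+j}+\delta_{k+j-1}))$ is less than the already-fixed positive quantity $V^-(f;(x_{k+j}, y_{k+j}+\delta_{k+j})) - f(x_{k+j})$.

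Finally, if $\widetilde M(f) = \infty$, the finite construction above applied to any $n$-subfamily of almost forced-max intervals yields $n$ disjoint forced-max intervals, so $M(f) \geq n$ for every $n$ and hence $M(f) = \infty = \widetilde M(f)$ in this case as well.
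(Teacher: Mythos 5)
Your proof is correct and follows essentially the same route as the paper: both establish $\widetilde M(f)\le M(f)$ by iterating from right to left over the intervals, nudging each left endpoint rightward by a small amount so that the adjusted interval remains forced-max while the interval to its left becomes genuinely forced-max (using continuity of $f$, equivalently the absence of atoms in $V^{\pm}$). The only cosmetic differences are that the paper first reduces, without loss of generality, to the case where the intervals touch and span the support (so the rightmost interval needs no adjustment since $f$ vanishes beyond the support), whereas you keep track of touching versus non-touching positions explicitly, and you make the smallness argument transparent by rewriting the forced-max condition via the identity $V^+ - V^- = f(b)-f(a)$.
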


\begin{proof}
The inequality $\widetilde M(f)\geq M(f)$ trivially follows from the definitions. It remains to show that $\widetilde M(f)\leq M(f)$. We do this by an inductive construction. Suppose $(x_0,y_0),...,(x_{n-1},y_{n-1})$ is an arbitrary family of pairwise disjoint almost forced-max intervals. We may assume without loss of generality that $x_0<x_1<\ldots<x_n:=y_{n-1}$ and that $y_i=x_{i+1}$ for $i=1,\ldots,n-1$ and $x_0=\inf\supp f$, $x_n=\sup\supp f$. We claim that it is possible to choose $\delta_1,\ldots,\delta_{n-1}>0$ such that the intervals $(x_0,x_1+\delta_1),(x_1+\delta_1,x_2+\delta_2),\ldots,(x_{n-1}+\delta_{n-1},x_n)$ are forced-max and clearly these are still disjoint.

We prove this by induction from $n$ downwards. For the base case, we do not actually change anything, we just note that $(x_{n-1},x_n)$ is already forced-max. This is because $V^-(f;[x_{n-1},x_n])=V^-(f;[x_{n-1},x_n+\delta])$, since $f$ is zero on $[x_n,\infty)$. Assuming that $\delta_k,\ldots_,\delta_{n-1}$ (possibly none of them in the case $k=n$) have already been chosen such that $(x_{k-1},x_k+\delta_k),\ldots,(x_{n-1}+\delta_{n-1},x_n)$ are forced-max, we observe that since $V^-(f;[x_{k-1},x_k+\delta_k])>f(x_{k-1})$ and $f$ is continuous, a $\delta_{k-1}>0$ exists such that $V^-(f;[x_{k-1}+\delta_{k-1},x_k+\delta_k])>f(x_{k-1}+\delta_{k-1})$. The interval $(x_{k-2},x_{k-1}+\delta_{k-1})$ thus becomes genuinely forced-max.
\end{proof}

Furthermore, Theorem 11 in \cite{baryshnikov} states that $\ucat(f)=M(f)$ for functions that are nice enough. This can also be generalized to our context. In fact, we are going to explicitly construct a unimodal decomposition of $f$. This is achieved by a recursive construction, which generalizes the sweeping algorithm from \cite{baryshnikov}. For convenience, we first construct a minimal unimodal decomposition in the compactly supported case. The general case is then treated in Proposition \ref{intervals} below.

\begin{theorem}\label{sweeping}
If $f:\RR\to[0,\infty)$ is compactly supported and of bounded variation, then
\[
\ucat(f)=M(f)
\]
holds. If $M(f)=n<\infty$, then an explicit minimal unimodal decomposition $(u_i)_{i=1}^n$ is obtained by the following procedure. First, extend $f$ to $\overline{\RR}=[-\infty,\infty]$ and define $g,h:\overline{\RR}\to[0,\infty)$ by
\[
g(x)=V^+(f;(-\infty,x])\qquad\text{and}\qquad h(x)=V^-(f;(-\infty,x]).
\]
Recursively define a finite sequence\footnote{We use the standard convention that $\inf\emptyset=\infty$ and $(a,b)=\emptyset$ if $a\geq b$.} $(x_i)_{i=0}^n$:
\begin{align*}
x_0&=-\infty,\\
x_i&=\inf\{x\mid V^-(f;(x_{i-1},x))>f(x_{i-1})\},\qquad i=1,\ldots,n,\\
x_{n+1}&=\infty.
\end{align*}
Finally, define $u_i:\RR\to[0,\infty)$ by
\[
u_i(x)=\begin{cases}
0;&x\leq x_{i-1},\\
g(x)-g(x_{i-1});&x\in[x_{i-1},x_i],\\
h(x_{i+1})-h(x);&x\in[x_i,x_{i+1}],\\
0;&x\geq x_{i+1}.
\end{cases}
\]
\end{theorem}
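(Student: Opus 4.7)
The plan splits naturally into two halves: the lower bound $\ucat(f) \geq M(f)$, and the verification that the sweeping construction produces a valid unimodal decomposition of size $n = M(f)$.

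For the lower bound, let $\dec = (v_i)_{i=1}^k$ be any unimodal decomposition of $f$ and let $(a_1,b_1),\ldots,(a_n,b_n)$ be pairwise disjoint forced-max intervals. By Proposition \ref{maxfree}, none of the $(a_j,b_j)$ can be $\dec$-max-free, so each contains a local maximum of some $v_{i_j}$. A unimodal function on $\RR$ has connected (hence interval-shaped) superlevel sets, so it is quasiconcave with a single peak region; the indices $i_j$ are therefore pairwise distinct, giving $k \geq n$. Taking infima yields $\ucat(f) \geq M(f)$.

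For the construction, the key observation is that the continuity of $f$ makes $V^-(f;(-\infty,\cdot))$ continuous, so each defining infimum is attained and
\[
h(x_i) - h(x_{i-1}) = V^-(f;(x_{i-1},x_i)) = f(x_{i-1}).
\]
Combined with the Jordan-type identity $g - h = f$ (valid because $f$ is compactly supported, so $g(-\infty) = h(-\infty) = 0$), this yields the crucial equalities $g(x_{i-1}) = h(x_i)$ and $g(x_i) - g(x_{i-1}) = h(x_{i+1}) - h(x_i) = f(x_i)$. These give continuity of each $u_i$ at the break points $x_{i-1}$, $x_i$, $x_{i+1}$, while unimodality is immediate: $g$ and $h$ are nondecreasing, so $u_i$ rises on $[x_{i-1},x_i]$ and falls on $[x_i,x_{i+1}]$, producing a tent with peak $f(x_i)$. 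The sum telescopes on each $[x_{i-1},x_i]$ via
\[
u_{i-1}(x) + u_i(x) = \bigl(h(x_i) - h(x)\bigr) + \bigl(g(x) - g(x_{i-1})\bigr) = g(x) - h(x) = f(x).
\]

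Two delicate points remain. First, one must verify that $x_1, \ldots, x_n$ are all finite and that $(x_n, \infty)$ fails to be forced-max. Finiteness follows from a greedy-optimality argument: the recursively built intervals $(x_{i-1}, x_i)$ are pairwise disjoint and almost forced-max, and if some $x_i$ with $i \leq n$ were infinite one could splice in the tail of a witnessing family for $M(f) = n$ to contradict the greedy choice. Termination after exactly $n$ steps is enforced via Proposition \ref{almost}: a further finite $x_{n+1}$ would produce $n+1$ disjoint almost-forced-max intervals. Second, the tail identity $u_n = f$ on $[x_n, \infty)$ requires $V^+(f;(x_n,\infty)) = 0$; this follows by combining $V^-(f;(x_n,\infty)) \leq f(x_n)$ (the tail is not forced-max) with $V^+ - V^- = f(\infty) - f(x_n) = -f(x_n)$, forcing equality throughout. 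I expect the greedy-optimality step for finiteness to be the main obstacle, requiring careful induction to compare the recursively chosen $x_i$ with an arbitrary witnessing family for $M(f)$.
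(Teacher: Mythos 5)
Your proposal follows the paper's proof essentially step for step: the lower bound via Proposition \ref{maxfree}, and the construction via the Jordan-type splitting $f=g-h$ using positive and negative variations, with well-definedness hinging on $g(x_{i-1})=h(x_i)$ and the sum telescoping. You do go a bit beyond the paper in explicitly flagging and filling two details that the paper elides: that the recursively defined $x_1,\ldots,x_n$ are finite and $(x_n,\infty)$ is not forced-max (which requires a greedy-optimality comparison against a witnessing family for $M(f)$, via Proposition \ref{almost}), and the tail identity $V^+(f;(x_n,\infty))=0$ needed for $u_n$ to agree with $f$ on the last interval — both of which are genuinely necessary for the construction to close, so your extra care is warranted.
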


\begin{proof}
The inequality $\ucat(f)\geq M(f)$ follows directly from the definitions. This is because given any unimodal decomposition of $f$, each forced-max interval must contain a local maximum of some unimodal summand in this decomposition, hence there are at least as many summands as there are disjoint forced-max intervals. In particular, if $M(f)=\infty$, we immediately have $\ucat(f)=\infty$.

Now, assume $M(f)<\infty$ and let $M(f)=n\in\NN_0$. Note that $f=g-h$ and that $g$ and $h$ are increasing functions. This latter fact implies that each function $u_i$ is increasing on $(-\infty,x_i]$ and decreasing on $[x_i,\infty)$ and therefore unimodal.

To see that the functions $u_i$ are well defined, we have to show that $g(x_i)-g(x_{i-1})=h(x_{i+1})-h(x_i)$ holds for $1\leq i\leq n$. In fact, we can show more, i.e. that $g(x_{i-1})=h(x_i)$. First, observe that $V^-(f;[x_{i-1},x_i])=f(x_{i-1})$. If $x_i<\infty$, this is true by definition of $x_i$, since $(x_{i-1},x_i+\delta)$ is forced-max if $\delta>0$ and is not forced-max if $\delta<0$. If $x_i=\infty$, this is trivial in the case $x_{i-1}=\infty$, otherwise it follows by the definition of $x_i$ that $(x_{i-1},x_i)$ is not forced-max, so $V^-(f;[x_{i-1},x_i])\leq f(x_{i-1})$, and $V^-(f;[x_{i-1},x_i])\geq f(x_{i-1}) - f(x_i)= f(x_{i-1})$ is true by definition of $V^-$. We therefore have:
\begin{align*}
g(x_{i-1})&=V^+(f;(-\infty,x_{i-1}])\\
&=f(x_{i-1})+V^-(f;(-\infty,x_{i-1}])\\
&=V^-(f;[x_{i-1},x_i])+V^-(f;(-\infty,x_{i-1}])\\
&=V^-(f;(-\infty,x_i])\\
&=h(x_i).
\end{align*}
This establishes that the functions $u_i$ are well defined. Clearly, they are also unimodal. Finally, we also have $f=\sum_{i=1}^n u_i$. To see this, observe that for $x\in[x_{i-1},x_i]$, we have:
\[
\sum_{i=1}^n u_i(x)=g(x)-g(x_{i-1})+h(x_i)-h(x)=f(x).
\]
This concludes the proof.
\end{proof}

A corollary of this is that in the case $M(f)=\infty$, we can actually find an infinite set of pairwise disjoint almost forced-max intervals.

\begin{corollary}\label{realization}
Suppose $f:\RR\to[0,\infty)$ is compactly supported and of bounded variation. If $M(f)=\infty$, there is an infinite set of disjoint almost forced-max intervals with respect to $f$.
\end{corollary}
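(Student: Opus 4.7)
The plan is to run the recursive construction from Theorem~\ref{sweeping} and analyze what goes wrong when $M(f) = \infty$. Define $x_0 = -\infty$ and $x_i = \inf\{x \mid V^-(f;(x_{i-1},x)) > f(x_{i-1})\}$ for $i \geq 1$, using the convention $\inf \emptyset = \infty$. There are three scenarios to consider.

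If at some step the recursion reaches $x_{n+1} = \infty$ with $x_0 < x_1 < \cdots < x_n$ all finite, then the formula from Theorem~\ref{sweeping} yields a unimodal decomposition of $f$ with $n$ summands. Inspecting that proof, the well-definedness of the $u_i$ relies only on the infimum characterization of the $x_i$, not on the hypothesis $M(f) = n$ itself, so this gives $\ucat(f) \leq n$, contradicting $\ucat(f) \geq M(f) = \infty$. If instead the sequence is infinite, strictly increasing, and all $x_i$ are finite, then by the infimum definition $(x_{i-1}, x_i + \delta)$ is forced-max for every $\delta > 0$, so $(x_{i-1}, x_i)$ is almost forced-max, and the intervals are pairwise disjoint, giving the required infinite family.

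The remaining case is when the recursion sticks at some $c := x_i = x_{i-1}$. Since $V^-$ defines a finite Borel measure, $V^-(f;(c, c+\epsilon)) \to 0$ as $\epsilon \to 0^+$ by continuity from above; the stuck condition $V^-(f;(c, c+\epsilon)) > f(c)$ for every $\epsilon > 0$ therefore forces $f(c) = 0$ while $V^-$ still has positive mass in every right-neighborhood of $c$. In this case I build the family directly: pick $\beta_1 > 0$ with $V^-(f;(c, c+\beta_1)) > 0$, and inductively choose $\beta_{k+1} \in (0, \beta_k)$ small enough that both $V^-(f;(c, c+\beta_{k+1})) < \tfrac{1}{2} V^-(f;(c, c+\beta_k))$ and $f(c+\beta_{k+1}) < \tfrac{1}{2} V^-(f;(c, c+\beta_k))$ hold (possible by the continuity of $V^-$ from the right at $c$ and of $f$ at $c$). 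Subtracting gives $V^-(f;(c+\beta_{k+1}, c+\beta_k)) > f(c+\beta_{k+1})$, so $(c+\beta_{k+1}, c+\beta_k)$ is forced-max; enlarging the right endpoint only increases $V^-$, so the interval is almost forced-max as well. These intervals share only endpoints and hence are pairwise disjoint.

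The main obstacle is the stuck case: near a zero of $f$ where the negative variation accumulates on the right, the leftmost-first sweep fails to make progress, so one cannot simply read the desired family off the sequence $(x_i)$. The saving observation is that the very condition which causes the recursion to stick, namely $f(c) = 0$ combined with $V^-(f;(c, c+\epsilon)) > 0$ for every $\epsilon > 0$, is exactly what enables the direct construction of accumulating forced-max intervals above.
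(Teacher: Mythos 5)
Your proof is correct and follows the same overall strategy as the paper, namely analyzing the recursive sequence $(x_i)$ from Theorem~\ref{sweeping}, but it identifies and resolves a case that the paper's very terse proof appears to gloss over. The paper asserts that if $(x_i)$ is not strictly increasing then ``using the theorem we would have a finite unimodal decomposition.'' That reasoning is valid when the recursion terminates with some $x_{n+1}=\infty$ (your Case~1), but it does not cover your Case~3, where the recursion sticks at a finite $c=x_{i-1}=x_i$: in that situation the sweeping formula yields $u_i\equiv 0$, the partial sum of the constructed summands cannot reproduce $f$ to the right of $c$, and no finite decomposition is obtained, so the parenthetical does not go through. Your observation, via continuity from above of the finite Borel measure $V^-$, that sticking forces $f(c)=0$ while $V^-(f;(c,c+\epsilon))>0$ for every $\epsilon>0$ is exactly right, and this situation can genuinely occur for compactly supported functions of bounded variation (e.g.\ $x\mapsto x^2(1+\sin(1/x))$ near $0$, suitably extended); your direct construction of the accumulating forced-max intervals $(c+\beta_{k+1},c+\beta_k)$ then supplies the required infinite disjoint family. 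In short, the route is the paper's, but yours is the complete version of the argument.
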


\begin{proof}
The fact that $M(f)=\infty$ implies that the recursively defined sequence
\begin{align*}
x_0&=-\infty,\\
x_i&=\inf\{x\mid V^-(f;(x_{i-1},x))>f(x_{i-1})\},\qquad i\in\NN
\end{align*}
is strictly increasing (otherwise, using the theorem we would have a finite unimodal decomposition). This gives us infinitely many almost-forced max intervals.
\end{proof}

From the explicit construction above, it also follows that a function with finite unimodal category must be well-behaved near the boundary of its support:

\begin{corollary}\label{increasing}
Suppose $f:\RR\to[0,\infty)$ has finite unimodal category and $\supp f=[a,b]\subseteq\RR$. Then there is an $\epsilon>0$ such that $f$ is increasing on $[a,a+\epsilon]$ and decreasing on $[b-\epsilon,b]$.
\end{corollary}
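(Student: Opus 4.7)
My plan is to fix any minimal unimodal decomposition $f = \sum_{i=1}^n u_i$ with $n = \ucat(f) < \infty$ and analyse each summand near the endpoint $a$; the argument near $b$ is symmetric. Since $\supp f = [a,b]$ is nonempty and closed, $a < b$ and by continuity $f(a) = 0$; non-negativity of the $u_i$ then forces $u_i(a) = 0$ for every $i$. The key observation is that every unimodal $u:\RR \to [0,\infty)$ with maximum $M > 0$ attained at some $p$ is non-decreasing on $(-\infty, p]$ and non-increasing on $[p, \infty)$: if $s < t \le p$ with $u(s) > u(t)$, then for any $c \in (u(t), u(s)] \subseteq (0, M]$ the superlevel set $u^{-1}[c, \infty)$ contains $s$ and $p$ but not $t$, hence is disconnected in $\RR$, contradicting the contractibility required by the definition of unimodality.

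Apply this observation to each $u_i$. Since $u_i(a) = 0 < M_i$, the peak $p_i$ satisfies $p_i \ne a$. If $p_i < a$, then the non-increasing property on $[p_i, \infty)$ together with $u_i(a) = 0$ and $u_i \ge 0$ forces $u_i \equiv 0$ on $[a, \infty)$. If $p_i > a$, then $u_i$ is non-decreasing on $[a, p_i]$. Set
\[
\epsilon_L = \min\{p_i - a : 1 \le i \le n,\ p_i > a\}.
\]
This set is nonempty (otherwise $f \equiv 0$ on $[a, \infty)$, contradicting $\supp f = [a,b]$) and consists of finitely many positive numbers, so $\epsilon_L > 0$. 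On $[a, a + \epsilon_L]$ every $u_i$ is non-decreasing (either identically zero or lying on its rising part), hence so is $f = \sum_i u_i$. A symmetric argument produces $\epsilon_R > 0$ such that $f$ is non-increasing on $[b - \epsilon_R, b]$, and $\epsilon = \min(\epsilon_L, \epsilon_R)$ is the required constant.

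The only non-routine step is the monotonicity-from-unimodality observation in the first paragraph; once this is in hand, everything reduces to a short case analysis over the finitely many summands. In particular, the proof uses only the existence of some minimal unimodal decomposition (guaranteed by the hypothesis $\ucat(f) < \infty$), and does not rely on the explicit sweeping construction of Theorem~\ref{sweeping}.
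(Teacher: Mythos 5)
Your proof is correct, and it takes a genuinely different route from the paper's. The paper derives Corollary~\ref{increasing} from the explicit sweeping construction of Theorem~\ref{sweeping}: since for $M(f)=n<\infty$ the sweeping procedure must produce a valid decomposition, the first threshold $x_1$ must lie strictly to the right of $a$ (otherwise all $x_i$ collapse to $a$ and the last summand would require $h(\infty)=V^-(f;\RR)=0$, forcing $f\equiv 0$), and on $(-\infty,x_1]$ the function $f$ coincides with the increasing function $g=V^+(f;(-\infty,\cdot\,])$. Your argument bypasses the sweeping machinery entirely: you take \emph{any} minimal unimodal decomposition, prove directly from the definition that a unimodal function on $\RR$ is non-decreasing up to any point where its maximum is attained and non-increasing afterwards (a superlevel set of a unimodal $u:\RR\to[0,\infty)$ at a positive level is contractible, hence an interval), and then observe that each summand is non-decreasing on a common interval $[a,a+\epsilon_L]$ with $\epsilon_L>0$ because there are only finitely many summands. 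This is more elementary and more self-contained, requiring only the definition of unimodality rather than the full Jordan-decomposition apparatus; the cost is that it gives no information about how $\epsilon$ relates to the structural data $(x_i)$ of the canonical decomposition, which the paper's approach does. One small quibble: ``$\supp f=[a,b]$ nonempty and closed'' does not by itself give $a<b$; you need continuity of $f$ (if $a=b$ the open set $f^{-1}(0,\infty)$ would have to be a nonempty subset of a singleton), but this is exactly what you invoke in the next breath, so the argument stands.
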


This allows us to completely characterize the functions with finite unimodal category.

\begin{theorem}\label{bv_has_ucat}
A compactly supported function $f:\RR\to[0,\infty)$ of bounded variation has finite unimodal category if and only if:
\begin{itemize}
\item $f^{-1}(0,\infty)=\bigcup_{j=1}^m(a_j,b_j)$ for some $-\infty\leq a_1<b_1\leq a_2<b_2\leq\ldots\leq a_m<b_n\leq\infty$, and
\item there is an $\epsilon>0$ such that $f$ is increasing on $[a_j,a_j+\epsilon]$ and decreasing on $[b_j-\epsilon,b_j]$ for each $j=1,\ldots,m$.
\end{itemize}
\end{theorem}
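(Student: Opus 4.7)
The plan is to prove both directions using a componentwise decomposition together with Theorem \ref{sweeping} and Corollary \ref{increasing}.

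For the forward direction ($\Rightarrow$), the first step is to show that in any unimodal decomposition $f=\sum_{i=1}^n u_i$, each summand $u_i$ is supported in a single connected component of $f^{-1}(0,\infty)$. This is because $u_i^{-1}(0,\infty)=\bigcup_{c>0}u_i^{-1}[c,\infty)$ is a nested union of contractible (hence connected) sets, so is itself connected; and since $u_i\leq f$, it is a connected subset of $f^{-1}(0,\infty)$ and therefore lies in one component. Two consequences follow: (a) $f^{-1}(0,\infty)$ has at most $n$ components, so it is a finite union of open intervals $\bigcup_{j=1}^m(a_j,b_j)$ with $m\leq n$, giving the first bullet; and (b) for each component $(a_j,b_j)$, the summands supported there assemble into a finite unimodal decomposition of $f_j:=f\cdot\chi_{[a_j,b_j]}$ (extended by zero), which therefore has finite unimodal category with $\supp f_j=[a_j,b_j]$. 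Applying Corollary \ref{increasing} to each $f_j$ yields the monotonicity condition near $a_j$ and $b_j$.

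For the backward direction ($\Leftarrow$), I would first reduce to a single component: since $f=\sum_{j=1}^m f_j$ gives $\ucat(f)\leq\sum_j\ucat(f_j)$ by concatenating decompositions, it suffices to show $\ucat(f_j)<\infty$ for each $j$; and by Theorem \ref{sweeping}, this reduces to $M(f_j)<\infty$. Assume therefore without loss of generality that $\supp f=[a,b]$, with $f$ increasing on $[a,a+\epsilon]$ and decreasing on $[b-\epsilon,b]$, and suppose for contradiction that $M(f)=\infty$. By Corollary \ref{realization}, the sweeping sequence $(x_i)$ from Theorem \ref{sweeping} is infinite and strictly increasing, and the argument in the proof of Theorem \ref{sweeping} yields the identity $V^-(f;[x_{i-1},x_i])=f(x_{i-1})$ for each $i$. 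Monotonicity of $f$ on $[a,a+\epsilon]$ forces $x_1\geq a+\epsilon$, so $(x_i)$ is bounded above by $b$ and converges to some $c\in[a+\epsilon,b]$. Summing the identities and using additivity of $V^-$ over disjoint intervals gives $\sum_{i\geq 1}f(x_{i-1})\leq V(f;\RR)<\infty$, hence $f(x_{i-1})\to 0$; by continuity $f(c)=0$, forcing $c=b$ since $f>0$ on $(a,b)$. But then eventually $x_{i-1}\in[b-\epsilon,b)$, where $f$ is monotone decreasing, so $V^-(f;[x_{i-1},x_i])=f(x_{i-1})-f(x_i)$; equating this with $f(x_{i-1})$ gives $f(x_i)=0$, contradicting $x_i<b$.

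The main obstacle is the componentwise splitting in the forward direction, which rests on the topological fact that the positive-set of a unimodal function is connected; once this is in hand, the forward direction reduces immediately to Corollary \ref{increasing}. The backward direction is then essentially bookkeeping about the sweeping sequence, with the only non-routine step being the use of the two monotonicity zones near the endpoints to block the possibilities $c=a$ and $c=b$ via the identity $V^-(f;[x_{i-1},x_i])=f(x_{i-1})$.
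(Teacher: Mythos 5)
Your proof is correct, and both directions go through, but the backward direction takes a genuinely different route from the paper's.

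For the forward implication, the paper merely asserts it is ``obvious from Theorem~\ref{sweeping} and Corollary~\ref{realization}'' and leaves the reader to assemble the pieces. Your argument spells this out cleanly, and the key observation---that $u_i^{-1}(0,\infty)$ is connected because it is a nested union of contractible superlevel sets, so each unimodal summand lives in a single component of $f^{-1}(0,\infty)$---is an elementary topological point that bypasses the sweeping machinery entirely for the first bullet, and then reduces the second bullet to Corollary~\ref{increasing} applied componentwise. This is, if anything, a tidier justification than what the paper gestures at.

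For the backward implication, the paper gives a \emph{direct quantitative} bound: it picks $\eta>0$ with $f\geq\eta$ on $[a+\epsilon,b-\epsilon]$, chooses $q\in\NN$ with $V^-(f;[a+\epsilon,b-\epsilon])<q\eta$, and then counts disjoint forced-max intervals, showing at most one can meet $a+\epsilon$, at most one can meet $b-\epsilon$, none can lie in the two monotone end zones, and fewer than $q$ can lie in the middle, yielding $M(f)\leq q+2$. You instead argue by \emph{contradiction}, assuming $M(f)=\infty$, invoking Corollary~\ref{realization} to get an infinite strictly increasing sweeping sequence $(x_i)$ satisfying $V^-(f;[x_{i-1},x_i])=f(x_{i-1})$, and then running a limit analysis: additivity of $V^-$ plus bounded variation forces $\sum f(x_{i-1})<\infty$, hence $f(x_{i-1})\to 0$; continuity and positivity on $(a,b)$ force the limit point $c$ of $(x_i)$ to be $b$; and the monotone zone $[b-\epsilon,b]$ then collapses the identity to $f(x_i)=0$, a contradiction. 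Both proofs use the same three ingredients (bounded variation, the two monotone end zones, and positivity on the compact middle), but the paper's is a finite counting bound while yours is an asymptotic argument about the sweeping sequence. Your version has the nice side benefit of exhibiting that when $M(f)=\infty$ the sweeping sequence must accumulate at $b$, but the paper's version is shorter and gives an explicit bound on $\ucat(f)$. One small polish: when you write $\sum_{i\geq 1}f(x_{i-1})\leq V(f;\RR)$, the chain more precisely runs through $V^-(f;\RR)\leq V(f;\RR)$; and it is worth noting explicitly why $x_i<b$ for all $i$ (if some $x_i\geq b$ then $f(x_i)=0$ and $f\equiv0$ beyond $b$, so $x_{i+1}=\infty$, contradicting strict increase). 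Neither of these affects correctness.
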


\begin{proof}
The forward implication is obvious from Theorem \ref{sweeping} and Corollary \ref{realization}. Conversely, suppose the two bullet points are satisfied. We can decompose $f$ uniquely as $f=\sum_{j=1}^m f_j$, where $f_j$ is supported in the interval $[a_j,b_j]$. It suffices to show that each of these functions $f_j$ has a finite unimodal decomposition, so without loss of generality, we can assume that $m=1$ and $f_1=f$. Let $a=a_1$ and $b=b_1$. We can assume that $\epsilon$ is small enough so that the interval $[a+\epsilon,b-\epsilon]$ is nonempty. This interval is also compact, so $f$ achieves a minimum on it, say $f(x)\geq\eta>0$ for all $x\in[a+\epsilon,b-\epsilon]$. Since $f$ is of bounded variation, there is a $q\in\NN$ such that $V^-(f;[a+\epsilon,b-\epsilon])<q\eta$. This allows us to show that $f$ has finite unimodal category. Suppose $(x_1,y_1),\ldots,(x_k,y_k)$ is a set of disjoint intervals. At most one of these intervals can contain the point $a+\epsilon$ and at most one of them can contain the point $b+\epsilon$. Therefore, all the other intervals are contained in one of the intervals $(-\infty,a+\epsilon)$, $(a+\epsilon,b-\epsilon)$, $(b-\epsilon,\infty)$. Now observe that any interval $(x,y)\subseteq(-\infty,a+\epsilon)$ or $(x,y)\subseteq(b-\epsilon,\infty)$ cannot be forced-max (since $f$ is increasing on the first interval and decreasing on the second one). On the other hand, among the aforementioned intervals, there are less than $q$ disjoint forced-max intervals $(x'_l,y'_l)\subseteq(a+\epsilon,b-\epsilon)$, otherwise we would have:
\[
V^-(f;(x,y))\geq\sum_{l=1}^q V^-(f;(x'_l,y'_l))>\sum_{l=1}^q f(x'_l)\geq q\eta,
\]
which is not the case. Therefore $f$ has at most $q+2$ forced-max intervals and the proof is complete.
\end{proof}

\begin{example}
To illustrate the theorems on a function with an infinite set of local maxima, let $C\subseteq[0,1]$ be the usual middle-thirds Cantor set and define
\[
f(x)=\max\{0,\tfrac12-d(x,C)\}.
\]
\begin{center}
\includegraphics[width=340pt]{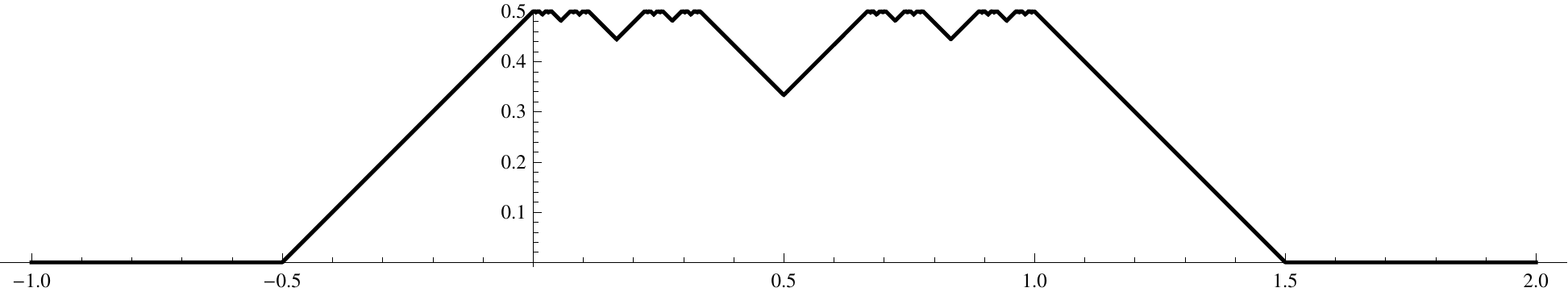}
\end{center}
Observe that $f^{-1}(0,\infty)=(-\frac12,\frac32)$ and that $f$ is increasing on $[-\frac12,0]$ and decreasing on $[1,\frac32]$. This already implies that $f$ has finite unimodal category. In fact, using the explicit construction of Theorem \ref{sweeping}, we can show that $\ucat(f)=2$. To see this, observe that $(-\infty,x)$ is forced-max if and only if $x>0$, so in the construction, we must take $x_1=0$. Next, observe that 
\[
V^-(f;(0,1))=\frac16+\frac2{3\times 6}+\frac4{3^2\times 6}+\ldots=\frac12.
\]
Since $f$ is strictly decreasing on $[1,\frac32]$, for any $x>1$, we have $V^-(f;(0,x))>\frac12$. Therefore, we must take $x_2=1$. Finally, $(1,\infty)$ is not forced-max. Hence, the decomposition obtained by sweeping has precisely two unimodal summands $u_1$ and $u_2$. These can be graphed as follows:
\begin{center}
\includegraphics[width=340pt]{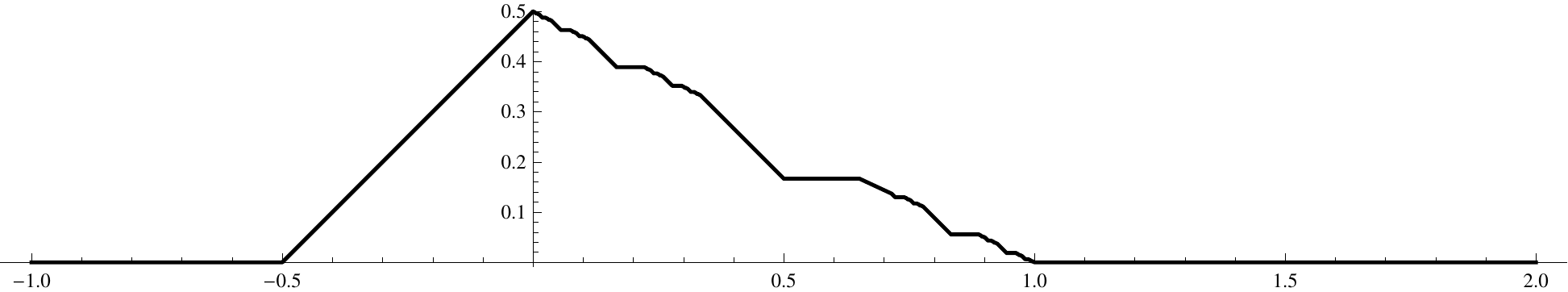}
\end{center}
\begin{center}
\includegraphics[width=340pt]{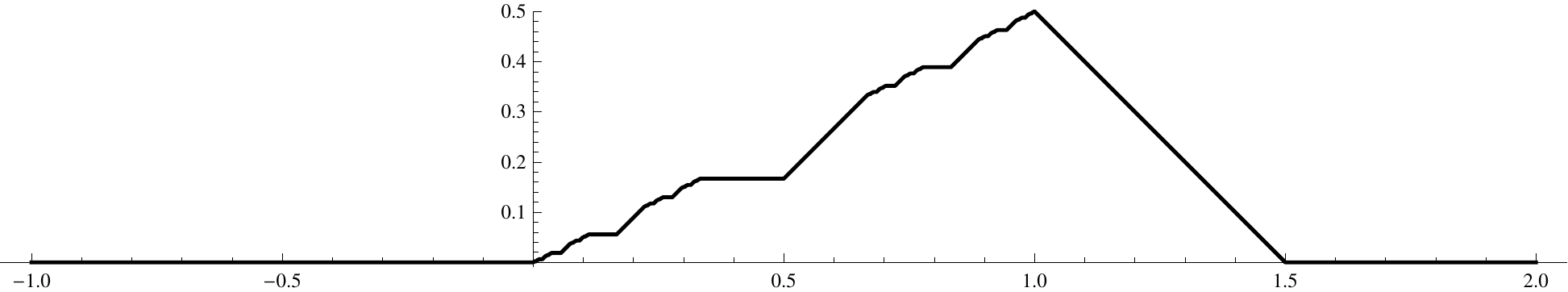}
\end{center}
The construction used in the theorem has a nice geometric interpretation:
\begin{itemize}
\item Plot the positive variation $g$ and the negative variation $h$.
\item Draw a broken line, as follows. For each $i\in\{1,2,\ldots,n\}$, draw a horizontal segment (or infinite ray) from $(x_{i-1},g(x_{i-1}))$ to $(x_i,h(x_i))$  and a vertical segment from $(x_i,h(x_i))$ to $(x_i,g(x_i))$. Conclude with a horizontal ray from $(x_n,h(x_n))$ to $(x_{n+1},g(x_{n+1}))$.
\item This divides the area between $g$ and $h$ into $2n$ pieces. For each $i=1,\ldots,n$, there are two pieces between the $(i-1)$-th and the $i$-th horizontal line (where the line at the bottom is indexed by $0$), and they are divided by a vertical line. Flipping the second piece and translating both pieces downward so that their bottom edge is on the $x$-axis yields a set bounded by two curves: the $x$-axis and the graph of $u_i$.
\end{itemize}
Note that in the second step, the horizontal segment drawn for each $i$ is the longest horizontal segment beginning at the endpoint of the $(i-1)$-th vertical segment that does not cross the graph of $h$. Similarly, each vertical segment is the longest vertical segment beginning at the given point that does not cross the graph of $g$. In the given example, the first two steps are pictured as follows:
\begin{center}
\includegraphics[width=340pt]{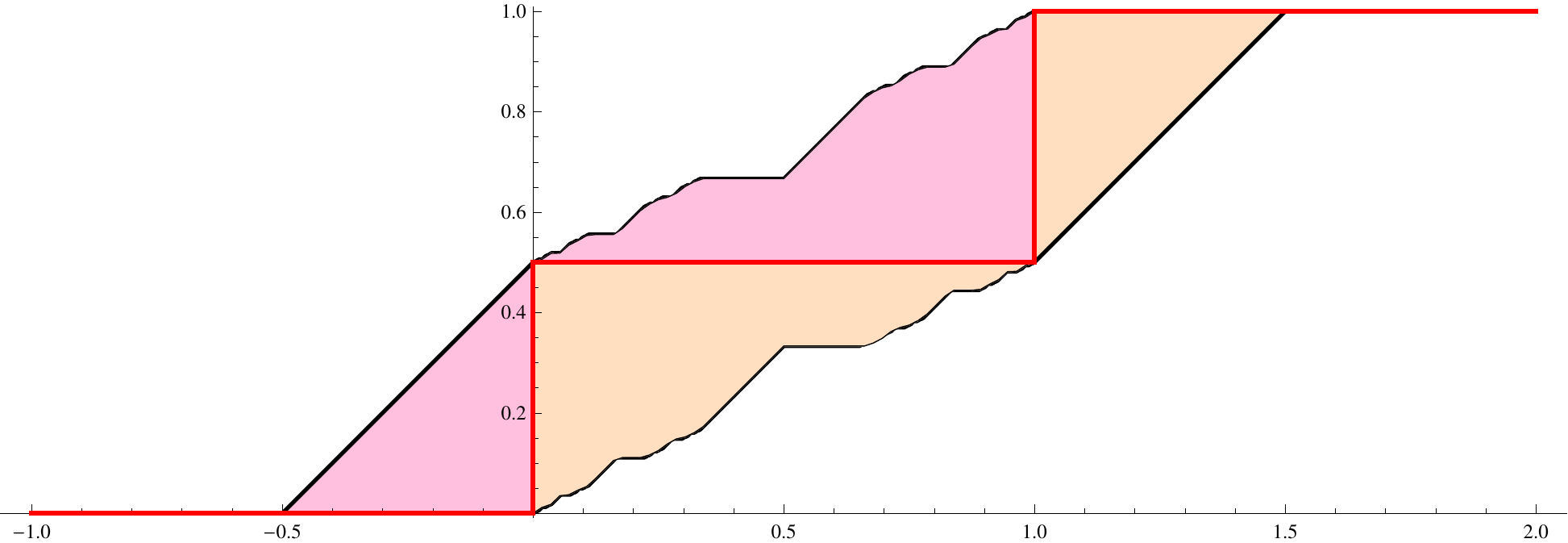}
\end{center}
The third step consists of assembling the first two pieces to obtain $u_1$ and the second two pieces to obtain $u_2$, whose graphs are pictured above. (The orange pieces are flipped in the process.)
\end{example}

Now we turn to functions $f:J\to[0,\infty)$, where $J\subseteq\RR$ is an interval. In this case, we define $M(f)$ as the maximum number of forced-max intervals contained in $J$. Note that again, this agrees with the maximum number of almost forced-max intervals $\widetilde M(f)$. The proof is the same as for Proposition \ref{almost}, but note that the notion of ``almost forced-max'' depends on the ambient, as the enlarged intervals must be contained in $J$. The construction of Theorem \ref{sweeping} can be used to construct a minimal unimodal decomposition\footnote{This actually allows us to compute $\ucat(f)$ of $f:X\to[0,\infty)$ for any set $X\subseteq\RR$: if $f^{-1}(0,\infty)$ has infinitely many components, $\ucat(f)$ is infinite, otherwise $f^{-1}(0,\infty)$ is a finite union of intervals, each of which can be treated separately.} of $f$. Using an appropriate homeomorphism $h$, first map this interval to an interval with endpoints $0$ and $1$ (note that this does not affect unimodality and hence does not change $\ucat$). Now, since $f\circ h^{-1}$ is of bounded variation, it has limits at $0$ and $1$, so we may extend it uniquely to a function $\bar f:[0,1]\to\RR$. Finally, we can extend this function to $\hat f:\RR\to[0,\infty)$ by
\begin{equation}\label{extension}
\hat f(x)=\begin{cases}
(x+1)\bar f(0);& x\in[-1,0],\\
\bar f(x);&x\in[0,1],\\
(2-x)\bar f(1);&x\in[1,2],\\
0;&x\notin[-1,2].\\
\end{cases}
\end{equation}
We claim that using the construction of Theorem \ref{sweeping} on $\hat f$ gives us a minimal unimodal decomposition of $f$:

\begin{proposition}\label{intervals}
Let $J\subseteq\RR$ be an interval and $f:J\to[0,\infty)$ a function of bounded variation. Then
\[
\ucat(f)=\ucat(\hat f),
\]
where $\hat f$ is as defined in Equation \eqref{extension}. A minimal unimodal decomposition of $f$ is obtained by using the construction of Theorem \ref{sweeping} on $\hat f$, restricting the obtained unimodal summands to $h(J)$ and composing with $h$.

Furthermore, we either have $M(\hat f|_{(-\infty,1]})=M(\hat f)-1$ or $M(\hat f|_{(-\infty,1]})=M(\hat f)$. The last summand (i.e. the one whose maximum appears at the largest $x$) in this minimal unimodal decomposition of $f$ is increasing if and only if $M(\hat f|_{(-\infty,1]})=M(\hat f)-1$.
\end{proposition}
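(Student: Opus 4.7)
After composing with $h$ I may assume $J$ has endpoints $0$ and $1$, so $\hat f$ is given by Equation \eqref{extension}. I plan to prove $\ucat(f)=\ucat(\hat f)$ by two inclusions. For $\ucat(\hat f)\leq\ucat(f)$, given any unimodal decomposition $f=\sum_{i=1}^n u_i$, each $u_i$ is unimodal, hence monotone near each endpoint of $h(J)$ and thus admits one-sided limits there; applying the prescription of Equation \eqref{extension} to the continuous closures $\bar u_i$ yields unimodal extensions $\hat u_i$ whose linear segments on $[-1,0]$ and $[1,2]$ continue the monotone tails and whose sum is $\hat f$. For $\ucat(f)\leq\ucat(\hat f)$, the restriction to $h(J)$ of a unimodal function on $\RR$ is unimodal, because its superlevel sets restrict to intervals or to the empty set.

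Next, applying Theorem \ref{sweeping} to $\hat f$ produces a minimal decomposition $\hat f=\sum_{i=1}^n u_i$ with $n=M(\hat f)=\ucat(\hat f)=\ucat(f)$. Restricting each $u_i$ to $h(J)$ and composing with $h$ yields a unimodal decomposition of $f$; none of the restrictions can vanish identically, for otherwise we would obtain $\ucat(f)<n$, so this decomposition is minimal. For the dichotomy $M(\hat f|_{(-\infty,1]})\in\{M(\hat f)-1,M(\hat f)\}$ the upper bound is immediate. For the lower bound, $\hat f$ is non-increasing on $[1,\infty)$, so no forced-max interval of $\hat f$ is contained in $(1,\infty)$, and at most one interval of a disjoint collection can cross the point $1$; removing that at-most-one interval from a maximum collection of size $M(\hat f)$ leaves $\geq M(\hat f)-1$ forced-max intervals inside $(-\infty,1]$.

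The main obstacle is the characterization of when the last summand is (weakly) increasing. Let $x_1<\ldots<x_n$ be the sweeping peaks of $\hat f$. From the sweeping formula $u_n$ is non-decreasing on $(-\infty,x_n]$; the defining condition $V^-(\hat f;(x_{n-1},x_n+\delta))>\hat f(x_{n-1})$ for every $\delta>0$ forces $\hat f$ (and hence $u_n$) to strictly decrease in every right-neighborhood of $x_n$, so $u_n|_{h(J)}$ is (weakly) increasing on $h(J)$ precisely when $x_n\geq 1$. To tie this to the $M$-condition, I use the argument in Theorem \ref{sweeping} that each forced-max interval in a maximum disjoint collection must contain a local max of some summand: by pigeonhole together with the ordering one gets $x_i\in I_i$ for every $i$. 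If $x_n\geq 1$, the open interval $I_n$ contains $x_n$ and therefore cannot lie in $(-\infty,1]$, forcing $M(\hat f|_{(-\infty,1]})\leq n-1$. If $x_n<1$, every sweeping almost forced-max interval $(x_{i-1},x_i)$ already lies in $(-\infty,1)$, and the right-extension argument of Proposition \ref{almost} produces $n$ disjoint forced-max intervals still contained in $(-\infty,1]$, so $M(\hat f|_{(-\infty,1]})=n$. Combined with the dichotomy this yields the desired equivalence.
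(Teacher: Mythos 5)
Your proof is correct and follows essentially the same route as the paper: the same extension/restriction argument for $\ucat(f)=\ucat(\hat f)$, the same reasoning for the dichotomy via the fact that $\hat f$ decreases on $[1,\infty)$, and the same link between $x_n\geq 1$ and the last sweeping summand being increasing, with the "only if" direction just phrased as a contrapositive (via the right-extension argument of Proposition~\ref{almost}) rather than the paper's direct count of intervals whose closures meet $[1,\infty)$. The one place to tighten wording is the pigeonhole step: you should speak of the \emph{mode} (global maximum set) of each summand rather than "a local max", since a unimodal summand may have plateau local maxima scattered across several disjoint intervals; the argument via Proposition~\ref{maxfree} in fact shows each forced-max interval contains the mode set of some summand, which is what makes the bijection and the ordering $x_i\in I_i$ go through.
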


\begin{proof}
Restriction of a unimodal function to an interval is still unimodal, therefore $\ucat(f)\leq\ucat(\hat f)$. Conversely, suppose $u_1,\ldots, u_n$ is a unimodal decomposition of $f$. Then construct $\hat u_i$ from $u_i$ using the same procedure used to construct $\hat f$ from $f$. This yields a unimodal decomposition of $\hat f$, so $\ucat(f)\geq\ucat(\hat f)$. Note that the output of the ``sweeping algorithm'' for $f$ on $[0,1]$ does not depend on left-most interval where the input function $\hat f$ increases, so we can sweep directly over $[0,1]$. (The reader is warned that $M(f)=\ucat(f)$ may no longer be true in this case.)

Next we establish the bounds for $M(\hat f|_{(-\infty,1]})$. It is clear that $M(\hat f|_{(-\infty,1]})\leq M(\hat f)$.

In the other direction, let $(x_1,y_1),\ldots,(x_n,y_n)$ be a sequence of forced-max intervals that realizes $M(\hat f)$, ordered by the size of the endpoints. Observe that $x_n<1$, otherwise $(1,2)$ would be a forced-max interval, which cannot be the case, since $\hat f$ is decreasing there. Therefore $(x_1,y_1),\ldots,(x_{n-1},y_{n-1})$ are forced-max intervals for $\hat f|_{(-\infty,1]}$. This proves that $M(\hat f|_{(-\infty,1]})\geq M(\hat f)-1$.

Now, let $u_1,u_2,\ldots,u_n$ be the decomposition of $\hat f$ obtained by sweeping, where the summands are ordered as in the construction, and suppose $u_n|_{(-\infty,1]}$ is increasing. Then $\ucat(\hat f)=M(\hat f)=n$. We claim that $M(\hat f|_{(-\infty,1]})=n-1$. Suppose that this does not hold. Then there are forced-max intervals $(x_1,y_1),\ldots,(x_n,y_n)$ for $\hat f|_{(-\infty,1]}$. By Proposition \ref{maxfree} this means that for each unimodal decomposition of $\hat f|_{(-\infty,1]}$ and for each $i$, there is a summand that achieves its maximum in $(x_i,y_i)$. This contradicts the fact that $u_n|_{(-\infty,1]}$ is increasing.

Conversely, suppose that $M(\hat f|_{(-\infty,1]}) = n-1$. Define $x_0,x_1,\ldots,x_{n+1}$ as in Theorem \ref{sweeping}. The intervals $(x_0,x_1),\ldots,(x_{n-1},x_n)$ are then almost forced-max with respect to $\hat f$. Since there are $n$ of them, at least one is not almost-forced max with respect to $\hat f|_{(-\infty,1]}$. In fact, this is true for any interval whose closure intersects $[1,\infty)$. Therefore, $(x_{n-1},x_n)$ is not almost-forced max with respect to $\hat f|_{(-\infty,1]}$. Furthermore, there is at most one such interval, since $\ucat(f)=n$. We can conclude that $x_{n-1}<1\leq x_n$. Now, simply recall that $u_n$ is increasing on $(x_{n-1},x_n)$ by construction, and the proof is complete.
\end{proof}

The following property of unimodal decompositions is sometimes useful.

\begin{proposition}\label{modsweep}
Suppose $f:[0,1]\to[0,\infty)$, $2\leq\ucat(f)\leq\infty$ and $u_1,\ldots,u_n$ is the unimodal decomposition obtained by sweeping. Then $u_1(1)=\ldots=u_{n-2}(1)=0$ and there exist unimodal functions $u_{n-1}'$ and $u_n'$ such that $u_1,\ldots,u_{n-2},u_{n-1}',u_n'$ is a unimodal decomposition of $f$ and $u_{n-1}'(1)=0$. Let
\[
a=\max\{x\mid u_{n-1}|_{[0,x]}\text{ is increasing}\}.
\]
It is possible to choose $u_{n-1}'$ so that it only differs from $u_{n-1}$ in the interval $(a,1]$ and if $u_n$ was increasing, $u_n'$ is again increasing.\footnote{Note that $u_n$ is automatically increasing if $u_{n-1}(1)\neq0$.}
\end{proposition}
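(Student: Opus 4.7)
The plan is to separate the two assertions. The first, that $u_i(1)=0$ for $i\leq n-2$, will follow directly from the support structure of the sweeping summands. The second, the existence of a modified pair $u_{n-1}',u_n'$ with $u_{n-1}'(1)=0$, is trivial when $u_{n-1}(1)=0$, and in the only non-trivial case I would construct it by shifting a linearly increasing ``wedge'' of mass of height $u_{n-1}(1)$ from $u_{n-1}$ onto $u_n$, localized to the interval $[a,1]$.

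For the first assertion, recall from Theorem \ref{sweeping} (adapted via Proposition \ref{intervals}) that each summand $u_i$ vanishes outside $[x_{i-1},x_{i+1}]$. I would observe that $x_{n-1}<1$: if instead $x_{n-1}\geq 1$, the interval $(x_{n-1},x_n)$ would have to be (almost) forced-max and lie inside $[1,2]$, but on $[1,2]$ the extension $\hat f$ is monotonically decreasing to $0$, so no such interval exists. Hence $x_{i+1}\leq x_{n-1}<1$ for every $i\leq n-2$, which gives $u_i(1)=0$.

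For the second assertion, unimodality of $u_{n-1}$ together with the definition of $a$ gives that $u_{n-1}$ is non-increasing on $[a,1]$, so $u_{n-1}(x)\geq u_{n-1}(1)=:c$ on this interval. If $c=0$ I set $u_{n-1}'=u_{n-1}$ and $u_n'=u_n$ and there is nothing more to show. If $c>0$, then $1$ lies in the support $[x_{n-2},x_n]$ of $u_{n-1}$, forcing $x_n\geq 1$; hence $u_n$ is non-decreasing on $[x_{n-1},1]$ and zero on $[0,x_{n-1}]$, so $u_n$ is automatically increasing on $[0,1]$, which explains the footnote. I now set $\phi(x)=c(x-a)/(1-a)$ for $x\in[a,1]$, and define
\[
u_{n-1}'(x)=\begin{cases}u_{n-1}(x),&x\in[0,a],\\ u_{n-1}(x)-\phi(x),&x\in[a,1],\end{cases}\qquad u_n'(x)=\begin{cases}u_n(x),&x\in[0,a],\\ u_n(x)+\phi(x),&x\in[a,1].\end{cases}
\]
The remaining checks are routine: $u_{n-1}'+u_n'=u_{n-1}+u_n$, so the total sum remains $f$; $u_{n-1}'(1)=c-c=0$; on $[a,1]$, $u_{n-1}'$ is the difference of a non-increasing and a non-decreasing function and is therefore non-increasing, and non-negativity follows from $u_{n-1}(x)\geq c\geq\phi(x)$, so $u_{n-1}'$ is unimodal and agrees with $u_{n-1}$ off $(a,1]$; finally, on $[a,1]$, $u_n'=u_n+\phi$ is the sum of two non-decreasing functions, so $u_n'$ is non-decreasing on $[0,1]$ whenever $u_n$ was. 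The only mildly delicate step is the non-negativity inequality $u_{n-1}(x)\geq\phi(x)$ on $[a,1]$, which holds precisely because $a$ has been chosen so that $c$ is the minimum of $u_{n-1}$ on $[a,1]$, while $\phi\leq c$ by construction.
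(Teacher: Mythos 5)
Your proof is correct, and the overall structure (first part from the support structure of the sweeping summands, case split on $c:=u_{n-1}(1)$) matches the paper. The one place you genuinely diverge is the construction of the modified pair. The paper defines, on $[a,1]$,
\[
u_{n-1}'(x)=m\,\frac{u_{n-1}(x)-c}{m-c},\qquad
u_n'(x)=u_n(x)+c\,\frac{m-u_{n-1}(x)}{m-c},
\]
where $m=u_{n-1}(a)$; that is, an affine rescaling applied to the \emph{values} of $u_{n-1}$, sending $m\mapsto m$ and $c\mapsto 0$, with the complementary piece absorbed by $u_n$. You instead subtract a linear wedge in the \emph{space} variable, $\phi(x)=c\,(x-a)/(1-a)$. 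Both constructions fix $u_{n-1}$ on $[0,a]$, force $u_{n-1}'(1)=0$, preserve monotonicity on $[a,1]$ (the paper's because a positive affine map preserves it, yours because a non-increasing minus a non-decreasing function is non-increasing), and keep $u_n'$ increasing when $u_n$ was. Your non-negativity check ($u_{n-1}\geq c\geq\phi$ on $[a,1]$) is clean; the paper's requires the slightly less transparent observation that $u_{n-1}'\leq u_{n-1}$ follows from $u_{n-1}\leq m$. Your wedge construction is arguably simpler to verify and more geometric, at the cost of depending on the location $a$ rather than purely on the range of $u_{n-1}$; the paper's version is the one that behaves more intrinsically under reparametrizations of $[a,1]$, though nothing in the proposition requires that. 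Either way the argument is complete and the footnote is correctly explained by your observation that $c>0$ forces $x_n\geq1$.
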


\begin{proof}
The fact that $u_1(1)=\ldots=u_{n-2}(1)=0$ follows directly from the construction in the proof of Theorem \ref{sweeping}. Let $c:=u_{n-1}(1)$ and let $m=u_{n-1}(a)$. If $c=0$, there is nothing to do, so we shall assume that $c>0$. The sweeping construction then implies that $u_n$ is increasing. It also implies that $c<m$, so for $x\in[a,1]$ we may define
\[
u_{n-1}'(x)=m\frac{u_{n-1}(x)-c}{m-c}
\]
and
\[
u_n'(x)=u_n(x)+c\frac{m-u_{n-1}(x)}{m-c}.
\]
Note that $u_{n-1}'(1)=0$, $u_{n-1}'(a)=m$ and since $u_{n-1}\leq m$, we also have $u_{n-1}'\leq u_{n-1}$ and $u_{n-1}'$ is decreasing in $[a,1]$. Therefore, $u_{n-1}'$ is unimodal. The function $u_n'$ is defined as the sum of two increasing functions, so it is also unimodal. This completes the proof.
\end{proof}

\subsection{Circle}

As in the previous section, we are only interested in functions $f:S^1\to[0,\infty)$ that are continuous and of bounded variation, in the sense that the function $[0,1]\to[0,\infty)$ defined by $t\mapsto f(\exp(2\pi i t))$ has bounded variation. Note that this is again a necessary condition for finiteness of $\ucat$. If $f$ has a zero, the problem immediately reduces to the case of $X=\RR$, so we only deal with functions without zeros. Note that this already implies $\ucat(f)\geq\gcat(S^1)=2$. We now introduce some notations.

\begin{notation}
Suppose $a\in S^1$. In the rest of this section, $\phi_a^+:[0,1]\to S^1$ is defined by $\phi_a^+(t)=a\exp(2\pi it)$ and $\phi_a^-:[0,1]\to S^1$ by $\phi_a^-(t)=a\exp(-2\pi it)$. We also define $f_a^+=f\circ\phi_a^+$ and $f_a^-=f\circ\phi_a^-$. If $g:[0,1]\to[0,\infty)$ is any function, its extension $\hat g:\RR\to[0,\infty)$ is defined as in Equation \eqref{extension}. For easier readability we sometimes omit the $+$ sign and e.g. write $f_a$ instead of $f_a^+$.

We are also going to make use of the following numbers associated to $f$:
\begin{align*}
M_a^+(f)&=M(\hat f_a^+|_{(-\infty,1]}),&M^+(f)&=\min_{a\in S^1}M_a^+(f),\\
M_a^-(f)&=M(\hat f_a^-|_{(-\infty,1]}),&M^-(f)&=\min_{a\in S^1}M_a^-(f).
\end{align*}
\end{notation}

We proceed to prove a lemma.

\begin{lemma}\label{prerez}
Suppose $f:S^1\to[0,\infty)$ has no zeros and $\ucat(f)=n\in\NN$. Then it is possible to choose $a\in S^1$ so that $\ucat(f_a)\leq\ucat(f)+1$. Furthermore, $f_a$ admits a unimodal decomposition $u_1,\ldots,u_{n+1}$ where $u_1$ is decreasing and $u_{n+1}$ is increasing.
\end{lemma}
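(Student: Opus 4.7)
Plan: I will build the required decomposition explicitly from any unimodal decomposition of $f$ on $S^1$, cutting at a maximum of one of the summands. The key idea is that choosing $a$ to be such a maximum forces that summand's peak contribution to be absorbed entirely into the two boundary pieces $u_1$ and $u_{n+1}$, saving exactly one summand compared to the naive count of $n+2$.

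Fix any unimodal decomposition $v_1,\ldots,v_n$ of $f$ on $S^1$, let $x_i$ denote a maximum point of $v_i$, take $a=x_1$, and set $w_i=v_i\circ\phi_a^+$ on $[0,1]$, so that $\sum_i w_i=f_a$. I split each $w_i$ as $d_i+P_i+e_i$ with $d_i$ weakly decreasing and $e_i$ weakly increasing nonnegative functions and $P_i$ unimodal on $[0,1]$. For $v_1$: since $a$ is a maximum of the $S^1$-unimodal $v_1$, there exist $0\leq t_1^-\leq t_1^+\leq 1$ so that $w_1$ decreases monotonically from $\max v_1$ to $0$ on $[0,t_1^-]$, vanishes on $[t_1^-,t_1^+]$ (the image under $\phi_a^+$ of the zero arc of $v_1$), and increases monotonically from $0$ back to $\max v_1$ on $[t_1^+,1]$; set $d_1$ to agree with $w_1$ on $[0,t_1^-]$ and to vanish elsewhere, $e_1$ symmetrically at the right end, and $P_1=0$. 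For $i\geq 2$: if $v_i(a)=0$, set $d_i=e_i=0$ and $P_i=w_i$, which is already unimodal on $[0,1]$; if $v_i(a)>0$ and the ray $\phi_a^+$ meets the zero arc of $v_i$ before reaching $x_i$, then $w_i$ begins by decreasing monotonically from $v_i(a)$ to $0$, so take $d_i$ to equal $w_i$ on this initial decreasing stretch (and zero afterwards), $e_i=0$, and $P_i=w_i-d_i$; the opposite case is symmetric at the right end.

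Assemble $u_1=\sum_{i=1}^n d_i$, $u_{n+1}=\sum_{i=1}^n e_i$ and $u_j=P_j$ for $2\leq j\leq n$. A sum of weakly monotone nonnegative functions inherits the monotonicity, so $u_1$ is weakly decreasing and $u_{n+1}$ is weakly increasing, and both are unimodal (their superlevel sets are single intervals touching an endpoint); they are nonzero because $d_1$ and $e_1$ are. By construction $\sum_j u_j=\sum_i(d_i+P_i+e_i)=\sum_i w_i=f_a$, so we obtain $n+1$ summands with the required structure and thus $\ucat(f_a)\leq n+1$.

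The main obstacle is to verify that each $P_i$ ($i\geq 2$) is unimodal on $[0,1]$ in the two nondegenerate subcases; this reduces to checking by direct inspection that the superlevel sets $\{P_i\geq c\}$ are always single closed subintervals of $[0,1]$ — a tail adjacent to one endpoint when $c\leq v_i(a)$, and a compact neighbourhood of the maximum of $w_i$ when $c>v_i(a)$. Finally, in the degenerate situation that several $v_i$'s also attain their maxima at $a$, the above construction yields strictly fewer than $n+1$ summands; in that case one pads $u_1$ by replacing it with two proportional pieces $\alpha u_1$ and $(1-\alpha)u_1$, both decreasing and hence unimodal, which restores the count without altering the first-decreasing, last-increasing structure.
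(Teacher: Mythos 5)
Your proposal is correct and takes essentially the same approach as the paper: cut at a maximum of $v_1$, pull back to $[0,1]$, split each summand at its zero arc, and collect the decreasing initial pieces into $u_1$ and the increasing terminal pieces into $u_{n+1}$. The only cosmetic difference is that the paper bisects each two-component pullback $\tilde v_i=\tilde v_i^{(1)}+\tilde v_i^{(2)}$ and keeps the non-monotone half as $u_i$, which automatically yields $n+1$ nonzero summands, whereas your trisection $d_i+P_i+e_i$ can make $P_i$ vanish when $v_i$ is also maximized at $a$, forcing the (valid) padding trick.
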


\begin{proof}
Let $v_1,\ldots,v_n$ be a minimal unimodal decomposition of $f$ and choose $a\in S^1$ such that $v_1(x)\leq v_1(a)$ holds for all $x\in S^1$. Define functions $\tilde v_i:[0,1]\to[0,\infty)$ by $\tilde v_i=v_i\circ\phi_a$. By unimodality, each ``open support'' $\tilde v_i^{-1}(0,\infty)$ has either one or two components. Reorder the indices $i>1$ and choose $k$ so that the open supports of $\tilde v_1,\ldots,\tilde v_k$ have two components and the open supports of $\tilde v_{k+1},\ldots,\tilde v_n$ have one component.

Each $\tilde v_i$ for $1\leq i\leq k$ can uniquely be split as a sum of two unimodal functions $\tilde v_i=\tilde v_i^{(1)}+\tilde v_i^{(2)}$ so that $0\in\supp\tilde v_i^{(1)}$ and $1\in\supp\tilde v_i^{(2)}$. By unimodality, for each $i$ either $\tilde v_i^{(1)}$ is decreasing or $\tilde v_i^{(2)}$ is increasing and by our choice of $a$ both of these hold for $i=1$. After possibly reordering the indices $i\in\{2,\ldots,k\}$ again, we can choose an $l$ such that $\tilde v_i^{(1)}$ is decreasing for $i=2,...,l$ and $\tilde v_i^{(2)}$ is increasing for $i=l+1,\ldots,k$. (For $i=1$, both of these hold.)

For $i=1,\ldots,n+1$, we may therefore define $u_i:[0,1]\to[0,\infty)$ as follows:
\[
u_i=\begin{cases}\sum_{i=1}^l\tilde v_i^{(1)};&\text{for }i=1,\\
\tilde v_i^{(2)};&\text{for }i=2,\ldots,l,\\
\tilde v_i^{(1)};&\text{for }i=l+1,\ldots,k,\\
\tilde v_i;&\text{for }i=k+1,\ldots,n,\\
\tilde v_1^{(2)}+\sum_{i=l+1}^k\tilde v_i^{(2)};&\text{for }i=n+1.
\end{cases}
\]
The function $u_1$ is decreasing since it is the sum of a sequence of decreasing functions and $u_{n+1}$ is increasing since it is the sum of a sequence of increasing functions. The unimodality of the other functions is clear.
\end{proof}

\begin{proposition}\label{two}
Suppose $f:S^1\to[0,\infty)$ has no zeros and $\ucat(f_a)=2$ for some $a\in S^1$. Then $\ucat(f)=2$.
\end{proposition}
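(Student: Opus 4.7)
My plan to prove the proposition is as follows. The lower bound $\ucat(f) \ge 2$ is immediate: any unimodal function on $S^1$ must vanish somewhere, for otherwise a superlevel set below its minimum would be all of $S^1$ and fail to be contractible. For the upper bound I will modify the given decomposition of $f_a$ on $[0,1]$ to produce a two-summand unimodal decomposition of $f$ on $S^1$.

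By Proposition~\ref{intervals}, sweeping $\hat f_a$ yields a minimal unimodal decomposition $u_1+u_2=f_a$ on $[0,1]$, with peaks $x_1\le x_2$. I first handle the generic case $0<x_1\le x_2<1$. The sweeping formulas then give $u_1(0)=f(a)$, $u_2(0)=0$, $u_1(1)=0$, $u_2(1)=f(a)$, and $u_2\equiv 0$ on $[0,x_1]$. Neither $u_i$ descends to $S^1$ on its own, since its endpoint values disagree, so I transfer a controlled amount of mass from $u_1$ to $u_2$ near $t=0$: define $\eta\colon [0,1]\to[0,f(a)]$ by $\eta(t)=f(a)(1-t/x_1)$ on $[0,x_1]$ and $\eta(t)=0$ on $[x_1,1]$, and set $\tilde u_1=u_1-\eta$ and $\tilde u_2=u_2+\eta$. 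The identities $\tilde u_1(0)=\tilde u_1(1)=0$ and $\tilde u_2(0)=\tilde u_2(1)=f(a)$ then make both functions descend continuously to $S^1$ with sum $f$.

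The nontrivial checks are non-negativity and unimodality. Since $u_1$ is increasing on $[0,x_1]$ from $f(a)$, one has $u_1\ge f(a)\ge\eta$ on $[0,x_1]$, giving $\tilde u_1\ge 0$; unimodality of $\tilde u_1$ on $S^1$ (peak $u_1(x_1)$ at $x_1$, zero at $a$) is then immediate. For $\tilde u_2$, observe that it equals the decreasing ramp $\eta$ on $[0,x_1]$, the rising part of $u_2$ on $[x_1,x_2]$, and the decreasing part of $u_2$ on $[x_2,1]$; hence on $S^1$ it has a single zero (at the image of $x_1$) and a single peak (at $x_2$), and is strictly monotone on each of the two arcs between them. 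A short case analysis on the level $c$, split at $c=f(a)$, shows every superlevel set $\{\tilde u_2\ge c\}$ is a single arc, so $\tilde u_2$ is unimodal on $S^1$.

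The main obstacle will be handling the degenerate cases where the generic assumption fails. The construction breaks if $x_1=0$ (i.e., $\hat f_a$ has its first peak at the cut, which happens precisely when $f$ is decreasing forward from $a$) or if $x_2\ge 1$ (the sweeping's second cut falls into the linear tail beyond $[0,1]$; by Proposition~\ref{intervals} this is equivalent to $u_2$ being increasing). In either case I will repeat the construction with the reverse parametrization $\hat f_a^-$; one of the two directions is always non-degenerate unless $a$ is a local maximum of $f$ on both sides. In that remaining sub-case $a$ itself is one of the two essential peaks of $f$, and I can build the decomposition directly: take $\tilde u_1$ to be a unimodal ``mountain'' centered at $a$ produced by a water-level cut, and verify as in the main case that $\tilde u_2=f-\tilde u_1$ is unimodal as well.
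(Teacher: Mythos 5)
Your generic-case construction is sound and essentially the same in spirit as the paper's (subtract a ramp near the cut point to make the two endpoint values match), but the plan for the degenerate cases has a genuine gap. The claim that ``one of the two directions is always non-degenerate unless $a$ is a local maximum of $f$ on both sides'' is false. Take $f_a$ piecewise linear with $f_a(0)=f_a(1)=1$, $f_a(1/4)=2$, $f_a(3/4)=1/2$; then $\ucat(f_a)=2$ and $a$ is a regular point of $f$ (not a local extremum). For $\hat f_a$ one computes $\hat x_1=1/4>0$ but $\hat x_2=3/2>1$, so you are in your $x_2\ge 1$ degeneracy; for $\hat f_a^-$ the function starts decreasing from $t=0$, so $\hat x_1^-=0$ and you are in your $x_1=0$ degeneracy. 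Both directions are degenerate, $a$ is not a local maximum, and your fallback ``water-level'' construction is not available.

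The paper sidesteps this by first applying Proposition~\ref{modsweep} to replace the sweeping decomposition by one with $u_1(1)=0$. Once that is guaranteed, the ramp subtraction at the cut works whenever $x_1>0$ \emph{regardless of $x_2$}, because the only thing the ramp must fix is the mismatch at $t=0$; in particular $x_2\ge 1$ is no longer an obstruction, which is exactly the case your construction breaks on. The remaining degenerate situation is genuinely just $x_1=0$, which the paper handles by the mirror ramp at $t=1$ when $x_2<1$, and by a separate direct construction (splicing $2u_2$ and $2u_1$ at a crossing point) in the extreme case $x_1=0$, $x_2=1$. To repair your proof, either invoke Proposition~\ref{modsweep} before the ramp construction and then re-examine which degeneracies survive, or do a genuine three-way case split along the lines of the paper rather than relying on reversing the parametrization.
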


\begin{proof}
Suppose $f_a=u_1+u_2$ is the unimodal decomposition obtained by sweeping. Using Proposition \ref{modsweep}, we can modify it so that $u_1(1)=0$. If $u_1$ is increasing on some small interval $[0,\epsilon]$, let
\[
v(x)=\max\{0,(1-\tfrac x{\epsilon})f_a(0)\}
\]
and define functions $\tilde u_1,\tilde u_2:S^1\to[0,\infty)$ by $\tilde u_1\circ\phi_a=u_1-v$ and $\tilde u_1+\tilde u_2=f$. It can be directly verified that these are unimodal. An entirely analogous construction works if $u_2$ is decreasing on some interval $[1-\epsilon,1]$.

Finally, if $u_1$ is decreasing everywhere and $u_2$ is increasing, choose a point $x_0$ such that $u_1(x_0)=u_2(x_0)$ and define
\[
u(x)=\begin{cases}2u_2(x);&x\leq x_0,\\
2u_1(x);&x\geq x_0.
\end{cases}
\]
Define functions $\tilde u_1,\tilde u_2:S^1\to[0,\infty)$ by $\tilde u_1\circ\phi_a=u$ and $\tilde u_1+\tilde u_2=f$. A direct verification shows that these are unimodal.
\end{proof}

This allows us to characterize $\ucat(f)$ in the following way.

\begin{theorem}\label{minindex}
The unimodal category of $f:S^1\to[0,\infty)$ without zeros and of bounded variation is characterized as follows:
\[
\ucat(f)=\max\{2,M^+(f)\}=\max\{2,M^-(f)\}.
\]
\end{theorem}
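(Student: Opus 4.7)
Since reversing the orientation of $S^1$ via $\sigma(e^{i\theta}) = e^{-i\theta}$ preserves $\ucat$ and exchanges $M^+$ with $M^-$ (one checks that $f_a^- = (f\circ\sigma)_{\sigma(a)}^+$, so $M^-(f) = M^+(f\circ\sigma)$), the equality involving $M^-$ follows from the one involving $M^+$ applied to $f\circ\sigma$. I focus on $\ucat(f) = \max\{2, M^+(f)\}$, via separate lower and upper bounds.

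For the lower bound, the estimate $\ucat(f) \geq 2$ is immediate since $f > 0$ on $S^1$ makes $f^{-1}[c,\infty) = S^1$ for small $c > 0$, which is not contractible. For $\ucat(f) \geq M^+(f)$, set $n = \ucat(f)$ and apply Lemma \ref{prerez} to obtain $a \in S^1$ and a unimodal decomposition $u_1,\ldots,u_{n+1}$ of $f_a$ with $u_{n+1}$ increasing on $[0,1]$; the explicit formula $u_{n+1} = \tilde v_1^{(2)} + \sum_{i=l+1}^{k}\tilde v_i^{(2)}$ in the proof of that lemma shows in addition that $u_{n+1}(0) = 0$ (each $\tilde v_i^{(2)}$ is supported in $[\alpha_i,1]$ with $\alpha_i > 0$). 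Setting $\tilde f = f_a - u_{n+1}$ gives a unimodal decomposition $u_1,\ldots,u_n$ of $\tilde f$, so by Proposition \ref{intervals} we have $M(\hat{\tilde f}) = \ucat(\tilde f) \leq n$. Because $u_{n+1}(0) = 0$ and $u_{n+1}$ is non-decreasing on $[0,1]$, the extension $\hat u_{n+1}$ is non-decreasing on $(-\infty,1]$, hence $V^-(\hat u_{n+1};(x,y)) = 0$ for every open $(x,y)\subseteq(-\infty,1]$. Combined with $\hat f_a = \hat{\tilde f} + \hat u_{n+1}$, subadditivity of $V^-$ and the inequality $\hat u_{n+1} \geq 0$ imply that any forced-max interval of $\hat f_a$ contained in $(-\infty,1]$ is also forced-max for $\hat{\tilde f}$, yielding $M_a^+(f) \leq M(\hat{\tilde f}) \leq n$, and minimizing over $a$ gives $M^+(f) \leq \ucat(f)$.

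For the upper bound, set $N = M^+(f)$ and pick $a^* \in S^1$ realizing the minimum. By Proposition \ref{intervals}, $\ucat(f_{a^*}) \in \{N, N+1\}$, the latter case characterized by the last sweeping summand of $\hat f_{a^*}$ being increasing on $[0,1]$. The cases $N \leq 1$ are handled separately: $N = 0$ forces $\hat f_{a^*}$ to be non-decreasing on $(-\infty,1]$, which (together with $f_{a^*}(0) = f_{a^*}(1)$) makes $f$ constant, admitting a trivial decomposition into two antipodal half-bumps; $N = 1$ reduces to Proposition \ref{two} when $\ucat(f_{a^*}) = 2$, and when $\ucat(f_{a^*}) = 1$ the function $f_{a^*}$ is unimodal with interior peak and $a^*$ is a global minimum of $f$, so an explicit two-summand construction analogous to the case-(c) formula of Proposition \ref{two} works. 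For $N \geq 2$, the plan is to generalize case (c) of Proposition \ref{two}: apply sweeping to $\hat f_{a^*}$ to get $u_1,\ldots,u_n$ with $n \in \{N, N+1\}$, modify using Proposition \ref{modsweep} (and its left-endpoint analogue) to arrange that the intermediate summands vanish at both endpoints of $[0,1]$, $u_1$ is decreasing on $[0,1]$, and $u_n$ is increasing on $[0,1]$; choose an equalizer point $x_0$ with $u_1(x_0) = u_n(x_0)$ and define $w(x) = 2u_n(x)$ for $x \leq x_0$, $w(x) = 2u_1(x)$ for $x \geq x_0$, which is unimodal on $[0,1]$ with $w(0) = w(1) = 0$ and lifts to an $S^1$-unimodal summand $\tilde w$ peaked at $\phi_{a^*}(x_0)$. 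The ``correction'' $c = u_1 + u_n - w$ is unimodal on $S^1$ with peak at $a^*$, and together with the direct lifts of $u_2,\ldots,u_{n-1}$ this produces $N$ unimodal summands on $S^1$.

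The main obstacle is the $N \geq 2$ upper bound: sweeping alone does not yield the structural properties required for the case-(c) gluing, and Proposition \ref{modsweep} only handles the right endpoint. What is really needed is a partial converse of Lemma \ref{prerez} — any unimodal decomposition of $f_a$ of size $m$ with $u_1$ decreasing, $u_m$ increasing, $u_1(1) = u_m(0) = 0$, and intermediate summands vanishing at $\{0,1\}$ lifts to an $S^1$-unimodal decomposition of $f$ of size $m - 1$ — together with the claim that sweeping plus iterated Proposition \ref{modsweep}-type modifications (or a judicious choice of $a^*$, e.g.\ where $f$ is decreasing counterclockwise and increasing clockwise so that the extremal sweeping points of $\hat f_{a^*}$ sit at $x = 0$ and $x > 1$) always produces such a structured decomposition of size $m = N + 1$. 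Combining this converse with the sweeping construction yields $\ucat(f) \leq N$, completing the proof.
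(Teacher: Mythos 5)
Your lower bound is correct and, if anything, more carefully argued than the paper's: you make explicit the observation that $\hat f_a = \hat{\tilde f} + \hat u_{n+1}$ on $(-\infty,1]$ with $\hat u_{n+1}$ non-decreasing there, so subadditivity of $V^-$ transfers forced-max intervals of $\hat f_a|_{(-\infty,1]}$ to $\hat{\tilde f}|_{(-\infty,1]}$. The reflection argument for $M^+$ versus $M^-$ matches the paper's.

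The upper bound, however, has a genuine gap, and you diagnose it yourself. The ``case-(c) gluing'' needs a decomposition of $f_{a^*}$ in which $u_1$ is decreasing on $[0,1]$, $u_n$ is increasing, and the intermediate summands vanish at both endpoints. Proposition \ref{modsweep} only makes $u_1(1)=\cdots=u_{n-2}(1)=0$ and gives $u_{n-1}'(1)=0$; it says nothing about the left endpoint, and there is no reason the sweeping summand $u_1$ should be decreasing from $0$. In fact it cannot be arranged in general: $u_1$ decreasing on $[0,1]$ would force $\hat f_{a^*}$ to be decreasing just to the right of $0$, i.e.\ $a^*$ to be a local maximum of $f$, which is not what choosing $a^*$ as the minimizer of $M_a^+$ gives you. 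Your hoped-for ``left-endpoint analogue'' is the same wish in disguise. So the construction does not go through without a new idea, exactly as you flag in your final paragraph.

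The paper sidesteps this entirely by arguing the upper bound in contrapositive form, for \emph{all} $a$ rather than a preferred $a^*$. Fix $a$ with $M_a^+(f)<n=\ucat(f)$ and derive a contradiction. After Proposition \ref{modsweep} the summands $u_1,\ldots,u_{k-1}$ of $\ucat(f_a)$ vanish at $1$, so for $k>2$ the open supports of $u_1$ and $u_k$ are disjoint; this makes $\tilde u_1\circ\phi_a := u_1 + u_k$ well defined on $S^1$ and yields $\tilde u_2,\ldots,\tilde u_{k-1}$ by direct lifting. The crucial step is that $\tilde u_1$ need not be unimodal: the paper observes that its superlevel sets have at most two contractible components, hence $\ucat(\tilde u_1)\leq 2$, so $\ucat(f)\leq k$, still a contradiction. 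In the case $M_a^+(f)=\ucat(f_a)-1$, the sweeping output has $u_k$ increasing, which \emph{does} make $u_1+u_k$ unimodal, giving $\ucat(f)\leq k-1$. No monotonicity of $u_1$ and no left-endpoint control on the intermediate summands are required. That is the idea your sketch is missing; without it (or some replacement) the $N\geq 2$ case of your upper bound is incomplete.
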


\begin{proof}
Clearly, $\ucat(f)\geq\gcat(S^1)=2$, since $f$ has no zeros. Notice that is suffices to prove the first equality. The second equality then follows by a simple observation that composing $f$ with a reflection of the circle does not affect the unimodal category, since a reflection is a homeomorphism.

Next, we show that $M^+(f)\leq\ucat(f)=:n$. By Lemma \ref{prerez}, there is a point $a\in S^1$ and a unimodal decomposition $f_a=\sum_{i=1}^{n+1}u_i$ such that $u_{n+1}$ is increasing. By Proposition \ref{intervals} this means that $M_a^+(f)\leq n$, which proves $M^+(f)\leq\ucat(f)$.

Note that this already completes the proof in the case $\ucat(f)=2$. We may therefore assume that $n=\ucat(f)\geq 3$, in which case it remains to establish that $\ucat(f)\leq M^+(f)$. By the definition of $M^+(f)$, it suffices to show that $\ucat(f)\leq M_a^+(f)$ holds for all $a\in S^1$. So let $a\in S^1$. By Proposition \ref{intervals}, $M_a^+(f)$ is either $\ucat(f_a)$ or $\ucat(f_a)-1$.

{\bf Case 1:} $M_a^+(f)=\ucat(f_a)$. In this case, it is sufficient to see that $\ucat(f_a)\geq n$. Suppose that this does not hold. Then the sweeping algorithm returns a unimodal decomposition of $f_a$ with $k<n$ summands and by Proposition \ref{modsweep} we may modify the last two so that $u_i(1)=0$ for $i<k$, which also implies $u_k(1)=u_1(0)$. If $k=2$, Proposition \ref{two} yields a contradiction. Otherwise note that for $i=2,\ldots,k-1$, we have $u_i^{-1}(0,\infty)\subseteq(0,1)$, so these summands may be transported to $S^1$ by defining $\tilde u_i\circ\phi_a=u_i$. Since $k>2$, the sets $u_1^{-1}(0,\infty)$ and $u_k^{-1}(0,\infty)$ are disjoint. The summands $u_1$ and $u_k$ may be glued together, i.e. we may define $g:S^1\to[0,\infty)$ by $g\circ\phi_a=u_1+u_k$. If $g$ is unimodal, we have $\ucat(f)\leq k-1$, a contradiction. If not, there exist $B>A>0$ such that for $x>B$, $g^{-1}[x,\infty)$ is empty, for $x\in(A,B]$ it has two contractible components and for $x\in(0,A]$ it has one contractible component. This implies that $\ucat(g)=2$ and therefore $\ucat(f)\leq k$, which is again a contradiction.

{\bf Case 2:} $M_a^+(f)=\ucat(f_a)-1$. By Proposition \ref{intervals}, this means that $f_a$ has a unimodal decomposition into $k=\ucat(f_a)$ summands $u_1,\ldots,u_k$ such that $u_k$ is increasing. It is sufficient to prove that $k\geq n+1$. Suppose not: then $k<n+1$. If $k=2$, we have $\ucat(f)=2$ by Proposition \ref{two}, so we may assume that $k\geq 3$. By Proposition \ref{modsweep}, we may again modify the last two summands so that $u_{k-1}(1)=0$. This allows us to use the same construction as in Case 1: define $\tilde u_i$ by $\tilde u_i\circ\phi_a=u_i$ for $i=2,\ldots,k-1$ and $\tilde u_1\circ\phi_a=u_1+u_k$. Since $u_k$ is increasing, $\tilde u_1$ is unimodal by definition. But this implies that $\ucat(f)\leq k-1<n$, yet another contradiction.
\end{proof}

\begin{remark}
As a consequence of this proof we can see that $M_a^+(f)$ can be computed in a greedy manner as in Theorem \ref{sweeping}, i.e. by recursively defining
\begin{align*}
x_0&=-\infty,\\
x_i&=\inf\{x\mid V^-(\hat f_a;(x_{i-1},x))>\hat f_a(x_{i-1})\},\qquad i\in\NN,
\end{align*}
however, in this case, we must stop once $x_i$ exceeds $1$.
\end{remark}

\subsubsection{Algorithm in the Case of Finitely Many Critical Points}

One possible interpretation of these results is that $\ucat(f)$ of $f:S^1\to[0,\infty)$ can be computed by sweeping if we know where to start. In general, it is not entirely clear how to find the starting point. However, if the function only has finitely many critical points, it suffices to check these critical points. Therefore the unimodal category of any such function can be computed in a completely algorithmic manner. This is justified by the following result which shows that $M_a^+(f)$ achieves its minimum at a critical point. An explicit description of the algorithm is available in Appendix \ref{algorithms}.

For convenience we introduce some further notation. If $a,a'\in S^1$ let $[a,a']$ denote the closed arc between $a$ and $a'$, i.e. the set of points obtained by starting at $a$ and traversing the circle in the positive direction until reaching $a'$. Let $(a,a'),(a,a']$ and $[a,a')$ denote the corresponding open and half-open arcs.

\begin{proposition}
Suppose $f:S^1\to[0,\infty)$ is a function and $a_1,a_2\in S^1$ are points such that $(a_1,a_2)$ does not contain a critical point of $f$. Then
\[
M_{a_1}^+(f)\leq M_a^+(f)
\]
for all $a\in(a_1,a_2)$.
\end{proposition}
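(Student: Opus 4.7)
The plan is to run the greedy sweeping algorithm from the Remark following Theorem~\ref{minindex} in parallel for $\hat{f}_{a_1}$ and $\hat{f}_a$, producing sequences $(x_i^{a_1})_i$ and $(x_i^{a})_i$, and to prove $x_i^{a}\leq x_i^{a_1}$ for every $i\geq 1$. Once this is in place, the conclusion is immediate: if $M_{a_1}^+(f)=k$ then $x_k^{a_1}\leq 1$, forcing $x_k^{a}\leq 1$, hence $M_a^+(f)\geq k$.

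Let $s\in(0,1)$ and $\ell\in(0,s)$ satisfy $a_2=a_1e^{2\pi is}$ and $a=a_1e^{2\pi i\ell}$. The geometric key is the shift identity $f_a(t)=f_{a_1}(t+\ell)$ for $t\in[0,1-\ell]$, yielding $\hat{f}_a(y)=\hat{f}_{a_1}(y+\ell)$ and $V^-(\hat{f}_a;(y,x))=V^-(\hat{f}_{a_1};(y+\ell,x+\ell))$ whenever $0\leq y\leq x\leq 1-\ell$. Because $(a_1,a_2)$ contains no critical points, $f_{a_1}$ is strictly monotone on $[0,s]$, so I would split into two cases.

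If $f_{a_1}$ is non-decreasing on $[0,s]$ (Case B), then $\hat{f}_{a_1}$ is non-decreasing on $[-1,x_1^{a_1}]$; by the shift identity $\hat{f}_a$ is non-decreasing on $[-1,x_1^{a_1}-\ell]$, giving $x_1^{a}=x_1^{a_1}-\ell$ directly. If $f_{a_1}$ is non-increasing on $[0,s]$ (Case A), then both $\hat{f}_{a_1}$ and $\hat{f}_a$ have a peak at $0$ (using that $[a,a_2]\subseteq[a_1,a_2]$ is still non-increasing), so $x_1^{a_1}=x_1^{a}=0$; the shift activates at $i=2$ because $V^-(\hat{f}_{a_1};(0,\ell))=f(a_1)-f(a)$ exactly compensates for the threshold drop, and the condition $V^-(\hat{f}_a;(0,x))>f(a)$ rewrites as $V^-(\hat{f}_{a_1};(0,x+\ell))>f(a_1)$, yielding $x_2^{a}=x_2^{a_1}-\ell$.

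The inductive step is uniform: once $x_{i-1}^{a}+\ell=x_{i-1}^{a_1}\in[\ell,1]$, the shift identity gives $\hat{f}_a(x_{i-1}^{a})=\hat{f}_{a_1}(x_{i-1}^{a_1})$ and $V^-(\hat{f}_a;(x_{i-1}^{a},x))=V^-(\hat{f}_{a_1};(x_{i-1}^{a_1},x+\ell))$, so the defining infimum for $x_i^{a}$ transforms into the one for $x_i^{a_1}$, giving $x_i^{a}=x_i^{a_1}-\ell$ whenever $x_i^{a_1}\leq 1$. The main technical obstacle is the Case~A base step: although $x_1^{a}$ and $x_1^{a_1}$ coincide rather than shift, the exact cancellation between the monotone contribution $V^-(\hat{f}_{a_1};(0,\ell))=f(a_1)-f(a)$ and the threshold difference $f(a_1)-f(a)$ is what lets the shift take over cleanly from $i=2$ onward, and tracking this compensation is the heart of the argument.
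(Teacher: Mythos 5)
Your argument is correct and rests on the same central idea as the paper's: shift by $\ell=t-t_1$ and split into cases according to whether $f$ increases or decreases on the arc from $a_1$ toward $a_2$. Where the paper picks an arbitrary maximal family of disjoint forced-max intervals for $\hat f_{a_1}|_{(-\infty,1]}$ and handles the decreasing case with a small $\epsilon$-perturbation of the first interval, you instead track the greedy breakpoints from the sweeping algorithm and replace the $\epsilon$-trick by the exact cancellation $V^-(\hat f_{a_1};(0,\ell))=f(a_1)-f(a)$; these are equivalent in substance, and your bookkeeping (establishing $x_i^a=x_i^{a_1}-\ell$ once the shift ``activates'') is a slightly sharper statement than the paper needs, but the proof route is the same.
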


\begin{proof}
Let $0\leq t_1<t<t_2\leq 1$, $a_1=\exp(2\pi i t_1)$, $a_2=\exp(2\pi i t_2)$ and $a=\exp(2\pi i t)$. Also let $t'=t-t_1$. Suppose $M_{a_1}^+(f)=n$ and let $(-\infty,x_1),(x_1,x_2),\ldots,(x_{n-1},x_n)$ be a collection of forced-max intervals for $\hat f_{a_1}|_{(-\infty,1]}$. There are now two cases to treat.

{\bf Case 1:} $\hat f_{a_1}$ is increasing on $[0,t']$. In this case, we have $x_1\geq t'$ and the intervals $(-\infty,x_1-t'),(x_1-t',x_2-t'),\ldots,(x_{n-1}-t',x_n-t')$ is obviously a collection of forced-max intervals for $\hat f_a|_{(-\infty,1]}$, so $M_a^+(f)\geq n$.

{\bf Case 2:} $\hat f_{a_1}$ is decreasing on $[0,t']$. Without loss of generality, we can assume $\hat f_{a_1}$ is non-constant on $[0,t']$, otherwise the first case applies. We also have $t'\leq x_2$. Without loss of generality we can further assume that $x_1\leq t'$. (If this is not the case, we can replace $x_1$ by $x:=\min\{x_1,t'\}$. Upon doing this, $(-\infty,x)$ is still forced-max, because $\hat f_{a_1}$ is decreasing on $[0,t']$, and $(x,x_2)$ is forced-max because forced-max intervals form an ideal.) There is an $\epsilon>0$ such that $\hat f_{a_1}$ is still decreasing on $[0,t'+\epsilon]$. The intervals $(-\infty,\epsilon),(\epsilon,x_2-t'),(x_2-t',x_3-t'),\ldots,(x_{n-1}-t',x_n-t')$ are a collection of forced-max intervals for $\hat f_a|_{(-\infty,1]}$, so $M_a^+(f)\geq n$. For $(\epsilon,x_2-t')$, this is true because the interval $(t'+\epsilon,x_2)$ is still forced-max for $\hat f_{a_1}|_{(-\infty,1]}$ since it is decreasing on $[x_1,t'+\epsilon]$ and therefore
\[
V^-(\hat f_a;(\epsilon,x_2-t'))=V^-(\hat f_{a_1};(t'+\epsilon,x_2))>\hat f_{a_1}(t'+\epsilon)=\hat f_a(\epsilon).
\]
For all the other intervals, this is obvious.
\end{proof}

\begin{example}
Consider the function $f:S^1\to[0,\infty)$ obtained by choosing eight points on the circle, for instance $a_j=\exp(2\pi i t_j)\in S^1$, $j=0,1,\ldots,7$, where $0=t_0<t_1<\ldots<t_7<1$, taking the values there to be $4,3,\frac72,3,4,1,3,1$ respectively, and interpolating linearly in between. Slicing the circle at $a=0$, we obtain the following graph:
\begin{center}
\includegraphics[width=340pt]{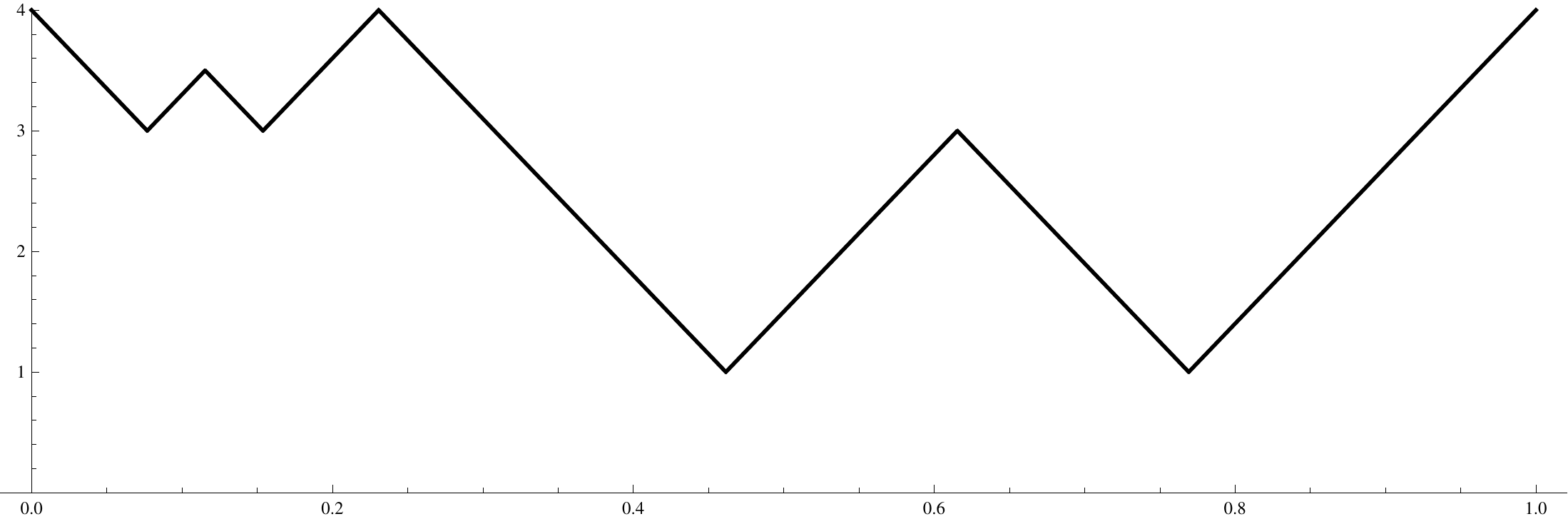}
\end{center}
By the above proposition, the calculation of $M_{a_i}^+(f)$ for $i=0,1,\ldots,7$ suffices to determine $\ucat(f)$. However, it is illustrative to calculate $M_a^+(f)$ for all $a\in S^1$. By a direct calculation, we obtain:
\[
M_a^+(f)=\begin{cases}2;&a\in[a_1,a_2]\cup[a_5,a_6],\\
3;&a\in(a_6,a_1)\cup(a_2,a_5).
\end{cases}
\]
We conclude that $\ucat(f)=2$. An explicit unimodal decomposition can also be described. The first summand is obtained by mapping the points $a_j$, $j=0,1,\ldots,7$, to the sequence $3,3,\frac72,3,3,0,0,0$, and interpolating linearly. The second summand is obtained by mapping the points $a_j$, $j=0,1,\ldots,7$, to the sequence $1,0,0,0,1,1,3,1$ and interpolating linearly.
\end{example}

\section{The Monotonicity Conjecture}\label{monotonicity}

Baryshnikov and Ghrist conclude their paper with the following conjecture \cite[Conjecture 18]{baryshnikov}, which they believe could play an important role in obtaining various bounds for $\ucat^p$.

\begin{conjecture}
Suppose $f:X\to[0,\infty)$ and $0<p_1<p_2\leq\infty$. Then $\ucat^{p_1}(f)\leq\ucat^{p_2}(f)$.
\end{conjecture}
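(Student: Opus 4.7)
My strategy is bifurcated: prove the conjecture for $X = \RR$ and $X = S^1$, then construct explicit counterexamples for suitable graphs and for $X = \RR^2$. The unifying first step is a reduction based on the observation that $\ucat^p(f) = \ucat(f^p)$ for all $p \in (0,\infty)$. Indeed, $u$ is unimodal iff $u^p$ is unimodal (the superlevel sets only get reparametrized), so $f = (\sum_i u_i^p)^{1/p}$ is a $p$-decomposition iff $f^p = \sum_i u_i^p$ is a standard decomposition. Setting $g = f^{p_1}$ and $\beta = p_2/p_1 > 1$, the conjecture for $0 < p_1 < p_2 < \infty$ becomes $\ucat(g) \leq \ucat(g^\beta)$; the case $p_2 = \infty$ requires an auxiliary limit relation $\lim_{p \to \infty} \ucat^p(f) = \ucat^\infty(f)$ combined with the invariance $\ucat^\infty(g) = \ucat^\infty(g^\beta)$.

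For $X = \RR$, combining this reduction with Theorem \ref{sweeping} turns the task into a combinatorial inequality: $M(g^\beta) \geq M(g)$ for $\beta \geq 1$. I would prove the pointwise claim that every forced-max interval $(x,y)$ for $g$ is forced-max for $g^\beta$. The cleanest route is a down-crossing integral representation
\[
V^-(g;(x,y)) = \int_0^\infty D(t)\, dt,
\]
where $D(t)$ counts descending crossings of level $t$ by $g$ in $(x,y)$. The substitution $s = t^\beta$ yields $V^-(g^\beta;(x,y)) = \int_0^\infty D(t)\,\beta t^{\beta - 1}\, dt$, and the two forced-max conditions become $\int \phi(t)\, dt > 0$ and $\int \phi(t)\, \beta t^{\beta - 1}\, dt > 0$, where $\phi(t) := D(t) - \mathbf{1}_{[0, g(x))}(t)$. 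The crucial structural fact is that $\phi$ has a single sign change: $\phi \equiv -1$ on $[0, g_{\min})$ with $g_{\min} = \inf_{(x,y)} g$, and $\phi \geq 0$ on $[g_{\min}, \infty)$ (any level in that range is descended through at least once, since $g$ starts at $g(x)$ and reaches $g_{\min}$). Bounding the weight below by $\beta g_{\min}^{\beta - 1}$ on the positive part yields the quantitative estimate
\[
V^-(g^\beta;(x,y)) - g(x)^\beta \geq (\beta - 1)\, g_{\min}^\beta + \beta\, g_{\min}^{\beta - 1} \bigl( V^-(g;(x,y)) - g(x) \bigr),
\]
which is strictly positive under the forced-max hypothesis for any $\beta \geq 1$. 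The $S^1$ case then follows by applying the same pointwise inequality to $\hat f_a$ for each $a \in S^1$ and invoking Theorem \ref{minindex}.

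For the counterexamples, the strategy is to exploit topological freedom unavailable on intervals and circles. On a graph such as a tree with several short branches meeting at an interior vertex, peaks placed on distinct arms can merge into a single unimodal function under pointwise maximum, because their joint superlevel sets remain contractible inside the wedge, whereas under $\ell^1$-addition they are forced apart by a branch-point analogue of the forced-max obstruction. For $X = \RR^2$ the construction is analogous in spirit but requires finer engineering: the function must have a non-tree Morse-Smale graph (the Hickok-Villatoro-Wang theorem confirms monotonicity in the tree case), and its local maxima and saddles must be arranged so that pairwise maxima preserve contractibility while no $\ell^1$-decomposition of comparable count exists.

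The hardest steps, I expect, are two. First, making the down-crossing representation of $V^-$ rigorous for general bounded-variation functions requires defining $D(t)$ measure-theoretically rather than via finite partitions, and verifying the single-sign-change structure of $\phi$ in that generality. Second, for the planar counterexample, the upper bound via an $\ell^\infty$-decomposition is routine, but ruling out all potential $\ell^1$-decompositions of lower count demands a planar obstruction analogue of forced-max intervals that is much less canonical than on the line, so the lower bound for $\ucat^1$ is likely to be the principal technical difficulty.
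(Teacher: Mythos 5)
Your bifurcation into (i) a positive result for $\RR$ and $S^1$ and (ii) counterexamples on graphs and the plane is exactly the structure of the paper's resolution of the conjecture, and the reduction via $\ucat^p(f)=\ucat(f^p)$ is the same first move. For the $\RR$ case, however, your route is genuinely different from the paper's. The paper stays discrete: it works with a partition $x=a_0\leq\ldots\leq a_n=y$, pads it into an up-down sequence, and invokes Karamata's majorization inequality (in the two-term concave form $(x-y+z)^q+y^q\leq x^q+z^q$) together with an induction to show that forced-max intervals are preserved under $g\mapsto g^\beta$. Your Banach-indicatrix approach replaces that with the integral identity $V^-(g;(x,y))=\int_0^\infty D(t)\,dt$ and a change of variables, reducing the problem to a single-sign-change comparison of weighted integrals. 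This is an attractive and more conceptual alternative, but two points need attention. First, the descent-counting formula for $V^-$ of an arbitrary BV function is not off-the-shelf and must be established (you flag this). Second, your quantitative bound
\[
V^-(g^\beta;(x,y)) - g(x)^\beta \geq (\beta-1)\,g_{\min}^\beta + \beta\,g_{\min}^{\beta-1}\bigl(V^-(g;(x,y))-g(x)\bigr)
\]
collapses to the non-strict inequality $\geq 0$ when $g_{\min}=0$, which is the generic situation for a forced-max interval that dips to zero. That case must be handled separately: there $\phi\geq 0$ on all of $[0,\infty)$, so $\int\phi>0$ gives a positive-measure set on which $\phi\geq 1$, and strict positivity of $\int\phi\,\beta t^{\beta-1}\,dt$ follows since the set cannot be concentrated at $t=0$; but as written your chain of inequalities does not close. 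Your treatment of $p_2=\infty$ via a limit relation $\lim_{p\to\infty}\ucat^p=\ucat^\infty$ is also more speculative than the paper's, which cites direct comparison lemmas from Baryshnikov--Ghrist.

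On the counterexamples, your sketch is both too thin and, in the graph case, pointed in the wrong direction. You propose ``a tree with several short branches meeting at an interior vertex,'' but on a tree the pointwise maximum of two unimodal bumps on distinct arms with comparable peak heights has a disconnected top superlevel set, so it is not itself unimodal; more to the point, monotonicity is exactly what one \emph{expects} to hold on trees, by analogy with $\RR$ and with the Hickok--Villatoro--Wang result you yourself cite. The paper's counterexamples on graphs and on $\RR^2$ all hinge on a \emph{cycle}: a point of valence four sitting on a cycle (first graph example), the $1$-skeleton of a bipyramid (second graph example), and planar functions whose superlevel sets contain nontrivial $1$-cycles. The cycle is what lets one force specific function values on a unimodal summand (two paths around the cycle must meet, yet a sum constraint at the meeting point is violated). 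Without that mechanism the lower bound on $\ucat^1$ does not come out, which is precisely the ``principal technical difficulty'' you flag but do not resolve.
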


In other words, they conjecture that $\ucat^p$ is monotone in $p$. The aim of this section is to investigate this conjecture for various spaces and functions. We show that the conjecture is true for $X=\RR$ and $X=S^1$. However, there are counterexamples if $X$ is a certain type of graph and if $X=\RR^2$ is the Euclidean plane.

\subsection{Proof for the Real Line and the Circle}\label{real_monotonicity}

In the case of $X=\RR$, the conjecture is true.
\begin{definition}
Let $k\in\NN_0$. We call a sequence $a=(a_0,a_1,\ldots,a_{2k})$ such that $a_0\leq a_1\geq a_2\leq\ldots\geq a_{2k}$ an {\bf up-down} sequence of length $2k$. For such a sequence, define its {\bf negative variation} as follows:
\[
V^-(a)=\sum_{i=1}^{k}(a_{2i-1}-a_{2i}).
\]
Note that this is a sum of non-negative numbers. We also define its $p$-th power for $p>0$ as $a^p=(a_0^p,a_1^p,\ldots,a_{2k}^p)$. Note that this is again an up-down sequence.
\end{definition}

Now, recall the following inequality of Karamata:
\begin{theorem}[\cite{kadelburg}]\label{karamata}
Suppose we are given two finite sequences $a=(a_1,\ldots,a_n)$ and $b=(b_1,\ldots,b_n)$ with terms in $(\alpha,\beta)$ satisfying the following conditions\footnote{If these conditions are satisfied, we say that $a$ majorizes $b$.}:
\begin{itemize}
\item $a_1\geq a_2\geq\ldots\geq a_n$ and $b_1\geq b_2\geq\ldots\geq b_n$,
\item $a_1+a_2+\ldots+a_k\geq b_1+b_2+\ldots+b_k$ for $1\leq k\leq n-1$,
\item $a_1+a_2+\ldots+a_n=b_1+b_2+\ldots+b_n$.
\end{itemize}
Further suppose $\phi:(\alpha,\beta)\to\RR$ is a convex function. Then we have
\[
\phi(a_1)+\phi(a_2)+\ldots+\phi(a_n)\geq\phi(b_1)+\phi(b_2)+\ldots+\phi(b_n).
\]
\end{theorem}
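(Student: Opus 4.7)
The plan is to reduce Karamata's inequality to a telescoping/Abel-summation identity, where convexity of $\phi$ enters through the monotonicity of chord slopes and the majorization hypothesis enters through nonnegativity of partial sums. Concretely, I would set $d_i = a_i - b_i$, $A_k = d_1 + \cdots + d_k$, and attempt to rewrite the difference $\sum_i \phi(a_i) - \sum_i \phi(b_i)$ as a sum of products of nonnegative quantities.

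For each $i$, I would introduce a ``slope'' $c_i$ satisfying $\phi(a_i) - \phi(b_i) = c_i (a_i - b_i)$. When $a_i \neq b_i$ this is forced to be the slope of the chord of $\phi$ joining $a_i$ and $b_i$; when $a_i = b_i$, I would choose $c_i$ to be any value in the interval $[\phi'_-(a_i), \phi'_+(a_i)]$ of subgradients of $\phi$ at $a_i$ (which is nonempty because $\phi$ is convex on an open interval). The first key step is then to show
\[
c_1 \geq c_2 \geq \cdots \geq c_n.
\]
This is the place where convexity does the real work: one uses the standard fact that for convex $\phi$, the slope of a chord between two points is monotone in both endpoints, and the sequences $(a_i)$ and $(b_i)$ are each decreasing, so the endpoints $\{a_i, b_i\}$ of the $i$th chord lie weakly to the left of those of the $(i-1)$th. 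A case split (according to the relative order of $a_i, b_i, a_{i+1}, b_{i+1}$) plus the three-point slope inequality for convex functions handles this cleanly; the edge cases where consecutive values coincide are exactly what the subgradient freedom in the definition of $c_i$ is designed to accommodate.

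Once monotonicity of the $c_i$ is established, the conclusion follows from Abel summation:
\[
\sum_{i=1}^n \phi(a_i) - \sum_{i=1}^n \phi(b_i) \;=\; \sum_{i=1}^n c_i d_i \;=\; \sum_{i=1}^{n-1} (c_i - c_{i+1}) A_i + c_n A_n.
\]
The majorization hypotheses give $A_i \geq 0$ for $i < n$ and $A_n = 0$, while the previous step gives $c_i - c_{i+1} \geq 0$, so every term on the right is nonnegative and the inequality follows.

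The main obstacle, as is typical with Karamata, is the chord-monotonicity step: making sure the definition of $c_i$ is consistent when some $a_i = b_i$ or $a_i = a_{i+1}$ or $b_i = b_{i+1}$, and that the three-point convexity inequality really forces the slopes to be ordered in every configuration. Once one is willing to select $c_i$ from the subgradient set at coincident points, these cases all collapse to straightforward invocations of the inequality $\frac{\phi(y) - \phi(x)}{y - x} \leq \frac{\phi(z) - \phi(y)}{z - y}$ for $x < y < z$, and nothing more delicate is needed.
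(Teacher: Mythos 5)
The paper does not actually prove this statement: Theorem~\ref{karamata} is Karamata's classical majorization inequality, imported by reference to \cite{kadelburg} and used as a black box (in the paper, only the concave case with $\phi(t)=t^q$, $0<q<1$, and $n=2$ is needed). There is therefore no ``paper's proof'' to compare against. That said, your argument is the standard and correct Abel-summation proof of Karamata, and it is essentially the one given in the cited reference. The decomposition $\sum_i\phi(a_i)-\sum_i\phi(b_i)=\sum_{i=1}^{n-1}(c_i-c_{i+1})A_i+c_nA_n$ is right, and the two hypotheses feed in exactly where you say: $A_i\geq0$ for $i<n$ and $A_n=0$ from majorization, $c_i\geq c_{i+1}$ from convexity. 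The one place worth being fully explicit is the monotonicity $c_i\geq c_{i+1}$ in the degenerate cases. Writing $m_i=\min\{a_i,b_i\}$ and $M_i=\max\{a_i,b_i\}$, the ordering hypotheses give $m_i\geq m_{i+1}$ and $M_i\geq M_{i+1}$; when both pairs are nondegenerate this yields $c_i=S(m_i,M_i)\geq S(m_{i+1},M_{i+1})=c_{i+1}$ by monotonicity of the chord slope $S$ in each endpoint. When $a_i=b_i=p$, your subgradient choice $c_i\in[\phi'_-(p),\phi'_+(p)]$ does work, because the endpoints $m_{i+1},M_{i+1}$ lie in $(\alpha,p]$ so $c_{i+1}\leq\phi'_-(p)\leq c_i$, and symmetrically $c_{i-1}\geq\phi'_+(p)\geq c_i$. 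So the proposal is complete and correct, and, given that it matches the approach of the source the paper cites, there is no meaningful divergence to report.
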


Note that the inequality is reversed if $\phi$ is concave, because in that case $-\phi$ is convex. We only need the following special case:

\begin{lemma}
Suppose $0<q<1$ and let $x,y,z\geq 0$ such that $\max\{x,z\}\leq y\leq x+z$. Then
\[
(x-y+z)^q\leq x^q-y^q+z^q.
\]
\end{lemma}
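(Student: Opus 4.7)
The plan is to reduce this to a direct application of Karamata's inequality (Theorem \ref{karamata}) with the concave function $t \mapsto t^q$. First I would set $w := x + z - y$ and observe that the hypotheses $\max\{x,z\} \leq y \leq x+z$ give $0 \leq w \leq \min\{x,z\}$. The desired inequality $(x-y+z)^q \leq x^q - y^q + z^q$ then rewrites as
\[
w^q + y^q \leq x^q + z^q.
\]

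Next, by the symmetry of the claim in $x$ and $z$, I may assume without loss of generality that $x \geq z$. I would then set up the two decreasing sequences $a = (y, w)$ and $b = (x, z)$, which are indeed decreasing since $y \geq x \geq z \geq w$. To apply Karamata, I need to verify that $a$ majorizes $b$: the first-order condition $y \geq x$ is precisely the hypothesis $y \geq \max\{x,z\}$, and the sum condition $y + w = x + z$ is immediate from the definition of $w$. Applying Karamata's inequality to the convex function $\phi(t) = -t^q$ (which is convex on $[0,\infty)$ since $q \in (0,1)$ makes $t^q$ concave) yields $-y^q - w^q \geq -x^q - z^q$, i.e. the desired inequality.

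The only mild technical point is that Theorem \ref{karamata} as stated requires the terms to lie in an open interval $(\alpha,\beta)$, while here some of $w, x, y, z$ may equal $0$. I would handle this by a routine continuity argument: apply Karamata to $(y+\varepsilon, w+\varepsilon)$ and $(x+\varepsilon, z+\varepsilon)$ on an interval $(0,\beta)$ for $\varepsilon > 0$ (the majorization conditions are preserved since we add the same constant to every term), and then let $\varepsilon \downarrow 0$, using continuity of $t \mapsto t^q$ on $[0,\infty)$. This is the only step that requires any care; the substantive content is entirely contained in the majorization check, which is a one-line verification.
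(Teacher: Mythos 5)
Your proof is correct and follows essentially the same route as the paper: rewrite the inequality as $w^q+y^q\leq x^q+z^q$ with $w=x+z-y$, check that $(y,w)$ majorizes $(\max\{x,z\},\min\{x,z\})$, and apply Karamata with the concave function $t\mapsto t^q$, handling boundary values by continuity. The only cosmetic difference is that you make the reduction $x\geq z$ explicit and spell out the $\varepsilon$-perturbation for the continuity step, whereas the paper compresses both into a single sentence.
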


\begin{proof}
By continuity, it is sufficient to treat the case when $x,y,z>0$ and $y<x+z$. The inequality is equivalent to
\[
(x-y+z)^q+y^q\leq x^q+z^q,
\]
which is just a special case of Theorem \ref{karamata} in the case of $n=2$, $a_1=y,a_2=x-y+z$, $b_1=\max\{x,z\},b_2=\min\{x,z\}$ and $\phi(t)=t^q$, which is concave.
\end{proof}

\begin{lemma}\label{updown}
Suppose $0<q<1$, $a$ is an up-down sequence of length $2k$ and $V^-(a)\leq a_0$. Then $V^-(a^q)\leq a_0^q$.
\end{lemma}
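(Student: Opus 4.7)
My plan is to prove this by induction on $k$, using the preceding lemma as the inductive step's engine.

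The base case $k=0$ is immediate: $V^-(a^q)=0\leq a_0^q$.

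For the inductive step, assume the result for up-down sequences of length $2(k-1)$ and let $a=(a_0,a_1,\ldots,a_{2k})$ be an up-down sequence of length $2k$ with $V^-(a)\leq a_0$. The idea is to "collapse" the first three entries $a_0,a_1,a_2$ into a single entry using the preceding lemma. Set
\[
a_0' := a_0 - a_1 + a_2.
\]
Since $a_1-a_2\leq V^-(a)\leq a_0$, we have $a_0'\geq 0$. Since $a_0\leq a_1$, we have $a_0'\leq a_2$, and since $a_2\leq a_3$, we get $a_0'\leq a_3$. Hence $a':=(a_0',a_3,a_4,\ldots,a_{2k})$ is a legitimate up-down sequence of length $2(k-1)$. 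Its negative variation satisfies
\[
V^-(a')=\sum_{i=2}^{k}(a_{2i-1}-a_{2i})=V^-(a)-(a_1-a_2)\leq a_0-(a_1-a_2)=a_0',
\]
so the inductive hypothesis applies and yields $V^-((a')^q)\leq (a_0')^q$, i.e.
\[
\sum_{i=2}^{k}(a_{2i-1}^q-a_{2i}^q)\leq (a_0-a_1+a_2)^q.
\]

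To finish, I apply the preceding lemma to $(x,y,z)=(a_0,a_1,a_2)$: the hypotheses $\max\{a_0,a_2\}\leq a_1$ and $a_1\leq a_0+a_2$ are exactly the up-down condition and the bound $V^-(a)\leq a_0$ restricted to the first three terms. The lemma gives $(a_0-a_1+a_2)^q\leq a_0^q-a_1^q+a_2^q$, equivalently
\[
a_1^q-a_2^q\leq a_0^q-(a_0-a_1+a_2)^q.
\]
Adding the two displayed inequalities collapses the middle term:
\[
V^-(a^q)=(a_1^q-a_2^q)+\sum_{i=2}^{k}(a_{2i-1}^q-a_{2i}^q)\leq a_0^q-(a_0-a_1+a_2)^q+(a_0-a_1+a_2)^q=a_0^q,
\]
completing the induction.

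The only mildly delicate point is checking that $a'$ really is an up-down sequence, i.e.\ that $a_0'\geq 0$ and $a_0'\leq a_3$; both use the up-down structure and the global hypothesis $V^-(a)\leq a_0$ in a light way. The rest is a clean two-line addition. I do not expect a serious obstacle here, since the preceding lemma was designed for exactly this telescoping argument.
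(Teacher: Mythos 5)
Your proof is correct and takes essentially the same route as the paper: induct on $k$, collapse $(a_0,a_1,a_2)$ into $a_0-a_1+a_2$, apply the inductive hypothesis to the shortened sequence, and use the preceding Karamata-type lemma to uncollapse. The only difference is that you verify more carefully that $a'$ is a legitimate up-down sequence (checking $a_0'\geq0$ and $a_0'\leq a_3$), which the paper asserts without spelling out.
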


\begin{proof}
We prove this by induction on $k$. For $k=0$, this is trivial, as both negative variations are zero. Suppose the lemma holds for all sequences of length at most $2k-2$ and $a$ is a sequence of length $2k$. Then $\tilde a=(a_0-a_1+a_2,a_3,a_4,\ldots,a_{2k-1},a_{2k})$ is an up-down sequence of length $2k-2$. Observe that
\[
V^-(\tilde a)=V^-(a)-(a_1-a_2)\leq a_0 - a_1 + a_2.
\]
By the inductive hypothesis and the previous lemma, we have
\[
V^-(\tilde a^q)\leq (a_0 - a_1 + a_2)^q \leq a_0^q - a_1^q + a_2^q.
\]
But
\[
V^-(\tilde a^q)+a_1^q-a_2^q = V^-(a^q),
\]
so this is precisely the conclusion we wanted.
\end{proof}

We can now proceed to the proof of the monotonicity conjecture. First, we note that there is a typo in the statement Lemma 9 in \cite{baryshnikov}, which relates the various notions of $\ucat^p$ for $p<\infty$. The proof as stated there, remains valid. For convenience, we restate the lemma in its correct form.

\begin{lemma}[\cite{baryshnikov}, Lemma 9]\label{powers}
If $f:X\to[0,\infty)$ is any continuous function, then
\[
\ucat^p(f)=\ucat(f^p).
\]
\end{lemma}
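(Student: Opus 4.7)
The plan is to exploit the fact that raising a nonnegative function to a positive power preserves unimodality, because doing so merely reparameterizes the values without changing the superlevel sets.

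First I would verify the following basic observation: a continuous function $u:X\to[0,\infty)$ is unimodal if and only if $u^p$ is unimodal for any $p>0$. This is immediate from the definition since
\[
(u^p)^{-1}[c,\infty)=u^{-1}[c^{1/p},\infty)
\]
for all $c\geq 0$. Hence the family of superlevel sets of $u^p$ is precisely the family of superlevel sets of $u$, merely reindexed by the monotone bijection $c\mapsto c^{1/p}$ of $[0,\infty)$. The conditions of contractibility for $0<c\leq M^p$ and emptiness for $c>M^p$ therefore hold for $u^p$ exactly when the corresponding conditions hold for $u$ with threshold $M$.

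Next I would establish the two inequalities defining the equality. For $\ucat^p(f)\geq\ucat(f^p)$, start with a unimodal $p$-decomposition $f=(\sum_{i=1}^n u_i^p)^{1/p}$ realizing $\ucat^p(f)=n$. Raising both sides to the $p$-th power gives
\[
f^p=\sum_{i=1}^n u_i^p,
\]
and by the observation each $u_i^p$ is unimodal, so $(u_i^p)_{i=1}^n$ is a standard unimodal decomposition of $f^p$, yielding $\ucat(f^p)\leq n$. Conversely, for $\ucat(f^p)\geq\ucat^p(f)$, take a minimal standard unimodal decomposition $f^p=\sum_{i=1}^n v_i$ with each $v_i$ unimodal, and set $u_i=v_i^{1/p}$. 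Again by the observation, each $u_i$ is unimodal, and
\[
f=\left(\sum_{i=1}^n u_i^p\right)^{1/p}
\]
is a unimodal $p$-decomposition of $f$, so $\ucat^p(f)\leq n$.

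I do not anticipate a serious obstacle here. The only point worth checking carefully is that the defining threshold $M$ for unimodality behaves correctly under the power map, but as noted this is a routine consequence of $c\mapsto c^{1/p}$ being a monotone bijection of $[0,\infty)$, and continuity of $u^p$ follows from continuity of $u$ together with continuity of $t\mapsto t^p$ on $[0,\infty)$.
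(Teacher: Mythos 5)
Your proof is correct and is essentially the standard argument for this lemma: the key observation is that $u\mapsto u^p$ is a bijection on unimodal functions because $(u^p)^{-1}[c,\infty)=u^{-1}[c^{1/p},\infty)$, so the superlevel set families coincide up to reindexing, and then the two inequalities follow by applying the bijection to the respective minimal decompositions. Note that the paper itself does not reprove this lemma — it simply restates it (correcting a typo) and defers to the proof in \cite{baryshnikov}, which is precisely the argument you give.
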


We can now prove the monotonicity conjecture for functions $f:\RR\to[0,\infty)$.

\begin{theorem}\label{monotonicity_real}
Suppose $f:\RR\to[0,\infty)$ is of bounded variation and $0<p_1<p_2\leq\infty$. Then $\ucat^{p_1}(f)\leq\ucat^{p_2}(f)$.
\end{theorem}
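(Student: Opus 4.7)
The plan is to reduce the claim to the combinatorial inequality encoded in Lemma~\ref{updown}. By Lemma~\ref{powers} it suffices to prove $\ucat(f^{p_1})\leq\ucat(f^{p_2})$, and we may assume $\ucat^{p_2}(f)<\infty$. First consider $p_2<\infty$. Set $g=f^{p_2}$ and $q=p_1/p_2\in(0,1)$, so that $f^{p_1}=g^q$. Since $\ucat(g)<\infty$, the support of $g$ is a finite union of intervals by Theorem~\ref{bv_has_ucat}, and $g^q$ shares that support; by Proposition~\ref{intervals} applied componentwise it is enough to prove $M(g^q)\leq M(g)$. I will obtain this by showing that every forced-max interval for $g^q$ is automatically forced-max for $g$, so that any disjoint family of the former is a disjoint family of the latter.

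To that end, fix $(x,y)$ forced-max for $g^q$ and choose a partition $x=t_0<\cdots<t_m=y$ with $\sum_i\max\{0,g^q(t_{i-1})-g^q(t_i)\}>g^q(x)$. Repeatedly remove any interior index $i$ for which $g(t_i)$ lies weakly between $g(t_{i-1})$ and $g(t_{i+1})$; a routine case check shows that each such removal leaves both $V^-$-sums (for $g$ and for $g^q$) unchanged, since $t\mapsto t^q$ preserves weak betweenness. The resulting subsequence $g(s_0),\ldots,g(s_n)$ strictly alternates. After dropping $s_n$ if the last step is an up-step (which contributes $0$ to either $V^-$-sum), I split into two cases. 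In Case~A, where $g(s_0)<g(s_1)$, the sequence $a_i:=g(s_i)$ is an up-down sequence in the sense preceding Lemma~\ref{updown} with $a_0=g(x)$, and the hypothesis becomes $V^-(a^q)>a_0^q$; the contrapositive of Lemma~\ref{updown} yields $V^-(a)>a_0$, so $V^-(g;(x,y))\geq V^-(a)>g(x)$. In Case~B, where $g(s_0)>g(s_1)$, I peel off the first down-step: the shifted tail $a_i:=g(s_{i+1})$ is up-down, and subtracting $g^q(s_0)-g^q(s_1)$ from both sides of the witnessing inequality gives $V^-(a^q)>g^q(s_1)=a_0^q$; Lemma~\ref{updown} promotes this to $V^-(a)>g(s_1)$, and adding back the initial descent delivers $V^-(g;(x,y))>g(x)$.

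The case $p_2=\infty$ is handled by two supplementary observations. First, $t\mapsto t^{p_1}$ preserves unimodality (the superlevel sets coincide as sets) and commutes with pointwise maximum, so $\ucat^\infty(f^{p_1})\leq\ucat^\infty(f)$. Second, $\ucat(h)\leq\ucat^\infty(h)$ for any admissible $h:\RR\to[0,\infty)$: Proposition~\ref{maxfree} adapts to max-decompositions because on a $\dec$-max-free interval each summand $u_i$ is monotone, whence $\max_iu_i$ is either monotone or first decreases and then increases, and therefore $V^-(h;(x,y))\leq h(x)$; consequently every forced-max interval must contain a local maximum of some summand, giving $M(h)\leq\ucat^\infty(h)$ and hence $\ucat(h)=M(h)\leq\ucat^\infty(h)$. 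Chaining these, $\ucat^{p_1}(f)=\ucat(f^{p_1})\leq\ucat^\infty(f^{p_1})\leq\ucat^\infty(f)$. The main technical obstacle is the bookkeeping in the up-down extraction: verifying that point-removal preserves both $V^-$-sums simultaneously, and that the two starting parities align correctly with the indexing in Lemma~\ref{updown}, including the last-step parity trimming.
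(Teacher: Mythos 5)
Your proof is correct and follows essentially the same route as the paper: reduce via Lemma \ref{powers}, use Lemma \ref{updown} to show that forced-max intervals for the lower power are forced-max for the higher power, and conclude via the identification of $\ucat$ with $M$ (the paper proves the contrapositive implication and normalizes to $p_1=1$, but these are cosmetic differences). The only genuine divergence is that for $p_2=\infty$ you give a short self-contained argument (adapting Proposition \ref{maxfree} to $\ell^\infty$-decompositions), where the paper simply cites Lemmas 6, 9, and 16 of Baryshnikov--Ghrist; this is a small but welcome addition, not a different method.
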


\begin{proof}
In the case of $p_2=\infty$ this follows trivially from Lemma 6, Lemma 9 (our Lemma \ref{powers}) and Lemma 16 of \cite{baryshnikov}, so it suffices to treat the case where $p_1,p_2<\infty$. In this case, observe that $\ucat^{p_1}(f)\leq\ucat^{p_2}(f)$ is equivalent to $\ucat(f^{p_1})\leq\ucat^{\frac{p_2}{p_1}}(f^{p_1})$ by Lemma \ref{powers}. Therefore, we may assume without loss of generality that $p_1=1$ and $p_2=p>1$. Under this reduction, we want to prove that $\ucat(f)\leq\ucat(f^p)$.

To prove this, observe that it is sufficient to prove the following statement:
\[
\text{if $V^-(f;[x,y])>f(x)$, then $V^-(f^p;[x,y])>f(x)^p$.}
\]
This means precisely that each forced-max interval of $f$ is also a forced interval of $f^p$, which then establishes the claim by Theorem \ref{sweeping}. We are going to prove the contrapositive. Assume therefore that $V^-(f^p;[x,y])\leq f(x)^p$. By the definition of negative variation, this means precisely that
\[
\sup\left[\sum_{i=1}^n\max\{0,f(a_{i-1})^p-f(a_i)^p\}\right]\leq f(x)^p,
\]
where the supremum is taken over all partitions $P$ of the form $x=a_0\leq a_1\leq a_2\leq\ldots\leq a_n=y$ of the interval $[x,y]$. Fix an arbitrary partition $P$ of this kind. Note that this time, we allow duplications $a_i=a_{i+1}$ in $P$. This does not change the relevant supremum and allows us to assume that $n=2k$ for some $k\in\NN$ and that
\[
f(a_0)\leq f(a_1)\geq f(a_2)\leq\ldots\geq f(a_{2k}).
\]
Let $c$ be the up-down sequence of length $2k$ defined by $c_i=f(a_i)^q$. We have
\[
V^-(c)=\sum_{i=1}^n\max\{0,f(a_{i-1})^p-f(a_i)^p\}\leq f(x)^p=c_0.
\]
By Lemma \ref{updown} with $q=\frac1p$, we have
\[
V^-\left(c^{\frac1p}\right)\leq c_0^{\frac1p}.
\]
This means precisely that
\[
\sum_{i=1}^n\max\{0,f(a_{i-1})-f(a_i)\}\leq f(x).
\]
Since the partition $P$ was arbitrary, the same holds for the supremum over all partitions:
\[
V^-(f;[x,y])\leq f(x).
\]
This concludes the proof.
\end{proof}

This immediately proves the monotonocity conjecture for the circle $S^1$.

\begin{corollary}
Suppose $f:S^1\to[0,\infty)$ is of bounded variation and $0<p_1<p_2\leq\infty$. Then $\ucat^{p_1}(f)\leq\ucat^{p_2}(f)$.
\end{corollary}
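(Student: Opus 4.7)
The plan is to reduce the circle case to the real-line case proved in Theorem \ref{monotonicity_real}. First, by Lemma \ref{powers}, the inequality $\ucat^{p_1}(f) \leq \ucat^{p_2}(f)$ is equivalent to $\ucat(f^{p_1}) \leq \ucat(f^{p_2})$, so after replacing $f$ with $f^{p_1}$ and setting $p = p_2/p_1$ I would reduce to showing $\ucat(f) \leq \ucat(f^p)$ for $p > 1$; the case $p_2 = \infty$ is handled by the same appeal to Lemma 16 of \cite{baryshnikov} used at the start of the proof of Theorem \ref{monotonicity_real}.

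Next I would dispose of the easy case when $f$ has a zero at some $a \in S^1$: then $f_a$ and $f_a^p$ both vanish at the endpoints of $[0,1]$, a unimodal decomposition of $f$ on $S^1$ corresponds to a unimodal decomposition of $f_a$ on $[0,1]$, and similarly for $f^p$, so $\ucat(f) = \ucat(f_a)$ and $\ucat(f^p) = \ucat(f_a^p)$; Theorem \ref{monotonicity_real} (via Proposition \ref{intervals}) then gives the desired inequality.

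Suppose now that $f$ has no zeros, so neither does $f^p$. By Theorem \ref{minindex} it suffices to show $M_a^+(f) \leq M_a^+(f^p)$ for every $a \in S^1$. The key input is the pointwise implication
\[
V^-(g;[x,y]) > g(x) \implies V^-(g^p;[x,y]) > g(x)^p,
\]
which is exactly the pointwise step used in Theorem \ref{monotonicity_real} and is a consequence of Lemma \ref{updown} applied with $q = 1/p$. I would use it to transfer each disjoint family of forced-max intervals of $\hat f_a$ contained in $(-\infty,1]$ to a disjoint family of forced-max intervals of the extension $\widehat{f_a^p}$ of $f_a^p$, thereby establishing $M_a^+(f) \leq M_a^+(f^p)$.

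The hard part will be that on $[-1,0] \cup [1,2]$ the extensions $\hat f_a$ and $\widehat{f_a^p}$ are linear, and since linear extension does not commute with $p$-th powers they are not $p$-th powers of each other there. The implication above therefore does not apply verbatim to a forced-max interval that extends into $[-1,0)$. I would resolve this via the greedy description of $M_a^+$ given in the remark after Theorem \ref{minindex}: the leftmost interval may always be taken to start at $x_0 = -\infty$, in which case being forced-max just means that $f_a$ has some descent on $(0,x_1)$, a condition manifestly preserved under $t \mapsto t^p$. Every subsequent greedy interval is contained in $[0,1]$, where $\hat f_a = f_a$ and $\widehat{f_a^p} = f_a^p$, so the implication from Lemma \ref{updown} applies directly and the argument goes through.
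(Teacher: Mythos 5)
Your proof is correct and follows essentially the same route as the paper's: reduce to $\ucat(f)\leq\ucat(f^p)$ via Lemma \ref{powers}, handle $p_2=\infty$ by the same appeal to Lemma~16 of \cite{baryshnikov}, and use the characterization of Theorem \ref{minindex} together with the transfer of forced-max intervals from the proof of Theorem \ref{monotonicity_real}. What you add beyond the paper is the treatment of two points the published argument leaves tacit. You explicitly dispose of the case in which $f$ has a zero, which Theorem \ref{minindex} does not cover; your observation that any unimodal summand of $f$ (resp.\ of $f_a$) must vanish at $a$ (resp.\ at $0$ and $1$), so that $\ucat(f)=\ucat(f_a)$, is the right reduction to Theorem \ref{monotonicity_real}. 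More substantively, you correctly identify that the extension operator of Equation \eqref{extension} does not commute with $p$-th powers, so $\widehat{f_a^p}\neq(\hat f_a)^p$ on $[-1,0]$, and the implication ``forced-max for $g$ implies forced-max for $g^p$'' cannot be applied verbatim to the extended function. Your fix is exactly what is needed: since $\hat f_a$ is nondecreasing on $(-\infty,0]$, the leftmost forced-max interval may be taken to be $(-\infty,x_1)$ with $x_1>0$, and for such an interval the forced-max condition reads $V^-(f_a;(0,x_1))>0$, which is manifestly preserved by $t\mapsto t^p$; every subsequent interval lies in $(0,1]$, where $\hat f_a=f_a$ and $\widehat{f_a^p}=f_a^p$ really are $p$-th powers of one another, so the Lemma \ref{updown} implication applies directly. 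In short, you have filled in the content behind the paper's one-line ``the inequality follows from Theorem \ref{monotonicity_real}'' rather than found a genuinely different argument.
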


\begin{proof}
In the case of $p_2=\infty$ this follows from Lemma 6, Lemma 9 (our Lemma \ref{powers}) and Lemma 16 of \cite{baryshnikov} (note that Lemma 16 still holds for $S^1$), so it suffices to treat the case $p_1<p_2<\infty$. In this case, we use Lemma \ref{powers} and Theorem \ref{minindex}:
\[
\ucat^{p_1}(f)=\min\{2,M^+(f^{p_1})\}\leq\min\{2,M^+(f^{p_2})\}=\ucat^{p_2}(f).
\]
The inequality follows from Theorem \ref{monotonicity_real}.
\end{proof}

\subsection{Graphs: First Counterexample}\label{graphs1}

For general spaces, the monotonicity conjecture is false. The simplest counterexamples can be constructed on graphs. Similar ideas can then be exploited to yield counterexamples on Euclidean spaces.

Let $G$ be the graph (abstract simplicial complex of dimension $1$) whose vertices and edges are given by\footnote{Here, $xy$ is considered as shorthand for $\{x,y\}$.}
\begin{align*}
V&=\{a_1,a_2,b_1,b_2,c,d_1,d_2,d_3,e_1,e_2,e_3,q\},\\
E&=\{a_1a_2,a_2c,b_1b_2,b_2c,cd_1,ce_1,d_1d_2,d_2d_3,d_3f,e_1e_2,e_2e_3,e_3q\},
\end{align*}
and let $X$ be the polytope of its geometric realization, for instance as in the picture below, where the vertices $c$ and $q$ are realized as $(0,0)$ and $(-2,-2)$, $a_1,a_2,d_1,d_2,d_3$ are realized as $(2,0), (1,0), (-1,0), (-2,0)$ and $(-2,-1)$ and $b_1,b_2$, $e_1,e_2,e_3$ as their reflections across $y=x$. For simplicity, we identify the vertices of $G$ with their corresponding points in $X$.

\begin{center}
\includegraphics[width=170pt]{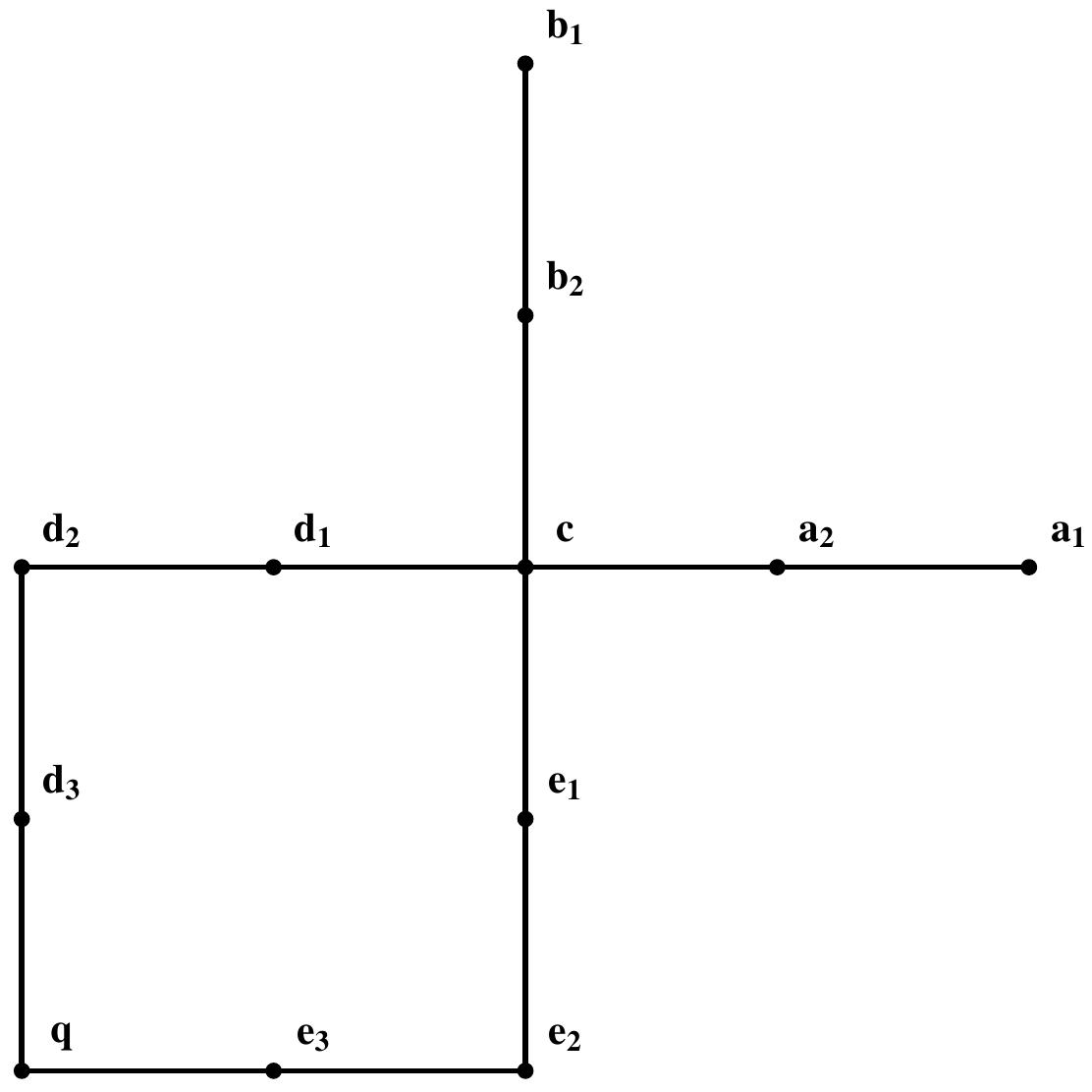}
\end{center}

We define three piecewise linear (with respect to the graph structure above) functions $f,u_1,u_2:X\to[0,\infty)$. The functions $u_1$ and $u_2$ are defined by specifying their values on the vertices ($i=1,2,3$):
\begin{align*}
u_1(a_1)&=5,& u_1(a_2)&=u_1(c)=u_1(d_i)=u_1(q)=1,& u_1(b_1)&=u_1(b_2)=u_1(e_i)=0,\\
u_2(b_1)&=5,& u_2(b_2)&=u_2(c)=u_2(e_i)=u_2(q)=1,& u_2(a_1)&=u_2(a_2)=u_2(d_i)=0.
\end{align*}
See picture (where $u_2$ is $u_1$ reflected across the axis of symmetry):

\begin{center}
\includegraphics[width=170pt]{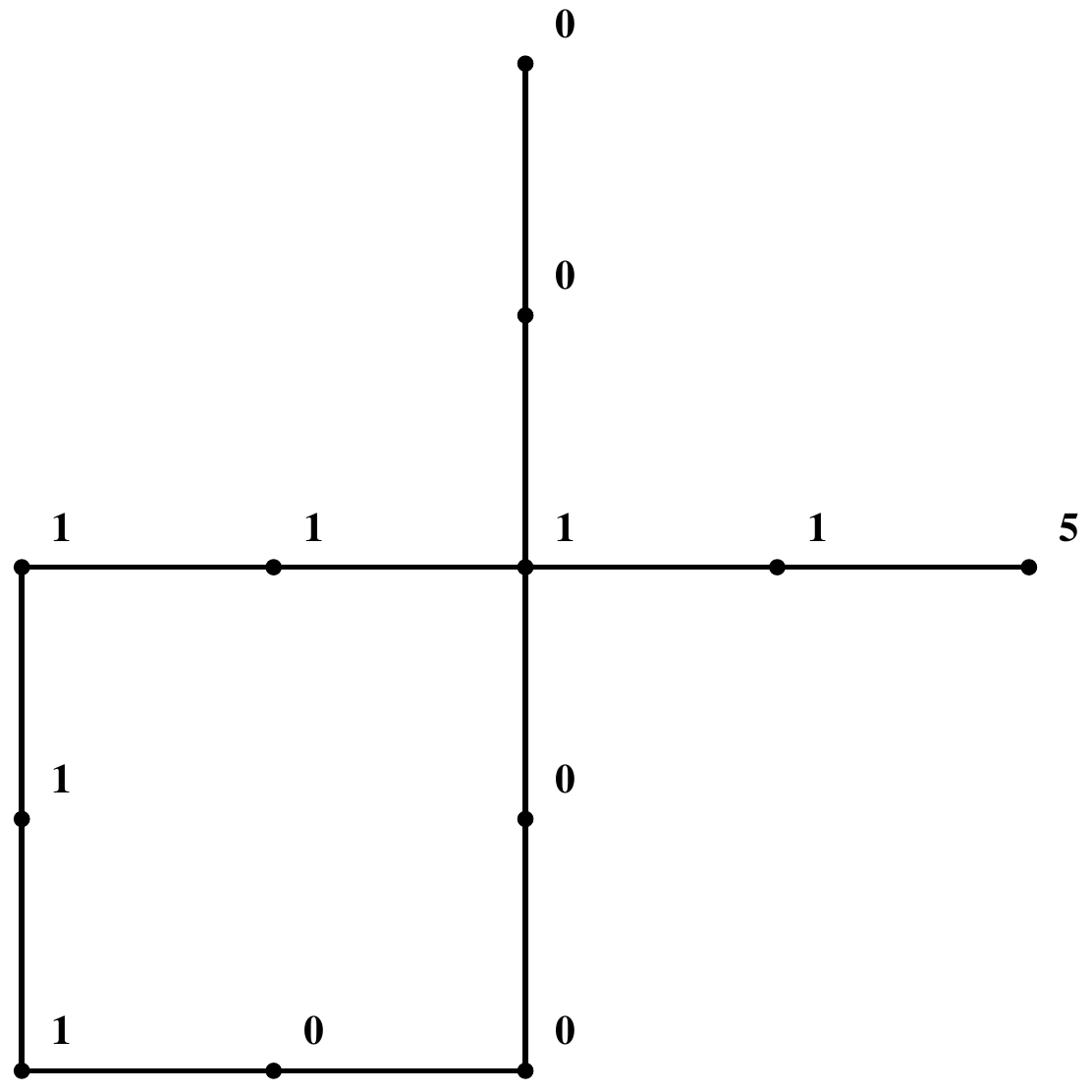} \includegraphics[width=170pt]{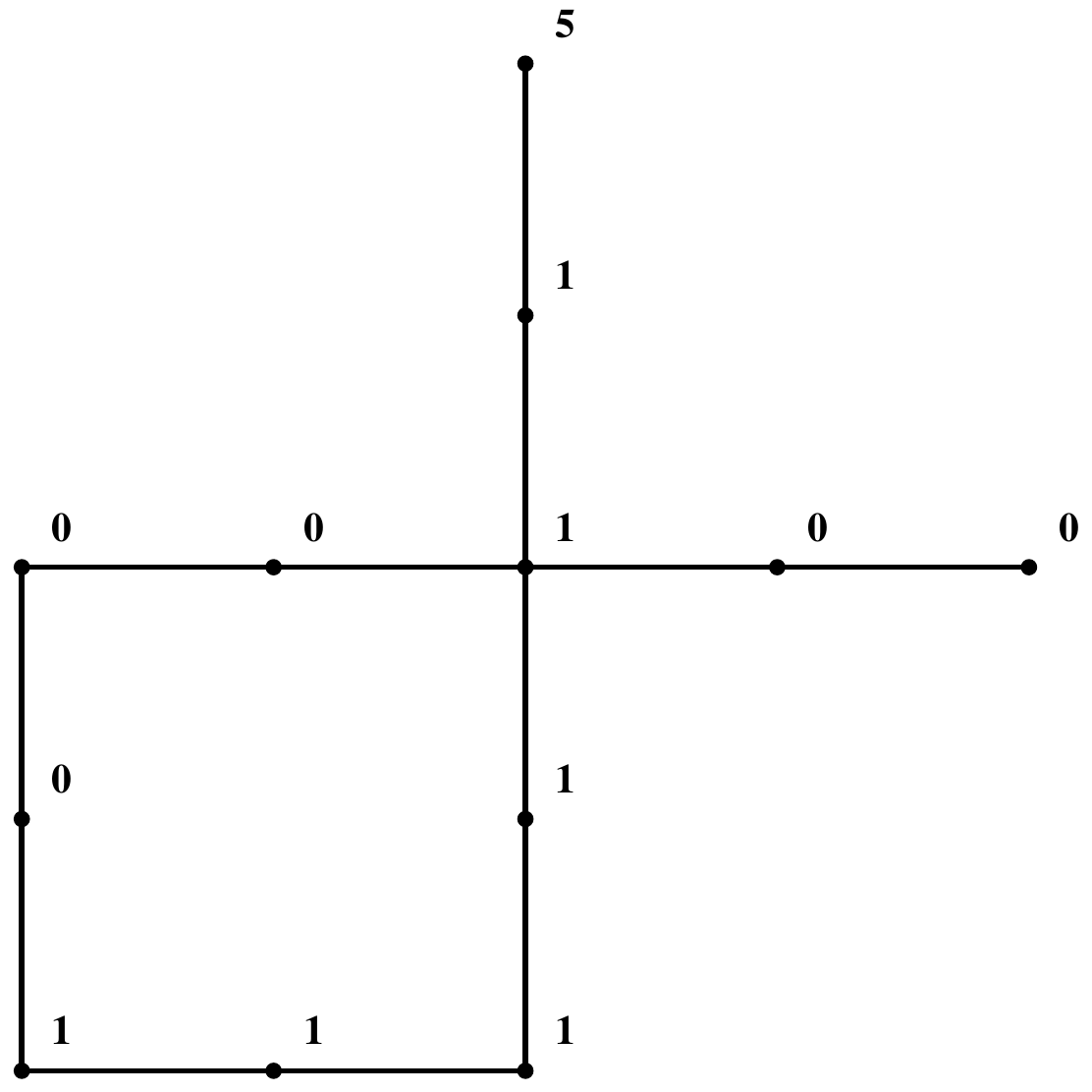}
\end{center}

Finally, we define $f=u_1+u_2$. Note that $f$ may also be given by its values at the vertices ($i=1,2,3$):
\begin{align*}
f(a_1)&=f(b_1)=5,& f(c)&=f(q)=2,& f(a_2)=f(b_2)=f(d_i)=f(e_i)=1.
\end{align*}

\begin{center}
\includegraphics[width=170pt]{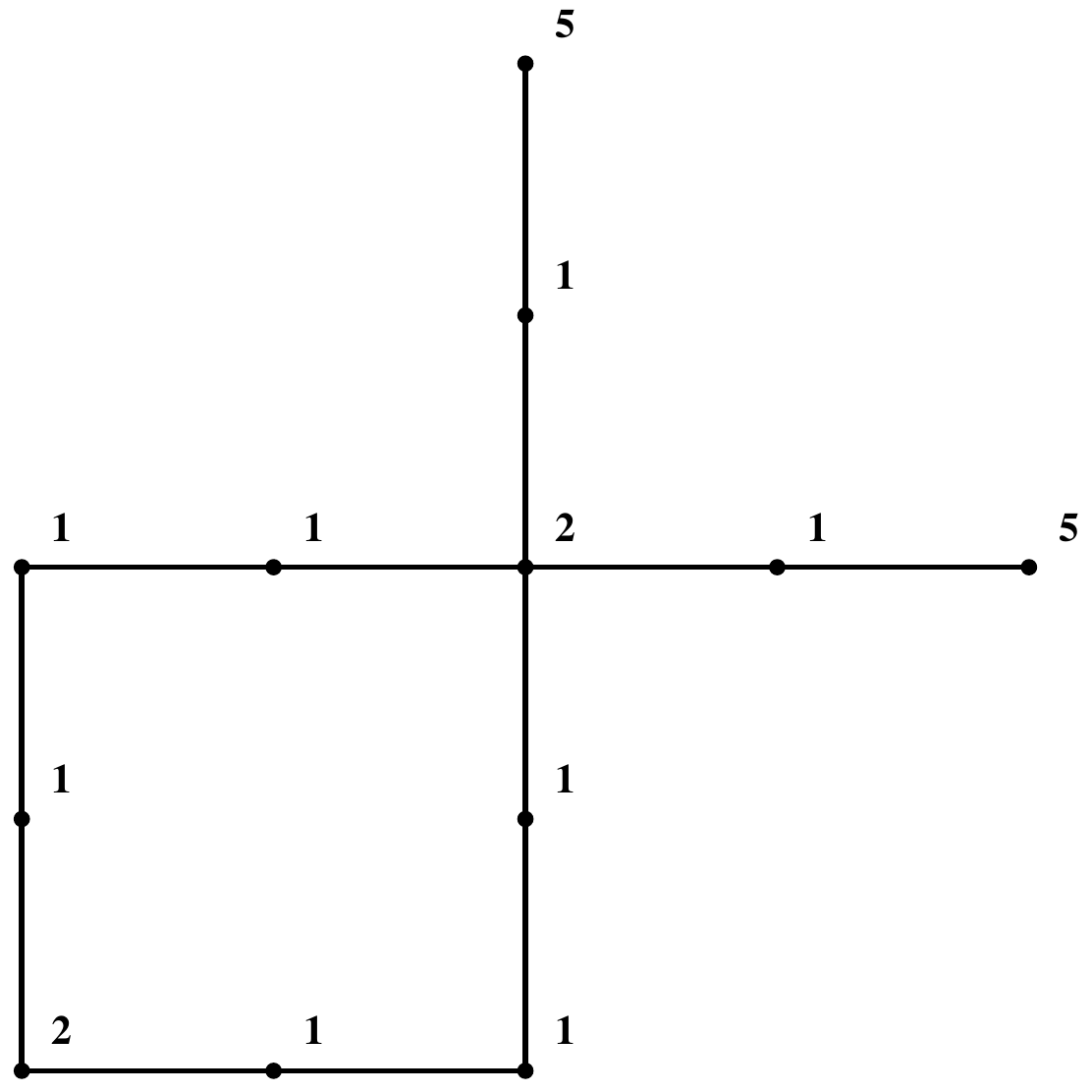}
\end{center}

The following follows directly from the definitions.

\begin{observation}
The functions $u_1$ and $u_2$ are unimodal, $f$ is not. So $\ucat(f)=2$.
\end{observation}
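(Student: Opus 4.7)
The plan is to verify the three assertions separately and then combine them. First, to check that $u_1$ is unimodal, I would analyse the superlevel sets $u_1^{-1}[c,\infty)$ as $c$ decreases from $5$. Since $u_1$ is piecewise linear with values $5,1,0$ and takes the value $5$ only at $a_1$, for $c\in(1,5]$ the superlevel set is a single subinterval of the edge $a_1a_2$ containing $a_1$, hence contractible. At $c=1$ it becomes the subgraph on $\{a_1,a_2,c,d_1,d_2,d_3,q\}$ together with all edges joining consecutive vertices in this list, which is a single path and therefore contractible. For $c\in(0,1)$ the set additionally sprouts short segments into the edges $b_2c$, $ce_1$ and $e_3q$ (the only edges joining a vertex of value $1$ to a vertex of value $0$), which just attaches three leaves to a tree; the result is still contractible. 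For $c>5$ the set is empty. This verifies unimodality of $u_1$ with $M=5$; the argument for $u_2$ is identical by the reflection symmetry of the construction.

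Next, I would show $f$ is not unimodal. Since $f(a_1)=f(b_1)=5$ and these are the only points where $f$ reaches $5$, for $c$ slightly below $5$ the superlevel set $f^{-1}[c,\infty)$ consists of two disjoint small arcs, one inside $a_1a_2$ and one inside $b_1b_2$. In particular it is disconnected, so it is not contractible, contradicting unimodality.

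Finally, the conclusion $\ucat(f)=2$ follows from a lower bound and an upper bound. For the lower bound, the previous paragraph shows $f$ itself is not unimodal, so $\ucat(f)\geq 2$. For the upper bound, the identity $f=u_1+u_2$ together with the unimodality of both summands gives $\ucat(f)\leq 2$. There is no real obstacle here: everything reduces to inspecting a finite piecewise linear picture, and the only thing that requires a moment of care is making sure the superlevel sets of $u_1$ really do stay connected (and in fact tree-like) for every $c>0$, which is why I would go through the three ranges $c\in(1,5]$, $c=1$, $c\in(0,1)$ explicitly.
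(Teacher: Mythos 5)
Your verification is correct and is exactly the routine inspection the paper has in mind when it says the observation "follows directly from the definitions": you check that the superlevel sets of $u_1$ (and $u_2$ by symmetry) are trees for every positive level, that $f$ has a disconnected superlevel set just below $5$, and then combine the two bounds. No gap.
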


This $f$ yields a simple counterexample to the monotonicity conjecture.

\begin{proposition}
The unimodal $\frac12$-category of $f$ is $\ucat(\sqrt{f})=\ucat^{\frac12}(f)=3$.
\end{proposition}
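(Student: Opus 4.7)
The plan is to establish $\ucat(\sqrt{f})\le 3$ by an explicit construction and $\ucat(\sqrt{f})\ge 3$ by a case analysis on peak locations.

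For the upper bound, I will exhibit three unimodal summands exploiting the reflection symmetry $\sigma$ of the graph. Define $w_1:X\to[0,\infty)$ piecewise linearly by setting
\[
w_1(a_1)=\sqrt{5},\quad w_1(a_2)=1,\quad w_1(c)=\tfrac{\sqrt{2}}{2},\quad w_1(d_1)=w_1(d_2)=w_1(e_1)=w_1(e_2)=\tfrac{1}{2},
\]
and $w_1=0$ at $b_1,b_2,d_3,q,e_3$. Let $w_2=w_1\circ\sigma$ (so $w_2$ has its peak at $b_1$) and $w_3=\sqrt{f}-w_1-w_2$. A direct calculation shows that $w_3$ vanishes at every vertex except $d_3,q,e_3$, where it takes the values $1,\sqrt{2},1$ respectively; its support is the arc $d_2{-}d_3{-}q{-}e_3{-}e_2$, making it a unimodal bump of height $\sqrt{2}$ at $q$. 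The supports of $w_1$ and $w_2$ are each subtrees of $X$ (the cycle is broken at $q$), and their superlevel sets are nested contractible subtrees, so $w_1,w_2$ are unimodal as well.

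For the lower bound, suppose $\sqrt{f}=v_1+v_2$ with $v_1,v_2$ unimodal. The key preliminary observation is that for any subtree $T\subseteq X$ and any embedded path $P\subseteq X$, the intersection $T\cap P$ is a single (possibly empty) arc of $P$: any two points of $P$ are joined by a unique simple path in $X$, since the only alternative route would enter the cycle and return to $c$, violating simplicity. Taking $T=v_i^{-1}[h,\infty)$, this says that $v_i|_P$ is unimodal on $P$ for every path $P\subseteq X$.

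Writing $X\setminus\{c\}=A\sqcup B\sqcup R$ for the $a_1$-branch, the $b_1$-branch, and the cycle minus $c$, each $v_i$'s peak lies either at $c$ or in $A$, $B$, or $R$. If $v_1$ peaks at $c$, then $v_2(a_1),v_2(b_1)\ge\sqrt{5}-v_1(c)$ while $v_2(c)=\sqrt{2}-v_1(c)<\sqrt{5}-v_1(c)$, so $v_2^{-1}[\sqrt{5}-v_1(c),\infty)$ contains $a_1$ and $b_1$ but not $c$; since every path from $a_1$ to $b_1$ in $X$ passes through $c$, this set is disconnected, contradicting the unimodality of $v_2$. The case $v_2$ at $c$ is symmetric. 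Of the remaining nine peak-location combinations, all but $(A,B)$ and $(B,A)$ reduce quickly: whenever one can deduce $v_i(z)\le v_i(c)$ for both $i$ at some $z\in\{a_1,b_1\}$, summing gives the impossible $\sqrt{5}\le v_1(c)+v_2(c)=\sqrt{2}$.

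The hard part will be the case where $v_1$ peaks in $A$ and $v_2$ peaks in $B$. My plan here is first to show $v_1(c)=v_1(q)$ and $v_2(c)=v_2(q)$: each tree $v_i^{-1}[v_i(q),\infty)$ contains both the peak and $q$, hence the unique simple path between them (which passes through $c$), giving $v_i(c)\ge v_i(q)$, and the two inequalities must be equalities since both sides sum to $\sqrt{2}$. (The degenerate $v_i(q)=0$ case forces $v_j(b_2)\ge\sqrt{2}$ and hence $v_i(b_2)<0$.) Each tree $v_i^{-1}[v_i(c),\infty)$ must then use exactly one of the two cycle arcs $D,E$ joining $c$ to $q$. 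If both use the same arc, summing $v_1+v_2\ge\sqrt{2}$ on its interior vertices contradicts $\sqrt{f}=1$ there. If they use opposite arcs, say $v_1$ through $D$ and $v_2$ through $E$, then on the $e_i$'s we have $v_1\le 1-v_2(c)<v_1(c)$; but by the preliminary observation $v_1|_E$ is unimodal on the arc $E$, and at its endpoints $v_1(c)=v_1(q)$ while the interior values are strictly less—so the superlevel set of $v_1|_E$ at height $v_1(c)$ consists only of the two isolated points $\{c,q\}$, not an arc, the desired contradiction.
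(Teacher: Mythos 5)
Your upper bound is correct, and it is a slight variant of the paper's construction (your third summand $w_3$ has support only on the arc $[d_2,d_3,q,e_3,e_2]$, whereas the paper's corresponding summand covers all of $d_1,\ldots,e_1$; both give valid decompositions). The lower bound, however, contains a genuine gap.

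Your ``preliminary observation'' --- that for any subtree $T\subseteq X$ and any embedded path $P\subseteq X$ the intersection $T\cap P$ is a single arc, so that $v_i|_P$ is unimodal --- is \emph{false} for this graph, precisely because $X$ contains a cycle. The supporting claim that ``any two points of $P$ are joined by a unique simple path in $X$'' fails already for $P=E$: its endpoints $c$ and $q$ are joined by two simple paths in $X$, one through $D$ and one through $E$. As a concrete counterexample to the observation, take $T$ to be the union of the arm $A$, the arc $D$, and $\{c,q\}$; this is a perfectly good subtree, yet $T\cap E=\{c,q\}$ is two isolated points, not an arc. This is exactly the configuration you arrive at in the ``opposite arcs'' case: you establish that $T_1=v_1^{-1}[v_1(c),\infty)$ contains $c$, $q$, and all of $D$ but misses the interior of $E$, and then invoke the observation to get a contradiction. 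But no contradiction follows --- such a $T_1$ is a tree, so $v_1$ being unimodal is not violated. (The parenthetical handling of the degenerate case $v_i(q)=0$ also does not quite work as stated, since the peak of $v_j$ may lie on the open edge $(b_2,c)$, in which case the path from the peak to $q$ need not pass through the vertex $b_2$; but this is repairable.)

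The hole in the opposite-arcs case can be filled, but it requires a different idea than restriction-to-a-path. Since $v_1\ge\alpha:=v_1(c)>0$ on $D\cup\{c,q\}$, unimodality forces $v_1$ to vanish at some $e^*$ in the interior of $E$ (otherwise $v_1^{-1}[h,\infty)$ would contain the whole cycle for small $h>0$); hence $v_2(e^*)=\sqrt{f(e^*)}\ge 1$. If $\beta:=v_2(c)<1$, then for $h\in(\beta,1)$ the set $v_2^{-1}[h,\infty)$ contains the peak of $v_2$ (in $B$) and $e^*$ but not $c$, so it is disconnected --- contradiction. If $\beta\ge 1$ then $\alpha=\sqrt2-\beta<1$ and the symmetric argument using $v_1$ and the zero of $v_2$ in the interior of $D$ gives the contradiction. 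Note that this repair is essentially the mechanism the paper uses throughout: the paper's proof skips the peak-location case analysis entirely, instead showing directly that $v_a$ and $v_b$ each vanish somewhere on the cycle $K$, which forces $v_a(c)\ge 1$ and $v_b(c)\ge 1$, contradicting $v_a(c)+v_b(c)=\sqrt{2}$. That argument is considerably shorter and avoids both your preliminary observation and your case split.
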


\begin{proof}
A unimodal decomposition of length $3$ can easily be constructed explicitly. Note that $\sqrt{f}$ is not piecewise linear, but can be turned into such by using an appropriate homeomorphism on the domain $X$, all the while retaining the function values at the vertices. The resulting piecewise linear function can be decomposed into three piecewise linear unimodal summands $v_1,v_2$ and $v_3$, which we define by their values at the vertices. The nonzero values are given by ($i=1,2,3$):
\begin{align*}
v_1(a_1)&=\sqrt{5},& v_1(a_2)&=1,& v_1(c)&=\tfrac{\sqrt{2}}{2},\\
v_2(a_1)&=\sqrt{5},& v_2(a_2)&=1,& v_2(c)&=\tfrac{\sqrt{2}}{2},\\
v_3(q)&=\sqrt{2},& v_3(d_i)&=1,& v_3(e_i)&=1.
\end{align*}
The function values at the remaining vertices are zero.

To complete the proof, it therefore remains to show that $\sqrt{f}$ cannot be decomposed into two unimodal summands. We argue by contradiction. Suppose $\sqrt{f}=v_a+v_b$, where $v_a$ and $v_b$ are unimodal. If $r>0$, define superlevel sets
\[
Q_a(r)=v_a^{-1}[r,\infty)\qquad\text{and}\qquad Q_b(r)=v_b^{-1}[r,\infty).
\]
By unimodality, these are all contractible. Note that, since $v_a(a_1)+v_b(a_1)=\sqrt{f(a_1)}$, we either have $v_a(a_1)\geq\frac{\sqrt{5}}2$ or $v_b(a_1)\geq\frac{\sqrt{5}}2$. Without loss of generality, assume that the first possibility holds. (This is also the reason behind the choice of notation for $v_a$ and $v_b$.) Since $\sqrt{f(a_2)}=1$, we have $v_a(a_2)\leq 1$. This immediately implies that $v_a(b_1)\leq 1$, since otherwise we would have $v_a(b_1)=r>1$ and $Q_a(\min\{r,\frac{\sqrt{5}}2\})$ would not be connected (since it is a subspace of $X$ containing $a_1$ and $b_1$ but not $a_2$). This means that $v_b(b_1)\geq\sqrt5-1$. By a symmetric argument, we also have $v_a(a_1)\geq\sqrt5-1$, but we do not use this fact.

Now, let $K$ be the subspace of $X$ consisting of the vertices $c,d_i,e_i,q$ ($i=1,2,3$) and all the edges between these vertices. Note that $K$ is homeomorphic to a circle. By unimodality, there are points $x,y\in K$ such that $v_a(y)=0$ and $v_b(x)=0$. Otherwise we would have $K\subseteq Q_a(r)$ or $K\subseteq Q_b(r)$ for some $r>0$. Therefore $v_a(x)=v_b(y)=1$. Since $Q_a(1)$ and $Q_b(1)$ are contractible, there is a path from $a_1$ to $x$ in $Q_a(1)$, implying that $c\in Q_a(1)$, and a path from $b_1$ to $y$ in $Q_b(1)$, implying that $c\in Q_b(1)$. This implies that $v_a(c)+v_b(c)\geq 2$, contradicting the fact that $v_a(c)+v_b(c)=\sqrt{f(c)}=\sqrt2$ and concluding the proof.
\end{proof}

To sum up, we have found a space $X$, a function $f:X\to[0,\infty)$ and values $0<p_1<p_2<\infty$ such that $\ucat^{p_1}(f)>\ucat^{p_2}(f)$. Hence, the monotonicity conjecture is not generally true.

\begin{remark}
In fact, by changing the function values of $u_1$ and $u_2$ appropriately, the same example can be modified to show that the monotonicity conjecture is not generally true for any pair of exponents $0<p_1<p_2<\infty$.

Further note that this example implies the failure of monotonicity for a very general class of graphs: namely, whenever the graph contains a cycle which contains a point of valence $4$, monotonicity cannot hold in general. In fact, monotonicity fails for an even larger class of graphs: the point of valence $4$ can be replaced by two points of valence $3$ as in the picture below, yielding another counterexample. The proof is very similar to the one above, so we omit it. So, if a connected graph contains a cycle and a point of valence $3$ or more somewhere outside this cycle, monotonicity cannot hold in general. Note that this severely limits the collection of CW complexes for which monotonicity can possibly hold.
\end{remark}

\begin{center}
\includegraphics[width=170pt]{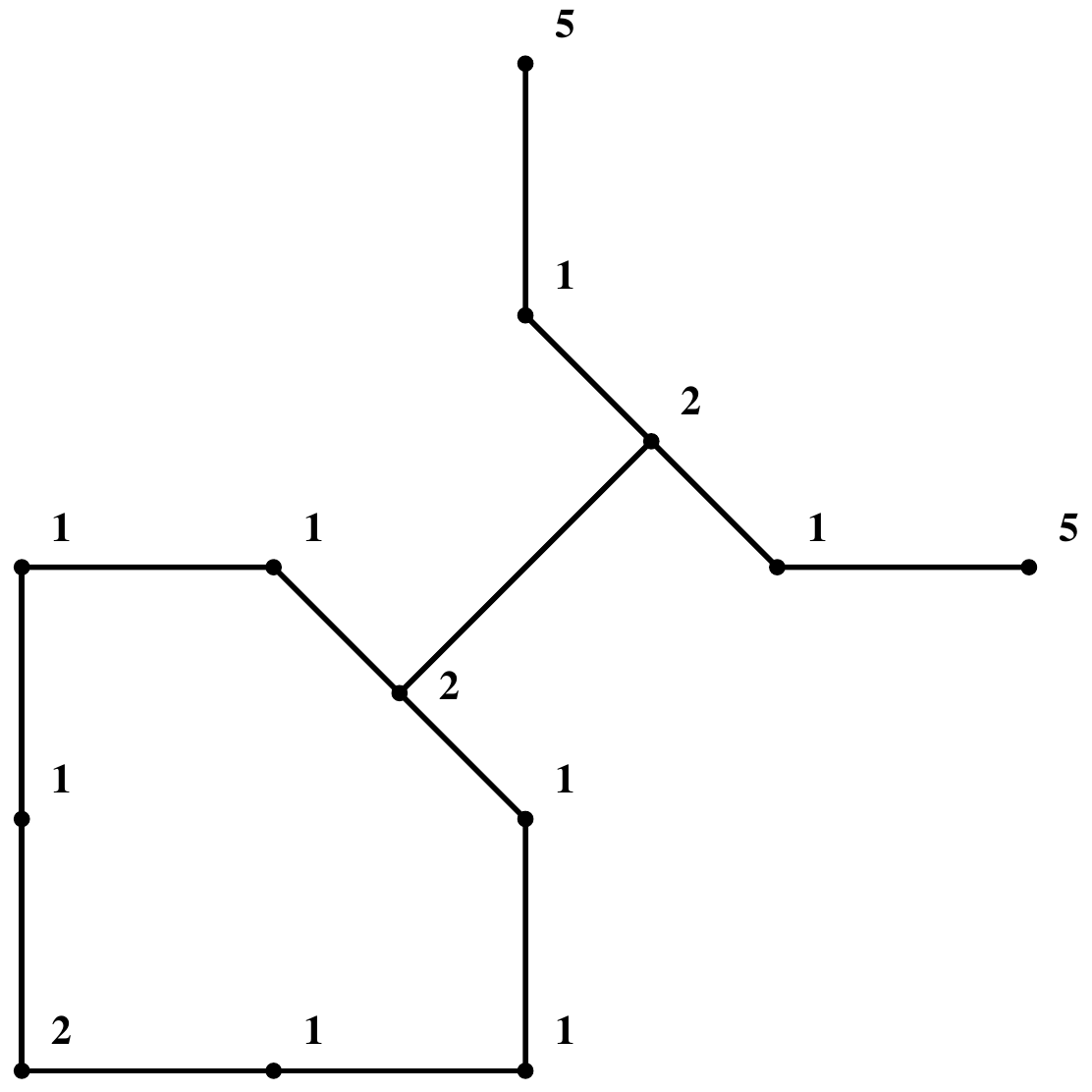}
\end{center}

\subsection{Graphs: Second Counterexample}\label{graphs2}

The previous example does not preclude the possibility that the monotonicity conjecture holds in the case $0<p_1<p_2=\infty$. We must therefore construct a different example to show that it also fails here. Let $G$ be the graph whose vertices and edges are given by
\begin{align*}
V&=\{a,b,c,d,e\},\\
E&=\{ab,ac,ad,ae,bc,bd,be,cd,ce\}.
\end{align*}
We can realize $G$ geometrically in $\RR^3$ as the $1$-skeleton of a triangular bipyramid. Let $X$ be the polytope of its geometric realization. At the cost of losing some symmetry, but simplifying the illustrations, we prefer to picture $X$ as embedded into the plane $\RR^2$:

\begin{center}
\includegraphics[width=170pt]{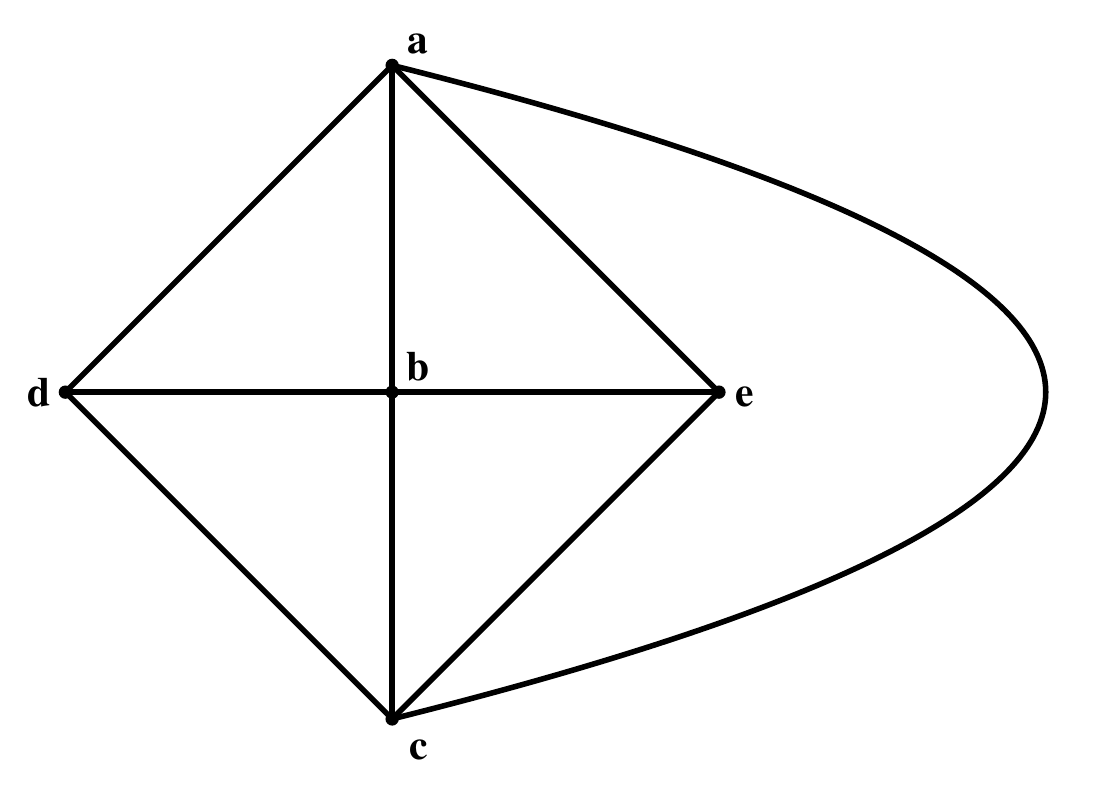}
\end{center}

We define a piecewise linear function $f:X\to[0,\infty)$ by specifying its values at the vertices:
\[
f(d)=f(e)=3\qquad\text{and}\qquad f(a)=f(b)=f(c)=1.
\]

\begin{center}
\includegraphics[width=170pt]{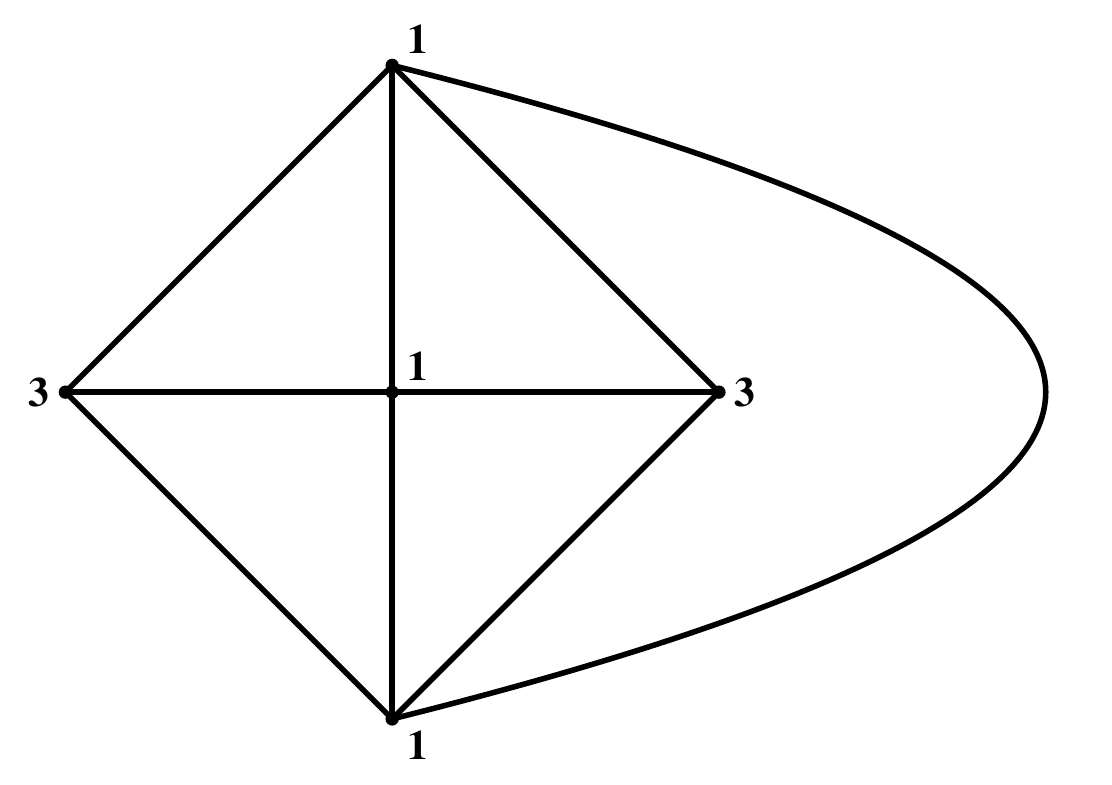}
\end{center}

We now calculate $\ucat$ and $\ucat^{\infty}$ for this function.

\begin{proposition}
The unimodal $\infty$-category of $f$ is $\ucat^{\infty}(f)=2$.
\end{proposition}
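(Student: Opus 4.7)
My plan is to prove $\ucat^{\infty}(f) = 2$ by establishing both inequalities. The lower bound $\ucat^{\infty}(f) \geq 2$ is immediate: $f$ itself is not unimodal, since for any $c \in (1, 3]$ the superlevel set $f^{-1}[c, \infty)$ is the disjoint union of a tripod at $d$ (portions of $ad, bd, cd$ near $d$) and a tripod at $e$ (portions of $ae, be, ce$ near $e$), which is disconnected and hence not contractible.

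For the upper bound $\ucat^{\infty}(f) \leq 2$, I would construct two unimodal piecewise linear functions $u_1, u_2 \colon X \to [0, \infty)$ with $\max(u_1, u_2) = f$. I would set $u_1(d) = 3$, $u_1(a) = u_1(b) = u_1(c) = 1$, $u_1(e) = 0$, take $u_1 = f$ on each of $ad, bd, cd$, and let $u_1$ decrease linearly from $1$ to $0$ on each of $ae, be, ce$. On each triangle edge $xy \in \{ab, ac, bc\}$ parameterized as $t \in [0,1]$ from $x$ to $y$, I would prescribe $u_1$ to be linear $1 \to 0$ on $[0, 1/4]$, linear $0 \to 1$ on $[1/4, 1/2]$, and constant $1$ on $[1/2, 1]$---i.e., a V-shaped dip on the ``$x$-half''. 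I would define $u_2$ by the mirror construction: swap the roles of $d$ and $e$ throughout, and place the dip of $u_2$ on the ``$y$-half'' of each triangle edge (constant $1$ on $[0, 1/2]$ and a V-dip centered at $t = 3/4$ on $[1/2, 1]$).

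The identity $\max(u_1, u_2) = f$ would follow from an edge-by-edge check: on $ad, bd, cd$ we have $u_1 = f$ and $u_2 \leq 1 \leq f$, and symmetrically on $ae, be, ce$; on each triangle edge, wherever $u_1$ drops below $1$ the function $u_2$ is identically $1$ by construction, and vice versa. To verify that $u_1$ is unimodal, I would analyze the superlevel sets $\{u_1 \geq c\}$: for $c \in (1, 3]$ this is a tripod at $d$; for $c \in (0, 1]$ it consists of the full tripod $ad \cup bd \cup cd$ together with various dangling segments sprouting from $a, b, c$. The crucial observation is that, on each triangle edge, the dip makes the edge contribute two disjoint dangles---a short one attached to $x$ and a long one attached to $y$---rather than the whole edge, which is precisely what breaks the cycles of the $K_4$-subgraph on $\{a, b, c, d\}$ that would otherwise appear. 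The superlevel set is therefore a tree, hence contractible, and $u_1$ is unimodal; the analogous argument handles $u_2$.

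The main obstacle is exactly this cycle issue: a naive construction making $u_1$ equal to $f$ on all of the triangle edges would force $\{u_1 \geq c\}$ to contain the whole $K_4$ on $\{a, b, c, d\}$ at levels $c \leq 1$, ruining contractibility. Breaking the triangle edges by V-dips removes those cycles from each superlevel set, while the asymmetric placement of $u_1$'s and $u_2$'s dips on opposite halves simultaneously guarantees that the pointwise maximum still equals $f$ everywhere.
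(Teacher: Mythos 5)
Your proof is correct and follows essentially the same strategy as the paper: both break the triangle edges $ab$, $ac$, $bc$ of the superlevel sets with a V-shaped dip in each of $u_1$ and $u_2$, positioned so the dips of the two functions do not overlap, thereby keeping $\max(u_1,u_2)=f$ while rendering every superlevel set a tree. The paper places its dips in a cyclically symmetric pattern (at $1/4$ from $a$ on $ab$, from $b$ on $bc$, from $c$ on $ca$) whereas you place both of $u_1$'s dips incident to $a$ on $ab$ and $ac$ near $a$, but this difference is inconsequential since the only requirements are that each triangle edge carry an interior dip for each $u_i$ and that the two dip regions be disjoint.
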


\begin{proof}
Clearly, $f$ is not unimodal. However, it has a unimodal $\infty$-decomposition of length $2$. To construct it, we first subdivide the edges $ab,ac$ and $bc$, by adding three points on each of them. Let $i,j,k$ be the vertices added on $ab$, so that it is now replaced by four edges $ai,ij,jk,kb$. Similarly, add vertices $p,q,r$ on $bc$ to subdivide it into $bp,pq,qr,rc$. Finally, add $x,y,z$ on the edge $ac$ to subdivide it into $cx,xy,yz,za$.

Now the decomposition $u_1,u_2$ can be defined by piecewise linear functions defined by the values on the vertices of this subdivision. Namely, take
\begin{align*}
&u_1(d)=3,\qquad u_1|_{\{a,b,c,j,k,q,r,y,z\}}\equiv1\qquad\text{and}\qquad u_1|_{\{e,i,p,x\}}\equiv0,\\
&u_2(e)=3,\qquad u_2|_{\{a,b,c,i,j,p,q,x,y\}}\equiv1\qquad\text{and}\qquad u_2|_{\{d,k,r,z\}}\equiv0.
\end{align*}
These have the desired properties: they are unimodal and $f=\max\{u_1,u_2\}$.
\end{proof}

\begin{proposition}
The unimodal category of $f$ is $\ucat(f)=3$.
\end{proposition}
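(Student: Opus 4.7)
The plan is to establish $\ucat(f)=3$ by proving each direction separately. For the upper bound $\ucat(f)\le 3$, I exhibit an explicit three-summand unimodal decomposition. Take $u_1$ to be the piecewise linear function on $X$ with $u_1(d)=3$, $u_1(a)=1$, and $u_1$ vanishing at the remaining vertices; $u_2$ symmetric with $u_2(e)=3$, $u_2(b)=1$, and $u_2$ vanishing at $a,c,d$; and $u_3$ the piecewise linear hat with $u_3(c)=1$ and $u_3=0$ at every other vertex. An edge-by-edge computation shows $u_1+u_2+u_3=f$, and each superlevel set $u_i^{-1}[r,\infty)$ is easily seen to be a tree in $X$, hence contractible.

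For the lower bound, I assume toward a contradiction that $f=u_1+u_2$ with each $u_i$ unimodal. Since $f(d)=f(e)=3$, after relabeling I may take $u_1(d)\ge 3/2$. For every $r>1$, $f^{-1}[r,\infty)$ consists of two disjoint contractible ``tripods'' at $d$ and $e$; the connected set $u_1^{-1}[r,\infty)$ must lie entirely in the component containing $d$, so letting $r\downarrow 1$ gives $u_1\le 1$ on the triangle $T=abc$, and symmetrically $u_2\le 1$ on $T$, with $u_1+u_2=1$ on $T$. The same argument shows $u_1^{-1}[1,\infty)\cap u_2^{-1}[1,\infty)=\emptyset$, whence $u_1(e),u_2(d)<1$ and so $u_1(d),u_2(e)>2$. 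Next, each $u_i$ must vanish somewhere on $T$: otherwise $T$ would lie in a contractible superlevel set of $u_i$, but the retraction $X\to T$ obtained by collapsing $d$ and $e$ along their spokes shows $T$ is $\pi_1$-essential in any subspace of $X$ containing it, a contradiction. An \emph{interior-peak} argument shows any such zero of $u_{3-i}$ on $T$ must occur at a vertex of $T$: an interior zero at $q$ in an edge $xy$ of $T$ would make $u_i(q)=1$ an isolated interior local maximum of $u_i$, whose component in $u_i^{-1}[1,\infty)$ could not reach the global maximum, since the only candidate connecting vertices $x,y$ satisfy $u_i<1$. After relabeling $a,b,c$, we may therefore assume $u_1(a)=1$ (so $u_2(a)=0$) and $u_1(c)=0$ (so $u_2(c)=1$).

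The proof finishes with a case analysis on $\beta:=u_1(b)\in[0,1]$. In each sub-case I single out a 3-cycle $\Delta$ in $X$ consisting of two triangle-vertices and one apex ($\Delta=ebc$ with $u_2$ if $\beta=0$, and $\Delta=abd$ with $u_1$ if $\beta=1$ or $\beta\in(0,1)$) and show the chosen $u_i$ is strictly positive throughout $\Delta$. Positivity at the three vertices is immediate from the case together with $u_1(d),u_2(e)>2$; positivity on the interior of each edge of $\Delta$ is forced by the same interior-peak argument as above, since an interior zero of $u_i$ along an edge of $\Delta$ would produce an interior local maximum of $u_{3-i}$ at value $f(q)>1$, and the superlevel set $u_{3-i}^{-1}[f(q),\infty)$ cannot connect this peak to the global maximum (every candidate vertex on an adjacent edge has $u_{3-i}<1<f(q)$), contradicting unimodality of $u_{3-i}$. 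Once $u_i>0$ throughout $\Delta$, we set $r=\min_\Delta u_i>0$ and obtain $\Delta\subseteq u_i^{-1}[r,\infty)$; since $X$ retracts onto each such triangle $\Delta$, the cycle $\Delta$ is $\pi_1$-essential in this superlevel set, contradicting its contractibility. The main obstacle is the clean formulation of this case analysis, especially making the interior-peak argument apply uniformly to edges both on $T$ and off $T$.
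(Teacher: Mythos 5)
Your proof is essentially correct, and it takes a genuinely different route from the paper for the lower bound (the upper bound, which the paper leaves to the reader, you rightly pin down explicitly, and the three-summand decomposition you give does work). For the lower bound, the paper avoids casework by a pigeonhole: among the six values $u_i(v)$, $v\in\{a,b,c\}$, at least three are $\geq\frac12$, so two lie in the same $u_i$, say $u_1(b),u_1(c)\geq\frac12$; from this one shows $u_1\geq\frac12$ on all of $bc$, so the forced zero of $u_1$ on the cycle $bcd$ lies in the interior of $bd\cup cd$, where $f>1$, and one gets the contradiction directly. You instead force a vertex zero of each $u_i$ on $T$, normalize $u_1(a)=1$, $u_1(c)=0$, and do a three-case analysis on $u_1(b)$, exhibiting in each case a $3$-cycle $\Delta$ on which one summand is strictly positive. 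Both rely on the same topological engine (an essential cycle in a contractible superlevel set), but your route is longer; the paper's pigeonhole collapses the casework.

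Two small imprecisions worth fixing. First, the claim $u_1^{-1}[1,\infty)\cap u_2^{-1}[1,\infty)=\emptyset$ and the consequent strict inequalities $u_1(e),u_2(d)<1$ are not justified (equality can occur); the non-strict versions $u_1(e)\leq1$, $u_2(d)\leq1$, $u_1(d)\geq2$, $u_2(e)\geq2$ suffice, since all you ever use later is ``$<f(q)$'' with $f(q)>1$. Second, in the interior-peak argument on an edge not contained in $T$, the threshold $u_{3-i}^{-1}[f(q),\infty)$ can miss the apex: $f(q)$ can exceed $2$, so the opposite apex (where you only know $u_{3-i}\geq2$) need not lie in that superlevel set. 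Replace the threshold by $r=\min\{f(q),2\}>1$ (as the paper does); then $q$ and the opposite apex both lie in $u_{3-i}^{-1}[r,\infty)$ but in different components of $f^{-1}[r,\infty)$, which gives the contradiction cleanly. Also note that ``any such zero of $u_{3-i}$ on $T$ must occur at a vertex'' is stated too strongly — there can certainly be interior zeros as well; what your argument actually yields, arguing contrapositively, is that $u_{3-i}$ has \emph{some} zero at a vertex of $T$, which is all you need.
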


\begin{proof}
A unimodal decomposition of length $3$ can easily be constructed explicitly so this is left to the reader.

It remains to show that there is no unimodal decomposition of length $2$. Again, we argue by contradiction. Suppose $f=u_1+u_2$ is such a decomposition. If $r>0$, define superlevel sets
\[
Q_1(r)=u_1^{-1}[r,\infty)\qquad\text{and}\qquad Q_2(r)=u_2^{-1}[r,\infty).
\]
Without loss of generality, we may assume that $u_1(d)\geq u_2(d)$, in other words, $u_1(d)\geq\frac32$. By unimodality, it follows that $u_1(e)\leq 1$ (otherwise $u_1(e)=r>1$ and the points $d$ and $e$ would have to lie in separate components of $Q_1(\min\{\frac32,r\})$). It follows that $u_2(e)\geq 2$. Using unimodality again, we have $u_2(d)\leq 1$ (otherwise $u_2(d)=r>1$ and the points $d$ and $e$ would have to lie in separate components of $Q_2(\min\{2,r\})$). It follows that $u_1(d)\geq 2$. Let $K$ be the subspace of $X$ consisting of the edges $ab,ac$ and $bc$.

Since $u_1(a)+u_2(a)=u_1(b)+u_2(b)=u_1(c)+u_2(c)=1$, at least three of the values $u_1(a),u_2(a),$ $u_1(b),u_2(b),$ $u_1(c),u_2(c)$ are $\geq\frac12$. Two of these three values necessarily correspond to the same function $u_i$, $i=1,2$. Therefore, without loss of generality (renaming the vertices and functions if necessary), we may assume that $u_1(b)\geq\frac12$ and $u_1(c)\geq\frac12$. Observe that this implies that $u_1(x)\geq\frac12$ for any point $x$ on the edge $bc$ (otherwise, we would have $u_2(b)\leq\frac12, u_2(c)\leq\frac12, u_2(x)=r>\frac12$ and $u_2(e)\geq 2$, contradicting unimodality as points $x$ and $e$ would lie in different components of $Q_2(r)$). But then, $u_1$ has at least one zero $z$ somewhere in the interior of the union of segments $bd$ and $cd$. (Otherwise $Q_1(r)$ would contain the whole cycle $bc,bd,cd$ for some $r>0$, contradicting unimodality.) This implies that $u_2(z)=r>1$, contradicting unimodality, as this means that $z$ and $e$ lie in different components of $Q_2(\min\{r,2\})$. This concludes the proof.
\end{proof}

Therefore, the monotonicity conjecture fails for $0<p_1<p_2=\infty$ as well.

\begin{remark}
Note that the basic idea underlying the proof is the fact that a cycle of odd length has chromatic number $3$. It remains an open question what exactly the connection between chromatic numbers and $\ucat$ is for general graphs.
\end{remark}

\subsection{Euclidean Plane: First Counterexample}\label{plane1}

In our construction of the counterexamples to the monotonicity conjecture on the two graphs above, we have exploited the fact that these graphs contain cycles. As we have also seen, the unimodal $p$-category is indeed monotone in $p$ for $X=\RR$. The question then arises whether it is essential that the space $X$ has non-trivial homology for such counterexamples to exist. We show that the answer to this question is also negative by constructing two counterexamples to the monotonicity conjecture in the Euclidean plane $X=\RR^2$.

The first counterexample we give is motivated by the first counterexample in the case when $X$ is a graph, so it bears some resemblance to it. For simplicity, whenever $p=(x,y)\in\RR^2$, we write $p^*=(y,x)$. We also adopt the convention that $\times$ binds more strongly than $\cup$. Let $a=(3,1)$ and define the following compact subset of $\RR^2$:
\[
K=[-1,1]\times[-1,1]\cup[-3,3]\times\{1\}\cup[1,3]\times\{-1\}\cup\{-3\}\times[1,3]\cup\{3\}\times[-3,-1].
\]

\hbox{}

\begin{center}
\includegraphics[width=170pt]{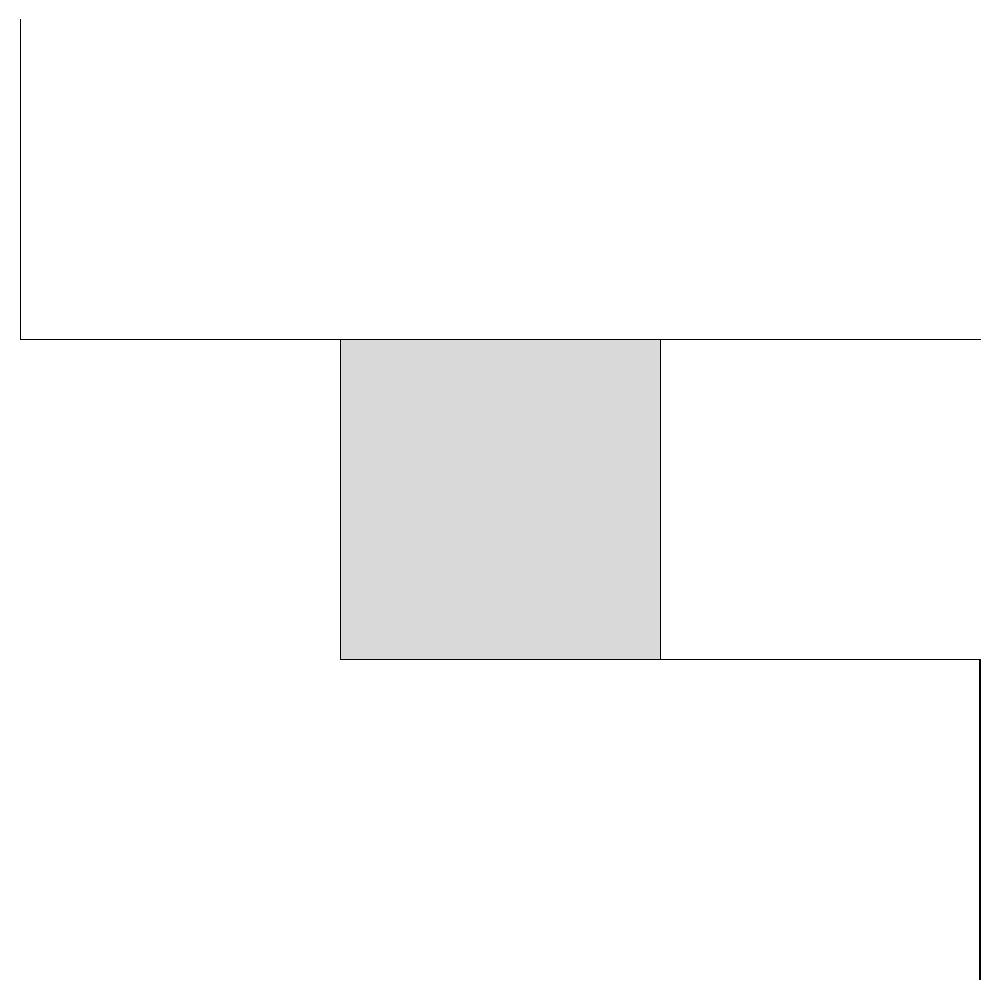}
\end{center}

\hbox{}

Our first counterexample can be concisely described using the $\infty$-distance to the set $K$. A more direct description is given in Appendix \ref{direct1}. We define the following functions $u_1,u_2,F,f:\RR^2\to[0,\infty)$:
\begin{align*}
u_1(p)&=\max\big\{0,1-d_\infty\big(p,K\big),5-5d_\infty\big(p,a\big)\big\},\\
u_2(p)&=u_1(p^*),\\
F(p)&=u_1(p)+u_2(p),\\
f(p)&=\sqrt{u_1(p)+u_2(p)}.
\end{align*}

For convenience, the following are the graphs of $u_1,u_2,F$ and $f$:

\hbox{}

\begin{center}
\includegraphics[width=170pt]{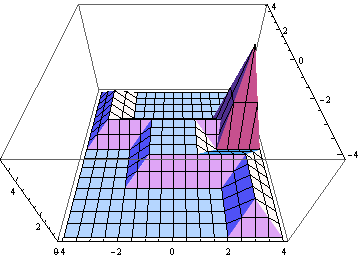}\qquad\includegraphics[width=170pt]{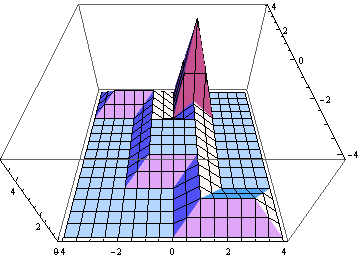}
\end{center}

\begin{center}
\includegraphics[width=170pt]{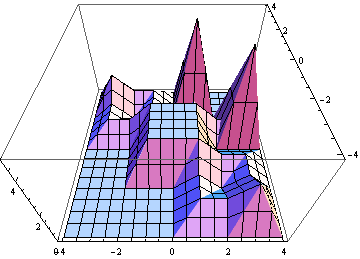}\qquad\includegraphics[width=170pt]{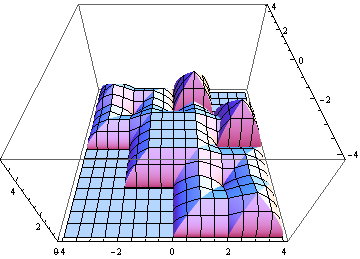}
\end{center}

\hbox{}

Note that each of these is continuous and compactly supported. Our main claim is the following:

\begin{proposition}\label{glavna}
The function $f$ is a counterexample to the conjecture, namely, $\ucat^2(f)=\ucat(F)=2$ and $\ucat(f)\geq3$.
\end{proposition}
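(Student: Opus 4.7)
The equalities $\ucat^2(f)=\ucat(F)=2$ are routine. To see $\ucat(F)=2$, I would verify directly that $u_1$ (and symmetrically $u_2$) is unimodal by examining its superlevel sets: for $c\in(0,1]$, $\{u_1\geq c\}$ is the $\ell^\infty$-thickening of the tree-like set $K$ glued to a small square around the peak $a$ (which sits in $K$) along a segment, hence contractible; for $c\in(1,5]$ it is just a small square around $a$. The sum $F$ has two isolated peaks of height $5$ at $a$ and $a^*$ with $F(2,2)=0$ in between, so $\ucat(F)\geq 2$, giving $\ucat(F)=2$, and then $\ucat^2(f)=\ucat(f^2)=\ucat(F)=2$ by Lemma~\ref{powers}.

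For $\ucat(f)\geq 3$ I would argue by contradiction, assuming $f=v_a+v_b$ with both summands unimodal. The first substep is \emph{peak separation}: one checks that $\{F\geq 5/4\}$ consists of three connected components, namely the square peak neighborhoods $\{d_\infty(\cdot,a)\leq 3/4\}$ and $\{d_\infty(\cdot,a^*)\leq 3/4\}$ isolated by the narrow corridors $[2,2.2]\times\{1\}$ and $\{1\}\times[2,2.2]$ (along which $F$ drops to $1$), together with a central component around $[-1,1]^2$. Picking $\rho$ slightly less than $\sqrt{5}/2$ so that $\{F\geq\rho^2\}$ retains this three-component structure, and using $v_a(a)+v_b(a)=\sqrt{5}$, I may assume $v_a(a)>\rho$; contractibility of $\{v_a\geq\rho\}\subseteq\{F\geq\rho^2\}$ then forbids $v_a(a^*)>\rho$, so $v_b(a^*)>\rho$.

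The second substep is \emph{loop zeros}. The boundaries $L_1=\partial([-3,-1]\times[1,3])$ and $L_2=\partial([1,3]\times[-3,-1])$ are Jordan curves bounding disks whose interiors contain zeros of $f$ (e.g.\ $f(-2,2)=f(2,-2)=0$). By the Jordan curve theorem, any simply connected planar subset that contains a Jordan curve also contains the disk it bounds; thus if $v_a$ were strictly positive on some $L_i$, the superlevel set $\{v_a\geq\epsilon\}$ would contain the interior of $L_i$ for some $\epsilon>0$, contradicting the interior zero of $f$. So $v_a$ (and similarly $v_b$) vanishes somewhere on each $L_i$, forcing the complementary function to reach value $\geq 1$ elsewhere on $L_i$ (as $f\geq 1$ on the loops).

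Finally the \emph{crossing contradiction}: I would choose Jordan arcs $\alpha\subseteq\{v_a\geq 1\}$ and $\beta\subseteq\{v_b\geq 1\}$ each joining a large-value point on $L_1$ to a large-value point on $L_2$ of the corresponding summand. Since $L_1$ sits in the upper-left and $L_2$ in the lower-right of the support, the four endpoints alternate cyclically around any large enclosing circle, so $\alpha$ and $\beta$ have interleaving endpoint pairs; a planar Jordan-curve argument inside the bounded support of $F$ then forces $\alpha$ and $\beta$ to meet at some point $p$, where $v_a(p),v_b(p)\geq 1$ and hence $f(p)\geq 2$. Since $\{f\geq 2\}=\{F\geq 4\}$ consists only of the tiny peak squares $\{d_\infty(\cdot,a)\leq 1/5\}$ and $\{d_\infty(\cdot,a^*)\leq 1/5\}$, which can be avoided by choosing endpoints far from the peaks and using the freedom in the choice of $\alpha,\beta$, one obtains $f(p)<2$ at the crossing, contradicting $v_a(p)+v_b(p)=f(p)$. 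The principal obstacle is precisely this last step: rigorously arranging the arcs so that the forced intersection lies outside both peak neighborhoods requires careful analysis of how $\{v_a\geq 1\}$ and $\{v_b\geq 1\}$ sit inside $\{F\geq 1\}$, which is a bounded planar set with two holes at the interiors of $D_1,D_2$ and two peninsulas at the peaks.
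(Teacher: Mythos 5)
The first part ($\ucat^2(f)=\ucat(F)=2$) is fine and matches the paper. The overall strategy for $\ucat(f)\geq 3$ — peak separation, zeros forced on loops around the two holes, then a planar crossing argument — is also the right conceptual frame, but two steps in the crossing argument are genuinely gappy, and you acknowledge one of them yourself.

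First, the interleaving is not established. Your arcs $\alpha\subseteq\{v_a\geq 1\}$ and $\beta\subseteq\{v_b\geq 1\}$ each join a point of $L_1$ to a point of $L_2$; that gives two endpoints on $L_1$ and two on $L_2$, and the claim that these ``alternate cyclically around any large enclosing circle'' does not follow. Going around such a circle you meet the two $L_1$-endpoints consecutively and then the two $L_2$-endpoints consecutively, so the $\alpha$-pair and $\beta$-pair need not interlace; they do so only if the $v_a$-level-1 point and the $v_b$-level-1 point alternate \emph{within} each loop in the right pattern, and nothing in your plan forces that. The paper resolves this by never working with arbitrary points of the loops: it uses the cut-point $a_5$ (resp.\ $b_5$) to show $v_a\leq 1$ on $Q\setminus A$, then pins down specific $1$-level points on $\partial C$ — one member of $\{c_9,c_{10}\}$ and one of $\{c_4,c_5\}$ for each of $v_a,v_b$, and crucially $v_a(c_1)=1$, $v_b(c_{13})=1$. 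Because $c_1,\{c_4,c_5\},\{c_9,c_{10}\},c_{13}$ appear in that cyclic order on the boundary of the $13$-gon $C$, the arc for $v_a$ (from a point in $\{c_9,c_{10}\}$ to $c_1$) and the arc for $v_b$ (from a point in $\{c_4,c_5\}$ to $c_{13}$) have interleaved endpoint pairs, and only then does the square-crossing lemma apply.

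Second, even granting a crossing, you cannot simply ``choose endpoints far from the peaks'' to avoid $A\cup B$. The set $\{v_a\geq 1\}$ may reach from one loop to the other only by passing through the peak square $A$ (indeed $v_a$ can exceed $1$ only there), so the arc $\alpha$ may have no choice but to traverse $A$, and a crossing inside $A$ or $B$ yields no contradiction since $f$ can be as large as $\sqrt{5}$ there. The paper does not try to avoid the peaks; it sends the arcs \emph{to} $c_1$ and $c_{13}$ (the mouths of $P_a$ and $P_b$), then applies the retractions $R_a,R_b$ collapsing the $D$-side and $E$-side appendages so that the resulting arcs lie entirely inside $C$, where $Q_a\cap Q_b\subseteq A\cup B$ forces the crossing to be impossible. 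This retraction step and the use of the cut-point structure of $Q$ (in particular $a_5$, $b_5$, $c_1$, $c_{13}$, $\{c_4,c_5\}$, $\{c_9,c_{10}\}$) are the missing ingredients: without them, the plan stops exactly where you flagged the ``principal obstacle,'' and that obstacle is the heart of the proof.
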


The rest of this section is devoted to proving this proposition. First, we make a straightforward observation regarding the nature of the functions we have defined. (More details are given in Appendix \ref{direct1}.)

\begin{observation}\label{firstpart}
The functions $u_1,u_2:\RR^2\to[0,\infty)$ are piecewise linear and unimodal. The function $F:\RR^2\to[0,\infty)$ is piecewise linear. It is not unimodal, but is by definition the sum of two unimodal functions.
\end{observation}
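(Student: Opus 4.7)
This observation bundles together three distinct claims: piecewise linearity of $u_1$, $u_2$, and $F$; unimodality of $u_1$ (and hence $u_2$ by symmetry); and non-unimodality of $F$. The plan is to dispatch these one at a time. Piecewise linearity is straightforward: since $K$ is a finite union of line segments and filled rectangles and $\{a\}$ is a point, both $p \mapsto d_\infty(p, K)$ (expressible as the pointwise minimum of the piecewise linear functions $d_\infty(\,\cdot\,, K_i)$ over the convex pieces $K_i$ of $K$) and $p \mapsto d_\infty(p, a) = \max(|x-3|, |y-1|)$ are piecewise linear. Consequently $u_1$, being a maximum of finitely many piecewise linear functions, is piecewise linear; $u_2 = u_1 \circ \sigma$ where $\sigma(x,y) = (y,x)$ is piecewise linear by precomposition with a linear involution; and $F = u_1 + u_2$ inherits piecewise linearity.

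For unimodality of $u_1$, I will show that $\{u_1 \geq c\}$ is contractible for each $c \in (0, 5]$ and empty for $c > 5$. First, $u_1$ attains its maximum value $5$ only at $a$, since $a \in K$ gives $5 - 5d_\infty(a, a) = 5$, while $1 - d_\infty(p, K) \leq 1 < 5$ always. For $c \in (1, 5]$ the superlevel set equals the closed $\infty$-ball $\{p : d_\infty(p,a) \leq 1 - c/5\}$, a rectangle, hence contractible. For $c \in (0, 1]$ it is the union $A_c \cup B_c$, where $A_c = \{p : d_\infty(p, K) \leq 1 - c\}$ is the $\infty$-thickening of $K$ and $B_c$ is the corresponding $\infty$-ball around $a$. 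The strategy here is to decompose $K$ into its five convex pieces (central square plus four segment-arms), note that their $\infty$-thickenings are rectangles, and directly compute the ten pairwise intersections to verify that, for $c$ in this range, only four are non-empty, so the nerve of the five-set cover of $A_c$ is a path. An additional short calculation shows that $B_c$ meets only the thickening of the top horizontal segment $[-3,3]\times\{1\}$. Thus $A_c \cup B_c$ admits a cover by six convex rectangles whose nerve is a tree and whose non-empty pairwise intersections are convex rectangles. An iterated application of the standard gluing principle---that a union of two contractible sets meeting in a contractible set is contractible---then yields contractibility of $A_c \cup B_c$. I expect this nerve bookkeeping to be the main obstacle, but the tree-like (acyclic) arrangement of $K$ is designed precisely to make the computation go through.

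Non-unimodality of $F$ is then easy. With $a^* = (1, 3)$, one computes $d_\infty(a^*, K) = 2$ and $d_\infty(a^*, a) = 2$, so $u_1(a^*) = 0$; hence $F(a) = u_1(a) + u_2(a) = 5 + u_1(a^*) = 5$, and symmetrically $F(a^*) = 5$. A local estimate shows $u_2$ vanishes on a small neighborhood of $a$ (both $d_\infty(p^*, K)$ and $d_\infty(p^*, a)$ exceed $1$ for $p$ close to $a$) and, symmetrically, $u_1$ vanishes near $a^*$. Therefore $F$ attains its maximum $5$ exactly on the two-point set $\{a, a^*\}$, so $\{F \geq 5\}$ is disconnected and $F$ cannot be unimodal. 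The final clause, that $F$ is by definition a sum of two unimodal functions, is then immediate.
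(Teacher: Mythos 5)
Your proof is correct and takes a genuinely different route from the paper's. The paper presents this statement as an unproved Observation and defers to the appendix, where $u_1$, $u_2$ and $F$ are re-specified as piecewise linear functions on an explicit polygonal decomposition of the support ($44$ vertices, $95$ edges, $52$ faces); with that description in hand, piecewise linearity is manifest and the unimodality claims can in principle be read off by exhaustive combinatorial inspection of the superlevel sets. You instead work directly from the closed-form $\infty$-distance definitions: for $c\in(0,1]$ you exhibit $\{u_1\geq c\}$ as a union of six axis-parallel rectangles (the $\infty$-thickenings of the five convex pieces of $K$ together with the $\infty$-ball about $a$), verify that only four of the ten pairwise intersections among the five $K$-thickenings are nonempty and that they form a path, check that the $\infty$-ball about $a$ meets only the thickening of the top arm $[-3,3]\times\{1\}$, and conclude that the six-set cover has a tree nerve with convex overlaps, hence a contractible union by iterated gluing. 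This replaces the paper's bookkeeping over $52$ polygons by a transparent convex-cover argument and makes visible precisely why $K$ was designed as an acyclic union of a square and four segments. For non-unimodality of $F$, your observation that $u_2$ vanishes on a neighborhood of $a$ (and $u_1$ near $a^*$) shows that $a$ and $a^*$ are isolated points of $F^{-1}[5,\infty)$; since that superlevel set is nonempty and disconnected, $F$ fails unimodality regardless of whether $5$ is the global maximum, although your local estimates also yield the stronger statement.
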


In the proofs we will make use of the following points (see the picture below):
\[
\begin{aligned}
a&=(3,1)\\
a_1&=a+(-\tfrac45,-\tfrac45)\\
a_2&=a+(\tfrac45,-\tfrac45)\\
a_3&=a+(\tfrac45,\tfrac45)\\
a_4&=a+(-\tfrac45,\tfrac45)\\
a_5&=a+(-\tfrac45,0)\\
b&=a^*\\
b_1&=a_1^*\\
b_2&=a_2^*\\
b_3&=a_3^*\\
b_4&=a_4^*\\
b_5&=a_5^*
\end{aligned}\qquad\begin{aligned}
c_1&=(2,1)\\
c_2&=(\tfrac32,\tfrac12)\\
c_3&=(\tfrac32,-\tfrac12)\\
c_4&=(2,-1)\\
c_5&=(1,-2)\\
c_6&=(\tfrac12,-\tfrac32)\\
c_7&=(-\tfrac32,-\tfrac32)\\
c_8&=c_6^*\\
c_9&=c_5^*\\
c_{10}&=c_4^*\\
c_{11}&=c_3^*\\
c_{12}&=c_2^*\\
c_{13}&=c_1^*
\end{aligned}\qquad\begin{aligned}
d&=(3,-3)\\
d_1&=(2,-3)\\
d_2&=(\tfrac52,-\tfrac72)\\
d_3&=(\tfrac72,-\tfrac72)\\
d_4&=(\tfrac72,-\tfrac52)\\
d_5&=(3,-2)\\
e&=(-3,3)\\
e_1&=d_1^*\\
e_2&=d_2^*\\
e_3&=d_3^*\\
e_4&=d_4^*\\
e_5&=d_5^*
\end{aligned}\qquad\begin{aligned}
z_0&=(2,-2)\\
z_1&=(3,-1)\\
z_2&=(1,-3)\\
w_0&=z_0^*\\
w_1&=z_1^*\\
w_2&=z_2^*\\
q_1&=(-1,-1)\\
q_2&=(1,-1)\\
q_3&=(1,1)\\
q_4&=(-1,1)
\end{aligned}
\]
Finally, we also need the following sets:
\[
\begin{aligned}
A&=\overline{a_1a_2a_3a_4},\\
B&=\overline{b_1b_2b_3b_4},\\
C&=\overline{c_1c_2c_3c_4c_5c_6c_7c_8c_9c_{10}c_{11}c_{12}c_{13}},\\
D&=\overline{d_1d_2d_3d_4d_5},\\
E&=\overline{e_1e_2e_3e_4e_5},\\
K&=aw_2e\cup q_1z_1d\cup\overline{q_1q_2q_3q_4},
\end{aligned}\qquad\begin{aligned}
P_a&=a_5c_1,\\
P_b&=b_5c_{13},\\
Z_1&=c_5z_1d_1,\\
Z_2&=c_4z_2d_5,\\
W_1&=c_9w_1e_1,\\
W_2&=c_{10}w_2e_5.
\end{aligned}
\]

The following two observations are straightforward, so we omit their proofs.

\begin{observation}
The superlevel set $Q=f^{-1}[1,\infty)=F^{-1}[1,\infty)$ can be expressed as follows:
\[
Q = A\cup B\cup C\cup D\cup E\cup P_a\cup P_b\cup Z_1\cup Z_2\cup W_1\cup W_2.
\]
\end{observation}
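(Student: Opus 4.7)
The first equality $f^{-1}[1,\infty)=F^{-1}[1,\infty)$ is immediate because $f=\sqrt{F}$ and $t\mapsto\sqrt{t}$ is monotone on $[0,\infty)$, so only the polygonal description of $F^{-1}[1,\infty)$ requires proof. The plan is a direct piecewise-linear verification, substantially shortened by the reflection symmetry $p\mapsto p^*$: since $u_2(p)=u_1(p^*)$ by definition, both $F$ and the proposed right-hand side are invariant under this involution (one checks $A^*=B$, $D^*=E$, $P_a^*=P_b$, $Z_i^*=W_i$, and $C^*=C$ directly from the listed coordinates), so it suffices to verify the identity on the closed half-plane $\{(x,y):y\le x\}$.

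On this half-plane I would first record, at each point $p$, which of the three affine expressions defining $u_1(p)$ and $u_2(p)$ is active. The set on which the spike of $u_1$ forces $u_1(p)\ge 1$ is exactly the closed $\infty$-ball of radius $\tfrac45$ about $a$, which is $A$, and analogously for the spike at $b$. Away from the two spikes one has $F(p)=\max\{0,1-d_\infty(p,K)\}+\max\{0,1-d_\infty(p,K^*)\}$, and on the overlap of the unit $\infty$-neighborhoods of $K$ and $K^*$
\[
F(p)\ge 1 \iff d_\infty(p,K)+d_\infty(p,K^*)\le 1,
\]
while outside this overlap $F(p)<1$. The two-dimensional region $C$ is the bulk of this overlap around the unit square and its attached arms, and the smaller regions $D$ and $E$ come from the secondary overlaps near the arm endpoints $(3,-3)$ and $(-3,3)$ where an arm of $K$ crosses an arm of $K^*$.

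The thin pieces are $1$-dimensional segments on which $F=1$ exactly and $F<1$ in every transverse direction; they link the bulk regions along lines where one summand is identically $1$ (on a face of $K$ or $K^*$, or on the boundary of a spike region) and the other is identically $0$. Thus $P_a$ is the segment of the line $y=1$ joining $c_1\in C$ to $a_5\in A$ along which $u_1\equiv 1$ and $u_2\equiv 0$; $Z_1$ and $Z_2$ join $C$ to $D$ along the outer edges of the arm of $K$ at $\infty$-distance exactly $1$ from $K^*$; and $W_1,W_2,P_b$ are the reflections of $Z_1,Z_2,P_a$.

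The main obstacle is purely combinatorial. The plane is cut by the finitely many axis-aligned and diagonal lines across which one of $d_\infty(\cdot,K)$, $d_\infty(\cdot,K^*)$, $d_\infty(\cdot,a)$, $d_\infty(\cdot,b)$ changes its affine formula, into polygonal cells on each of which $F$ is affine; hence the intersection of $\{F\ge 1\}$ with a given cell is either the whole cell, a half-plane intersected with the cell, a single edge, or empty. After the symmetry reduction, one walks through the relevant cells and matches the corners of the resulting polygons with the listed vertices $a_i,b_i,c_j,d_k,e_k$. The delicate point is ruling out any further $1$-dimensional pieces beyond the six listed, which is forced by the explicit piecewise-linear formula for $F$ combined with the L-shaped geometry of $K$ and $K^*$.
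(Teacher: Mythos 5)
The paper itself gives no argument here: it states immediately before this observation that ``the following two observations are straightforward, so we omit their proofs.'' So there is no proof in the paper to compare against, only the assertion. Your plan --- reduce by the reflection $p\mapsto p^*$ and then check cell by cell that $F\ge 1$ on the named polygons and segments and $F<1$ outside --- is the natural and correct framework, and your identification of $A,B$ as the spike $\infty$-balls and of $C,D,E$ as the two-dimensional overlap regions is right.

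The gap is exactly the step you flag as ``delicate'' but do not carry out, and carrying it out reveals that your sketch does not actually verify the statement as written. The one-dimensional connectors listed in the paper, $Z_1=c_5z_1d_1$, $Z_2=c_4z_2d_5$ (and their reflections $W_1,W_2$), are \emph{not} contained in $Q$: for instance the midpoint $(2,-\tfrac32)$ of $c_5z_1$ has $d_\infty(\cdot,K)=\tfrac12$ and $d_\infty(\cdot,K^*)=1$, so $u_1=\tfrac12$, $u_2=0$, and $F=\tfrac12<1$. Conversely $(\tfrac52,-1)$ lies on the $K$-arm $[1,3]\times\{-1\}$, has $u_1=1$, $u_2=0$, so $F=1$, yet it lies in none of the listed sets. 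The correct thin pieces are the L-shaped arcs $c_4z_1d_5$ (along the arm of $K$) and $c_5z_2d_1$ (along the arm of $K^*$), together with their reflections; the labels $c_4\leftrightarrow c_5$ and $d_1\leftrightarrow d_5$ appear to have been transposed in the paper's definitions of $Z_1,Z_2,W_1,W_2$. Your description also lumps both $Z_1$ and $Z_2$ as lying ``along the outer edges of the arm of $K$,'' when in fact one must lie on the arm of $K$ and the other on the arm of $K^*$ (they are exchanged by the reflection that swaps $D$ with itself). To turn the sketch into a proof you need to do the cell-by-cell check --- the $52$-cell decomposition in Appendix~\ref{direct1} makes this finite and concrete --- and in doing so you will be forced to detect and correct this discrepancy.
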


\hbox{}

\begin{center}
\includegraphics[width=170pt]{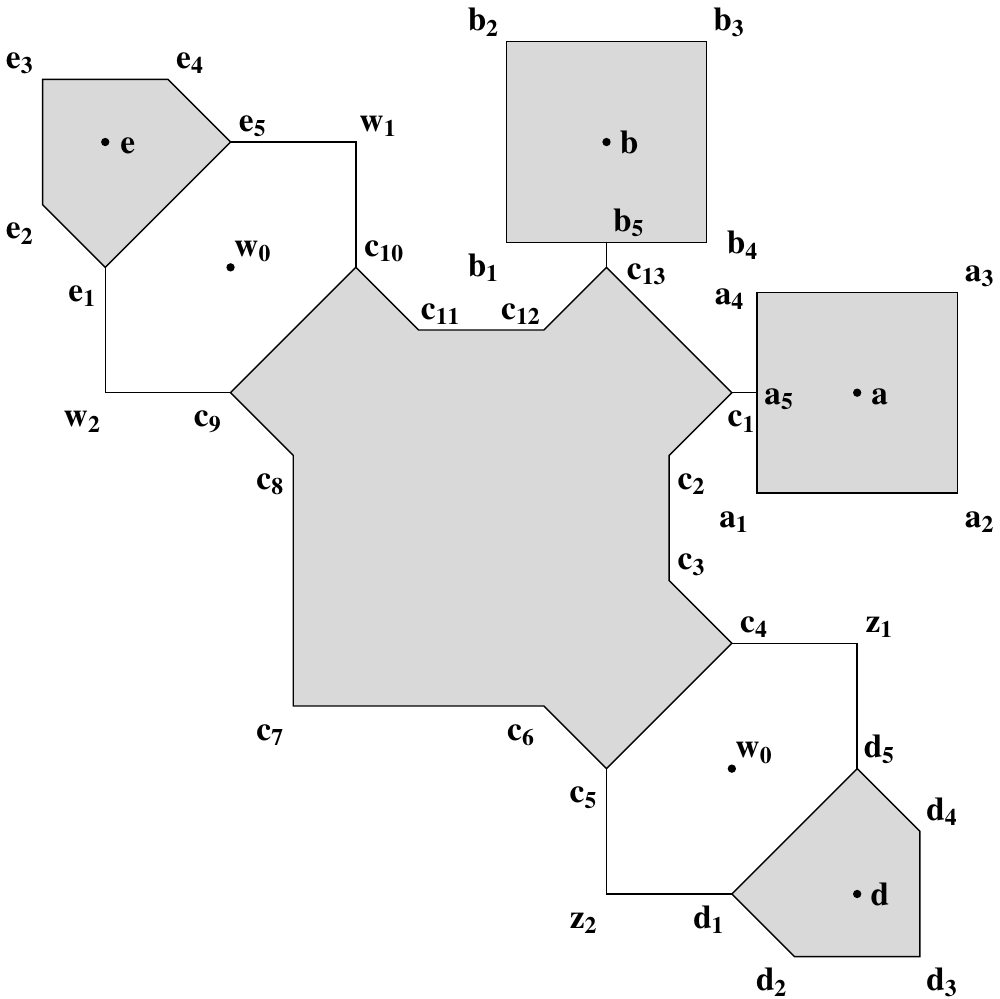}
\end{center}

\hbox{}

\begin{observation}\label{values}
The function $f$ has the following properties:
\begin{itemize}
\item $f(z_0)=f(w_0)=0$,
\item $f(a)=f(b)=\sqrt 5$,
\item $f(p)=1$ for all $p\in\partial Q$,\footnote{Here $\partial Q$ means the topological boundary of $Q$.}
\item $f(p)\leq\sqrt 2$ for all $p\notin A\cup B$.
\end{itemize}
\end{observation}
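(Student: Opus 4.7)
The plan is to verify each of the four bullet points by direct computation from the definitions of $u_1$, $u_2$, $F$, and the explicit description of $K$, exploiting the reflection symmetry $p\mapsto p^*$ (which swaps $u_1$ and $u_2$, and swaps $A\leftrightarrow B$, $z_0\leftrightarrow w_0$, $a\leftrightarrow b$) to halve the work.

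For the first bullet, I would compute $u_1(z_0)$ directly: $z_0=(2,-2)$ lies at $d_\infty$-distance $1$ from the points $(3,-2)\in\{3\}\times[-3,-1]\subseteq K$ and $(2,-1)\in[1,3]\times\{-1\}\subseteq K$, and one checks that no point of $K$ is closer, so $1-d_\infty(z_0,K)=0$; meanwhile $d_\infty(z_0,a)=\max\{|2-3|,|{-2}-1|\}=3$, so the peak term is negative. Hence $u_1(z_0)=0$. A symmetric check for $u_1(z_0^*)=u_1(-2,2)$ (nearest $K$-points $(-2,1)$ and $(-3,2)$, both at distance $1$; distance $5$ to $a$) gives $u_2(z_0)=0$, so $f(z_0)=0$, and $f(w_0)=0$ follows by $*$-symmetry.

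For the second bullet, I note that $a=(3,1)\in[-3,3]\times\{1\}\subseteq K$, so $5-5d_\infty(a,a)=5$ dominates in the max and $u_1(a)=5$. For $u_2(a)=u_1(b)$ with $b=(1,3)$, I would check each of the five pieces of $K$ and observe that the nearest is $(1,1)\in[-3,3]\times\{1\}$ at distance $2$, while $d_\infty(b,a)=2$; both the $K$-term and the peak term are $\leq -1$, so $u_1(b)=0$ and $f(a)=\sqrt{5+0}=\sqrt 5$, with $f(b)=\sqrt 5$ by symmetry. For the third bullet, I would invoke the standard topological fact that for a continuous function $F$ the boundary of the closed superlevel set $Q=F^{-1}[1,\infty)$ is contained in $F^{-1}(1)$: if $F(p)>1$, continuity yields a neighborhood of $p$ inside $Q$, so $p\notin\partial Q$. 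Hence $f(p)=\sqrt{F(p)}=1$ on $\partial Q$.

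The fourth bullet is the conceptual heart of the statement. The key observation is that the peak term $5-5d_\infty(p,a)$ contributes more than $1$ to $u_1$ exactly when $d_\infty(p,a)<4/5$, i.e.\ in the interior of the closed square $A=\overline{a_1a_2a_3a_4}$ (which by definition has ``radius'' $4/5$ around $a$). Since the $K$-term $1-d_\infty(p,K)$ never exceeds $1$, we get $u_1(p)\leq 1$ for all $p\notin A$, and symmetrically $u_2(p)\leq 1$ for $p\notin B$. Therefore $F(p)=u_1(p)+u_2(p)\leq 2$ off $A\cup B$, and $f(p)\leq\sqrt 2$. The only mildly tedious step is the case-check of $d_\infty(b,K)$ (and its $z_0$-analogue), which requires running through the five pieces defining $K$; none of the steps involve anything beyond arithmetic with $\infty$-distances, so I do not expect any serious obstacle.
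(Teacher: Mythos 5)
The paper omits this proof entirely, calling the observation ``straightforward,'' so there is no paper argument to compare against; your direct verification is exactly the kind of check the author has in mind. Your computations are correct: the first two bullets follow from evaluating the $\infty$-distances to the pieces of $K$ and to the peak $a$ (using $u_2(p)=u_1(p^*)$ to transfer to $u_1$); the third is the standard topological fact $\partial\bigl(F^{-1}[1,\infty)\bigr)\subseteq F^{-1}(1)$ (closedness gives $F\geq 1$ on the boundary, and $F>1$ would place the point in the interior); and the fourth is the observation that $A=\{p : d_\infty(p,a)\le \tfrac45\}$, so for $p\notin A$ the peak term $5-5d_\infty(p,a)$ is at most $1$ while the $K$-term is always at most $1$, giving $u_1(p)\leq 1$, and symmetrically $u_2(p)\leq 1$ off $B$. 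The proposal is complete and correct.
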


Before proceeding to the main proof, we require the following lemma.

\begin{lemma}\label{lemma}
Let $I=[0,1]$. Suppose $\gamma_1,\gamma_2:I\to I\times I$ are paths such that $\gamma_1(0)=(0,0)$, $\gamma_1(1)=(1,1)$, $\gamma_2(0)=(0,1)$ and $\gamma_2(1)=(1,0)$. Then $\gamma_1(I)\cap\gamma_2(I)\neq\emptyset$.
\end{lemma}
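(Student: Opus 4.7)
The plan is to reduce the statement to the Poincar\'e--Miranda theorem, a standard multidimensional analogue of the intermediate value theorem. Concretely, I would define a continuous map $F : I \times I \to \RR^2$ by
\[
F(s,t) = \gamma_1(s) - \gamma_2(t),
\]
so that intersection points of the two paths correspond exactly to zeros of $F$. Writing $F = (F_1, F_2)$ with $F_1(s,t) = \gamma_1^x(s) - \gamma_2^x(t)$ and $F_2(s,t) = \gamma_1^y(s) - \gamma_2^y(t)$, the prescribed endpoints translate directly into the Miranda sign conditions: on $s=0$, $F_1(0,t) = -\gamma_2^x(t) \leq 0$; on $s=1$, $F_1(1,t) = 1 - \gamma_2^x(t) \geq 0$; on $t=0$, $F_2(s,0) = \gamma_1^y(s) - 1 \leq 0$; on $t=1$, $F_2(s,1) = \gamma_1^y(s) \geq 0$ (using throughout that $\gamma_i(I) \subseteq I \times I$). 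Poincar\'e--Miranda then produces a zero of $F$, and we are done.

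If one prefers to avoid invoking Poincar\'e--Miranda by name, the same argument can be rephrased as a contradiction via an elementary degree computation. Assuming $\gamma_1(I) \cap \gamma_2(I) = \emptyset$, the map $F$ is nowhere zero, so $\widehat F := F/\|F\|_\infty$ gives a continuous map $I \times I \to \partial([-1,1]^2) \cong S^1$. The boundary analysis above forces the restriction $\widehat F|_{\partial(I \times I)} \to S^1$ to have degree $\pm 1$ (it can be traced explicitly by walking around the four edges), contradicting the fact that $\widehat F$ extends continuously over the contractible square $I \times I$.

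The main (and essentially only) obstacle is the routine bookkeeping of matching the endpoint data to the hypothesis of the chosen theorem; once the four sign conditions above are verified, the intersection is immediate. No finer structure of the paths (rectifiability, injectivity, etc.) is needed.
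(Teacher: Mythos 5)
Your Poincaré--Miranda argument is correct, and the sign conditions on all four faces check out exactly as you wrote them: $F_1(0,t)=-\gamma_2^x(t)\leq 0$, $F_1(1,t)=1-\gamma_2^x(t)\geq 0$, $F_2(s,0)=\gamma_1^y(s)-1\leq 0$, $F_2(s,1)=\gamma_1^y(s)\geq 0$, all using only $\gamma_i(I)\subseteq I\times I$. The paper, by contrast, does not give an argument at all: it simply cites Lemma~2 of Maehara's paper on the Jordan curve theorem, which is essentially a restatement of the present lemma (paths joining opposite sides of a rectangle must meet), and that lemma is proved there from the Brouwer fixed point theorem. So the two routes invoke logically equivalent topological input (Poincaré--Miranda $\Leftrightarrow$ Brouwer), but yours is self-contained whereas the paper's is a black-box citation; your version is therefore the more informative proof, at the cost of a few lines of bookkeeping. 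Your alternative degree-theoretic sketch is also sound in outline, though as stated it leaves the ``degree $\pm1$'' claim as an assertion; to make it airtight one should note that on each edge $\widehat F$ stays in a fixed closed half of $\partial([-1,1]^2)$, which is contractible, so $\widehat F|_{\partial(I\times I)}$ is homotopic rel corners to the standard quarter-turn map of degree one. The Poincaré--Miranda version avoids this and is the cleaner of your two arguments.
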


\begin{proof}
This follows directly from \cite[Lemma 2]{maehara} by taking $a=c=0$, $b=d=1$, $h=\gamma_1$ and $v=\gamma_2$.
\end{proof}

We can now prove our proposition.

\begin{proof}[Proof of Proposition \ref{glavna}]
The first part of the claim, that $\ucat^2(f)=2$, follows directly from Observation \ref{firstpart}.

To prove the second part, suppose for the sake of contradiction that there exists a decomposition $f=v_a+v_b$, where $v_a$ and $v_b$ are unimodal.\footnote{The notation is not meant to imply any relation with $a$ and $b$ at this point.} If $r>0$, define superlevel sets
\[
Q_a(r)=v_a^{-1}[r,\infty),\qquad\text{and}\qquad Q_b(r)=v_b^{-1}[r,\infty).
\]

We are especially interested in
\[
Q_a=Q_a(1)\qquad\text{and}\qquad Q_b=Q_b(1).
\]
Since $v_a$ and $v_b$ are unimodal, the sets $Q_a(r)$ and $Q_b(r)$ (where $r>0$) are all contractible. In particular, they are path-connected. If $r_1\leq r_2$, the inclusions
\[
Q_a(r_1)\supseteq Q_a(r_2),\qquad\text{and}\qquad Q_b(r_1)\supseteq Q_b(r_2)
\]
hold. Since $v_a,v_b\leq f$, we also have
\[
Q_a,Q_b\subseteq Q.
\]

By Observation \ref{values}, $f(p)\leq\sqrt2$ for $p\notin A\cup B$. If $v_a(p)\geq 1$ and $v_b(p)\geq 1$, we have $f(p)\geq 2$, so $p\in A\cup B$. This shows that $Q_a\cap Q_b\subseteq A\cup B$.

Observe that $v_a(a)+v_b(a)=\sqrt5$ holds. By interchanging $v_a$ and $v_b$, if necessary, we may assume without loss of generality that $v_a(a)\geq\frac{\sqrt5}2$. (Which is in fact the reason why we chose this notation for $v_a$ and $v_b$.)

\begin{step}
The following inequalities are satisfied:
\begin{itemize}
\item $v_a(p)\leq 1$ for all $p\in Q\setminus A$,
\item $v_b(p)\leq 1$ for all $p\in Q\setminus B$,
\item $v_a(a)\geq\sqrt5-1$ and
\item $v_b(b)\geq\sqrt5-1$.
\end{itemize}
\end{step}

\begin{proof}
To prove the first of these, it suffices to show that $Q_a(r)\subseteq A$ for all $r>1$. We know that $Q_a(r)\subseteq Q_a\subseteq Q$ holds. Note that $Q\setminus\{a_5\}$ is not path connected. The path component of $a$ in $Q\setminus\{a_5\}$ is a subset of $A$. Since $v_a(a_5)\leq f(a_5)=1$, the point $a_5$ is not contained in $Q_a(r)$. Therefore, the path component of $a$ in $Q_a(r)$ is also a subset of $A$ and we are done.

Since $v_a(b)+v_b(b)=\sqrt5$, this means that $v_b(b)\geq\sqrt5-1$. The fact that $v_b(p)\leq 1$ holds for all $p\in Q\setminus B$ is obtained by a symmetric argument. This also implies that $v(a)\geq\sqrt5-1$ holds.
\end{proof}

\begin{step}\label{points}
Let $S=\{c_9,c_{10}\}$ or $S=\{c_4,c_5\}$. Then there exist points $p,q\in S$ such that $v_a(p)=1$ and $v_b(q)=1$.
\end{step}

\begin{proof}
We show this for $S=\{c_4,c_5\}$, the other proof is symmetric. Note that $f(z_0)=0$, so $v_a(z_0)=v_b(z_0)=0$. For each $t\in(0,1)$ define $c_4(t)=(1-t)c_4+tc_3$ and $c_5(t)=(1-t)c_5+tc_6$. Define
\[
L(t) = D\cup Z_1\cup Z_2\cup c_4c_4(t)\cup c_4(t)c_5(t)\cup c_5(t)c_5.
\]
Note that $L(t)$ is compact, so $v_b$ attains its minimum in $L(t)$, say $v_b(p_t)=m_t$. But $m_t$ cannot be positive: if $m_t>0$, we would have $L(t)\subseteq Q_b(m_t)$ and $z_0\notin Q_b(m_t)$, implying the existence of a retraction $Q_b(m_t)\to L(t)$, which is impossible, since $Q_b(m_t)$ is contractible.

Therefore, $v_b(p_t)=0$, or since $f$ is at least $1$ on $L(t)$, $v_a(p_t)\geq 1$. But we know that $v_a(p)\leq 1$ holds for $p\notin A$, so we must have $v_a(p_t)=1$. This also implies that $p\in\partial Q$.

There are now two possibilities: if for some $t$ we obtain $p_t\in D\cup Z_1\cup Z_2$, we also have a path in $Q_a$ from $p_t$ to $a$, which must necessarily pass either through $c_4$ or $c_5$, since $p_t$ and $a$ lie in different path components of $Q\setminus\{c_4,c_5\}$. The only remaining possibility is that $p_t\in c_4c_4(t)\cup c_4(t)c_5(t)\cup c_5(t)c_5$ for all $t\in(0,1)$. Since $p_t\in\partial Q$, this means that $p_t\in c_4c_4(t)\cup c_5c_5(t)$ holds for all $t$. Therefore we may choose a convergent subsequence $(p_{t_n})_n$ of $(p_{\frac1k})_k$ such that $v_a(p_{t_n})=1$ for all $n$. This sequence converges either to $c_4$ or $c_5$, so one of $v_a(c_4)=1$, $v_a(c_5)=1$ must hold.

The proof that one of $v_b(c_4)=1$, $v_b(c_5)=1$ holds is symmetric.
\end{proof}

\begin{step}
The equalities $v_a(c_1)=1$ and $v_b(c_{13})=1$ hold.
\end{step}

\begin{proof}
This follows from the previous step. Let $S$ be any of the two sets from the previous step. Let $p,q\in S$ be points such that $v_a(p)=1$ and $v_b(q)=1$. There are paths in $Q_a$ and $Q_b$ from $a$ to $p$ and from $b$ to $q$, respectively. The first path must cross $c_1$, since $a$ and $p$ lie in different path components of $Q\setminus\{c_1\}$ and the second one must cross $c_{13}$, since $b$ and $q$ lie in different path components of $Q\setminus\{c_{13}\}$.
\end{proof}

\begin{step}
The results of the previous steps contradict each other.
\end{step}

\begin{proof}
Let $p\in\{c_9,c_{10}\}$ have the property that $v_a(p)=1$ and let $q$ be the unique element of $\{c_9,c_{10}\}\setminus\{p\}$. Let $q'\in\{c_4,c_5\}$ have the property that $v_b(q')=1$ and let $p'\in\{c_4,c_5\}\setminus\{q'\}$. These points exist by step \ref{points}. There is a path $\gamma_a:I\to Q_a$ from $p$ to $c_1$ and a path $\gamma_b:I\to Q_b$ from $q'$ to $c_{13}$.

Observe that $\gamma_a(I)\cap(B\cup P_b)=\emptyset$ and $\gamma_b(I)\cap(A\cup P_a)=\emptyset$, since $c_1\notin Q_b$ and $c_{13}\notin Q_a$. We can also ensure that $\gamma_a(I)\cap(E\cup W_1\cup W_2)=\{p\}$ and $\gamma_a(I)\cap(A\cup P_a)=\{c_1\}$ by cropping the path at both ends (formally, take $t_1=\sup\{t\in[0,1]\mid\gamma_a(t)=p\}$ and $t_2=\inf\{t\in[t_1,1]\mid\gamma_a(t)=c_1\}$ and reparametrize the restriction of $\gamma_a$ to $[t_1,t_2]$). In the same way we can ensure that $\gamma_b(I)\cap(D\cup Z_1\cup Z_2)=\{q'\}$ and $\gamma_b(I)\cap(B\cup P_b)=\{c_{13}\}$. Assume, therefore, without loss of generality that $\gamma_a$ and $\gamma_b$ have these properties.

Now, let $R_a:Q_a\to Q_a\setminus\big((D\cup Z_1\cup Z_2)\setminus\{p'\}\big)$ be the retraction that takes the points of $(D\cup Z_1\cup Z_2)\setminus\{p'\}$ to $p'$ and let $R_b:Q_b\to Q_b\setminus\big((E\cup W_1\cup W_2)\setminus\{q\}\big)$ be the retraction that takes the points of $(E\cup W_1\cup W_2)\setminus\{q\}$ to $q$. These retractions are well defined, since $q'\notin Q_a$ and $p\notin Q_b$. Note that $R_a\circ\gamma_a$ is a path in $Q_a\cap C$ and $R_b\circ\gamma_b$ is a path in $Q_b\cap C$ and these two paths have the same endpoints as $\gamma_a$ and $\gamma_b$, respectively.

Note that $\partial C$ is a Jordan curve and that $\partial C\setminus\{c_1,p\}$ has two path components, each of which contains exactly one of the points $c_{13},q'$. Using the Jordan-Schönflies theorem \cite[Chapter III]{bing}, we obtain the situation in which Lemma \ref{lemma} applies and we can conclude that the paths $R_a\circ\gamma_a$ and $R_b\circ\gamma_b$ intersect somewhere in $C$. But this is a contradiction, as we have already established that $Q_a\cap Q_b\subseteq A\cup B$.
\end{proof}

This contradiction shows that $\ucat(f)\geq 3$, which concludes the proof of our proposition.
\end{proof}

\begin{remark}
The homology of the space is trivial, but the first homology of the superlevel sets is not. This seems to be an essential feature of the counterexample, as it enables us to force certain values upon the functions in an unimodal decomposition. An explicit unimodal decomposition of length $3$ can be constructed for $f$, but we do not describe it here, as it has no bearing on the validity of the counterexample. Furthermore, by modifying the function values at the vertices, we can obtain such counterexamples for any pair $0<p_1<p_2<\infty$. We expect such counterexamples to exist on $\RR^m$ for any $m\geq2$.
\end{remark}

\subsection{Euclidean Plane: Second Counterexample}\label{plane2}

We need a different idea to show that the monotonicity conjecture fails for $X=\RR^2$ also in the case of $0<p_1<p_2=\infty$. The example we give is completely analogous to the second example in the case of graphs.

Let $d_0=(-4,0)$ and $e_0=(4,0)$. Define the following compact subset of $\RR^2$:
\[
K=[-4,4]\times\{0\}\cup\{-6,0\}\times[-6,6]\cup[-6,0]\times\{-6,6\}\cup\{-2,2\}\times[-4,4]\cup[-2,2]\times\{-4,4\}.
\]

This can be pictured as follows:

\hbox{}

\begin{center}
\includegraphics[width=170pt]{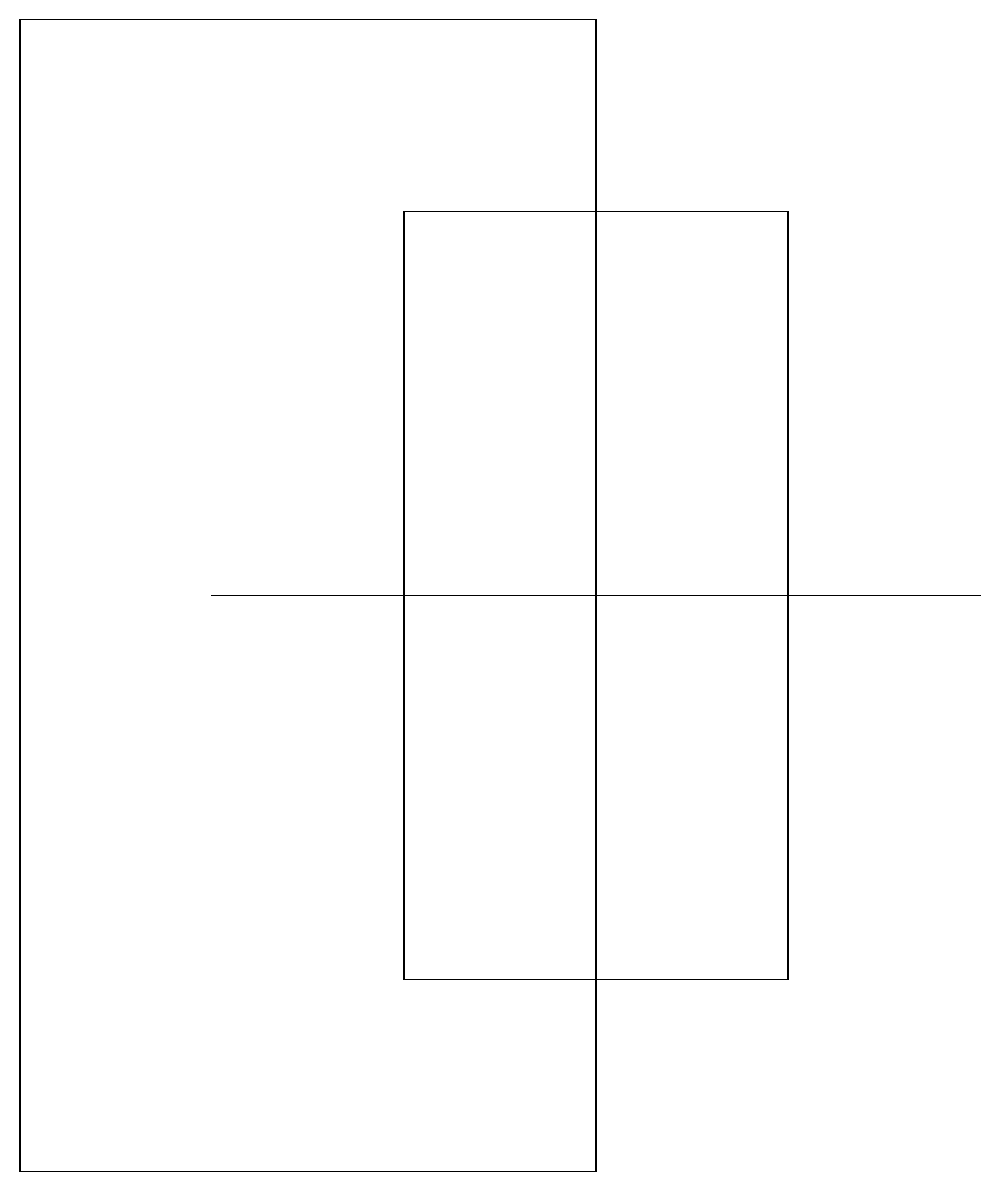}
\end{center}

\hbox{}

Define $f:\RR^2\to[0,\infty)$:
\begin{align*}
f(p)&=\max\big\{0,1-d_\infty\big(p,K\big),3-3d_\infty\big(p,d_0\big),3-3d_\infty\big(p,e_0\big)\big\}.
\end{align*}

Note that $f$ is continuous, compactly supported and piecewise linear. Further details of this latter fact are given in Appendix \ref{direct2}, where a more direct description of $f$ is given. Here is the graph of $f$:

\begin{center}
\includegraphics[width=170pt]{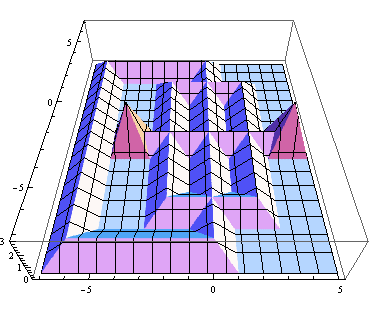}
\end{center}

Our main claim is the following:

\begin{proposition}\label{glavna_infty}
The function $f$ is a counterexample to the conjecture. Concretely, $\ucat^\infty(f)=2$ and $\ucat(f)=3$.
\end{proposition}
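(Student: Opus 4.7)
My plan is to adapt the bipyramid strategy of Section~\ref{graphs2} to the plane, using the same topological techniques that power Proposition~\ref{glavna}: peak-box contractibility analysis, Jordan--Sch\"onflies, and a careful case analysis on the cycle structure inside $f^{-1}[1,\infty)$. I would verify $\ucat^\infty(f)\le 2$, $\ucat(f)\le 3$, and $\ucat(f)\ge 3$ separately, with the lower bound being the main content.

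For the two upper bounds I would construct explicit piecewise linear decompositions directly analogous to the bipyramid case. For $\ucat^\infty(f)\le 2$, subdivide each of the three arcs between $(-2,0)$ and $(2,0)$ making up the small-rectangle theta structure (the middle segment $\alpha$, top arc $\beta$, bottom arc $\gamma$) and the big-rectangle loop attached at $(0,0)$ into four sub-arcs. Define $u_1$ with peak $3$ at $d_0$ and value $1$ on the end-adjacent sub-arcs (zero on the inner sub-arcs) of each arc, so that every positive superlevel $u_1^{-1}[r,\infty)$ is a contractible subtree of $K$ attached to the $d_0$-peak square; define $u_2$ symmetrically at $e_0$ on the complementary sub-arcs. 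Then $\max\{u_1,u_2\}=f$ by construction. For $\ucat(f)\le 3$, take $w_1,w_2$ as peak bumps of height $2$ at $d_0,e_0$ each reaching along one complementary arc, and $w_3=f-w_1-w_2$ a residual whose positive superlevels form a caterpillar tree through the remaining arc and the big rectangle.

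For $\ucat(f)\ge 3$, assume for contradiction that $f=v_a+v_b$ with both unimodal. The peak argument from Proposition~\ref{glavna} applies verbatim: since $f^{-1}[3/2,\infty)=B_\infty(d_0,1/2)\sqcup B_\infty(e_0,1/2)$ is a disjoint union of two squares, contractibility forces $Q_a(3/2)\subseteq B_\infty(d_0,1/2)$ and $Q_b(3/2)\subseteq B_\infty(e_0,1/2)$ (after a WLOG swap), hence $v_a(d_0)>3/2>v_b(d_0)$ and $v_b(e_0)>3/2>v_a(e_0)$. The main topological input is that $\supp v_a$ is a simply connected open subset of $\RR^2$ (any loop in $\supp v_a$ has compact image so lies in $v_a^{-1}[c,\infty)$ for some $c>0$, which is contractible): if it contained a Jordan curve $\sigma$, then by Jordan--Sch\"onflies the entire bounded component of $\RR^2\setminus\sigma$ would lie in $\supp v_a\subseteq\supp f$. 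Applied to each of the cycles $\beta\cup\gamma$ (the small rectangle), $\alpha\cup\beta$, $\alpha\cup\gamma$, and the big-rectangle loop, this fails because each bounded component contains points at $\ell^\infty$-distance at least $1$ from $K$ (e.g.\ $(\pm 1,2)$ in the upper half of the small rectangle and $(-3,3)$ inside the big rectangle), which lie outside $\supp f$. Consequently $\supp v_a$ contains at most one of the three arcs $\alpha,\beta,\gamma$ entirely (and likewise for the big-rectangle loop), and the same holds for $\supp v_b$. Since $\supp v_a\cup\supp v_b=\supp f$ must cover all three arcs, a case analysis on which arcs are fully covered by which summand, combined with the peak constraints, produces (as in the bipyramid) a configuration in which one of the contractible superlevels $Q_a(c),Q_b(c)$ is forced to contain a complete non-trivial cycle of $K$, the desired contradiction.

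The main obstacle is the closing case analysis. In the graph bipyramid this reduces to one-dimensional cut-point arguments on the triangle and gives an immediate contradiction; in the plane one must enumerate the possible ways that two simply connected open sets $\supp v_a,\supp v_b$ can cover the three arcs and the big-rectangle loop while respecting the constraint that neither completes a cycle, and then use the peak heights $v_a(d_0),v_b(e_0)>3/2$ together with Lemma~\ref{lemma}-style Jordan arguments inside $f^{-1}[c,\infty)$ to rule out every such configuration. The combinatorial bookkeeping of these cases is the bulk of the technical work.
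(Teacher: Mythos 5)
Your general strategy is sound, and your key idea for the lower bound — that $\supp v_a$ must be a simply connected open subset of $\RR^2$, so by Jordan--Schönflies it cannot contain any Jordan curve of $K$ whose interior exits $\supp f$ — is a genuinely different route from what the paper does, and the topological claim itself is correct (a loop in $\supp v_a$ has compact image, hence lies in a positive superlevel set, which is contractible). However, you stop exactly where the hard work begins. You correctly reduce to the statement that each of $\supp v_a$, $\supp v_b$ can contain at most one of the three arcs $\alpha,\beta,\gamma$ and cannot contain the big-rectangle loop, and you correctly note that the peak constraints at $d_0,e_0$ and the requirement $\supp v_a\cup\supp v_b=\supp f$ must then be played off against each other, but you do not actually produce the contradiction: you describe the remaining step as ``combinatorial bookkeeping'' that is ``the bulk of the technical work'' and leave it at that. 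That missing case analysis is precisely the content of the proposition, so as written the argument is not complete. It is also not obvious that your enumeration would be shorter than the paper's argument, since the constraint ``does not contain an entire arc'' still leaves a lot of freedom for how the two supports overlap along each arc and along the big rectangle.

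For comparison, the paper closes the lower bound by working inside the 1-complex $Q=f^{-1}[1,\infty)$ and using path-connectedness of the contractible superlevel sets $Q_1(1),Q_2(1)$ directly: after the same WLOG normalization $u_1(d_0)\ge 2$, $u_2(e_0)\ge 2$ that you obtain, it observes that each $u_i$ must vanish somewhere on the big-rectangle cycle (else $H_1$ would be nontrivial in a contractible set), so each $Q_i(1)$ meets that cycle, and every path in $Q$ from $d_0$ (resp.\ $e_0$) to the big rectangle must pass through one of the cut points $a=(0,4),\,b=(0,0),\,c=(0,-4)$. This pins down $u_1(q_1)=u_2(q_2)=1$ for distinct $q_1,q_2\in\{a,b,c\}$, and then a second application of the same vanishing-on-a-cycle argument to two auxiliary circles $Z_1,Z_2$ forces both $u_1(q_3)=1$ and $u_2(q_3)=1$ at the third point $q_3$, contradicting $f(q_3)=1$. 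This argument is constructive and finite; yours would need to be brought to the same level of explicitness to count as a proof. Your upper bound constructions are described only at a conceptual level; they look plausible and match the spirit of the paper's explicit decompositions in the appendix, but they would likewise need to be written down and checked.
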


The rest of this section is devoted to proving this proposition. Computing $\ucat^{\infty}(f)$ is completely analogous to the second counterexample in the case of graphs. For this reason, we prefer to omit the proof of the following observation here (for a detailed construction, see Appendix \ref{direct2}). At this point, we simply note that $f$ is not unimodal and that it is straightforward to construct a unimodal $\infty$-decomposition of length $2$.

\begin{observation}\label{ucat2}
The unimodal $\infty$-category of $f$ is $\ucat^{\infty}(f)=2$.
\end{observation}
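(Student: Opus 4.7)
The plan is to mirror the proof of the second graph counterexample (Section~\ref{graphs2}), invoking topological bookkeeping techniques from the proof of Proposition~\ref{glavna}. Assume for contradiction that $f = v_a + v_b$ is a unimodal decomposition, and write $Q_a(r) = v_a^{-1}[r,\infty)$ and $Q_b(r) = v_b^{-1}[r,\infty)$; by unimodality these are contractible subsets of $f^{-1}[r,\infty)$. First, for $r > 1$ the superlevel set $f^{-1}[r,\infty)$ is a disjoint union of two small $\infty$-balls around $d_0$ and $e_0$, so no contractible $v_i$-superlevel set at threshold exceeding $1$ can meet both peaks. Combined with $v_a(d_0) + v_b(d_0) = v_a(e_0) + v_b(e_0) = 3$, this forces, after possibly swapping $v_a$ and $v_b$, that $v_a(d_0) \geq 2$, $v_b(e_0) \geq 2$, $v_a(e_0) \leq 1$ and $v_b(d_0) \leq 1$.

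Next, I will exploit the two essential loops in $K$: the outer rectangle $R_o$ enclosing $d_0$, and the inner rectangle $R_i$ enclosing the middle of the connecting segment (but neither peak). A sufficiently thin neighborhood of $R_i$ in $f^{-1}[r,\infty)$ (for $r$ slightly less than $1$) is an annulus, so $R_i$ represents a nontrivial class in $H_1$ of this neighborhood; the same holds for $R_o$. Consequently neither $Q_a(r)$ nor $Q_b(r)$ can contain all of $R_i$, nor all of $R_o$, since each is contractible. On $R_i$ one has $f \equiv 1$, hence $v_a + v_b \equiv 1$; among the four attachment points $(\pm 2, 0)$ and $(0, \pm 4)$ a pigeonhole argument produces two points on which (WLOG) $v_a \geq 1/2$, and a unimodality-of-$v_b$ argument analogous to the ``$u_1 \geq 1/2$ on edge $bc$'' step from the graph proof then propagates $v_a \geq 1/2$ along an arc $\alpha$ of $R_i$ connecting them.

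Finally, the contradiction will arise by combining this arc $\alpha$ with the constraints imposed by the peaks. Since $Q_a(r)$ must contain $d_0$ (for $r \leq 2$) and cannot cover $R_o$ entirely, there must be a point of $R_o$ where $v_a$ is small, forcing $v_b$ to be essentially equal to $f$ there; but then, using the separation property that $R_i$ disconnects $K$ into the $d_0$-part, the $e_0$-part and a middle region, no contractible $v_b$-superlevel set can join this point to $e_0$, giving the required contradiction. The main obstacle is making these planar topological claims precise: the candidate cycles in $K$ are contractible in $\RR^2$, so one must carefully verify that they are not contractible within $f^{-1}[r,\infty)$, and that the forced zeros lie in the topologically correct region. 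This will be achieved by a Jordan--Sch\"onflies-type argument together with a path-crossing lemma in the spirit of Lemma~\ref{lemma}, exactly as in the proof of Proposition~\ref{glavna}.
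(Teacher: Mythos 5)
Your proposal proves the wrong statement. Observation~\ref{ucat2} asserts $\ucat^{\infty}(f)=2$, which requires two things: an explicit \emph{$\infty$-decomposition} $f=\max\{u_1,u_2\}$ with $u_1,u_2$ unimodal (the upper bound, and the substantive content), together with the trivial observation that $f$ is not unimodal (the lower bound $\ucat^\infty(f)\geq 2$, clear since $f^{-1}[r,\infty)$ is disconnected for $r$ close to $3$). What you actually attempt is a proof by contradiction starting from the assumption that $f=v_a+v_b$ is a unimodal \emph{additive} decomposition and deriving a contradiction. That argument, if completed, would establish $\ucat(f)\geq 3$ --- which is Proposition~\ref{ucat3}, a different statement --- and it does not touch $\ucat^\infty$ at all. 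Note also that if one were to read your contradiction argument as applying to an $\infty$-decomposition $f=\max\{v_a,v_b\}$, the conclusion would be $\ucat^\infty(f)\geq 3$, which is \emph{false} and contradicts the very observation you are asked to prove; several of your early steps (e.g.\ deducing $v_a(d_0)\geq 2$ from $v_a(d_0)+v_b(d_0)=3$) use the additive relation and fail for $\max$.

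The paper's actual proof (given in Appendix~\ref{direct2}) is entirely constructive: it defines piecewise-linear functions $v_1,v_2$ supported over the left-hand and right-hand halves of $\supp f$ respectively, each taking value $3$ at one peak and value $1$ over the rest of its support, checks that $\max\{v_1,v_2\}=f$, and then subtracts small bump functions over six rectangles to kill the cycle that the outer square $k_1k_2k_3k_4$ creates in the superlevel sets, producing genuinely unimodal $u_1,u_2$ with $\max\{u_1,u_2\}=f$. No contradiction argument is involved. To fix your proposal, you should (i) drop the contradiction framework entirely, (ii) supply such a construction of an explicit length-$2$ $\infty$-decomposition, and (iii) record that $f$ is not unimodal to get $\ucat^\infty(f)\geq 2$. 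Your present argument, suitably tightened, could instead be offered as an alternative proof of Proposition~\ref{ucat3}.
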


Proving that $\ucat(f)=3$ is also very similar as in the graph case. Most of the action takes place in the sublevel set $Q=f^{-1}[1,\infty)$, so before beginning the proof, we describe it explicitly. The notation we use is similar as in the first counterexample in $\RR^2$. In addition to $d_0=(-4,0)$ and $e_0=(4,0)$, we define the following points:
\[
\begin{aligned}
d_1&=(-\tfrac{14}3,-\tfrac23),\\
d_2&=(-\tfrac{10}3,-\tfrac23),\\
d_3&=(-\tfrac{10}3,\tfrac23),\\
d_4&=(-\tfrac{14}3,\tfrac23),\\
d_5&=(-2,-4),\\
d_6&=(-2,0),\\
d_7&=(-2,4),
\end{aligned}\qquad\begin{aligned}
e_1&=(\tfrac{14}3,-\tfrac23),\\
e_2&=(\tfrac{10}3,-\tfrac23),\\
e_3&=(\tfrac{10}3,\tfrac23),\\
e_4&=(\tfrac{14}3,\tfrac23),\\
e_5&=(2,-4),\\
e_6&=(2,0),\\
e_7&=(2,4),
\end{aligned}\qquad\begin{aligned}
a&=(0,4),\\
b&=(0,0),\\
c&=(0,-4),\\
k_1&=(0,-6),\\
k_2&=(-6,-6),\\
k_3&=(-6,6),\\
k_4&=(0,6).
\end{aligned}
\]

Next, we define the following sets:
\begin{align*}
D=\overline{d_1d_2d_3d_4},\quad E=\overline{e_1e_2e_3e_4},\quad P=d_5e_5e_7d_7d_5,\quad R=d_0e_0,\quad K=k_1k_2k_3k_4k_1.
\end{align*}
We can now give a simple description of $Q$.

\begin{observation}
The superlevel set $Q=f^{-1}[1,\infty)$ can be expressed as follows:
\[
Q = D\cup E\cup P\cup R\cup K.
\]
\end{observation}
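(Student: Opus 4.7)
The plan is to unwind the max in the definition of $f$: for a point $p \in \RR^2$, we have $f(p) \geq 1$ if and only if at least one of the three nontrivial terms
\[
1 - d_\infty(p, K_{\text{old}}), \qquad 3 - 3 d_\infty(p, d_0), \qquad 3 - 3 d_\infty(p, e_0)
\]
is $\geq 1$, where $K_{\text{old}}$ denotes the compact set called $K$ in the definition of $f$ (not to be confused with the newly introduced $K = k_1 k_2 k_3 k_4 k_1$). Consequently, $Q$ splits as the union of three pieces, one per term, and the goal reduces to identifying each piece with a subset on the right-hand side.

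For the second term, $3 - 3 d_\infty(p, d_0) \geq 1$ is equivalent to $d_\infty(p, d_0) \leq \tfrac{2}{3}$, i.e., $p$ lies in the closed $\ell^\infty$-ball of radius $\tfrac{2}{3}$ around $d_0 = (-4,0)$. This ball is the axis-aligned rectangle $[-\tfrac{14}{3}, -\tfrac{10}{3}] \times [-\tfrac{2}{3}, \tfrac{2}{3}]$, whose corners are precisely $d_1, d_2, d_3, d_4$; hence this contribution is exactly $D$. Symmetrically, the third term contributes exactly $E$.

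For the first term, $1 - d_\infty(p, K_{\text{old}}) \geq 1$ simplifies to $p \in K_{\text{old}}$, so the remaining step is to match the five pieces in the definition of $K_{\text{old}}$ against $R$, $K$, and $P$: the segment $[-4, 4] \times \{0\}$ runs from $d_0$ to $e_0$ and so equals $R$; the pair $\{-6, 0\} \times [-6, 6]$ together with $[-6, 0] \times \{-6, 6\}$ forms the boundary of $[-6, 0] \times [-6, 6]$, which is the quadrilateral $k_1 k_2 k_3 k_4 k_1 = K$; and the pair $\{-2, 2\} \times [-4, 4]$ together with $[-2, 2] \times \{-4, 4\}$ forms the boundary of $[-2, 2] \times [-4, 4]$, which is the quadrilateral $d_5 e_5 e_7 d_7 d_5 = P$. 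Assembling the three contributions yields $Q = D \cup E \cup P \cup R \cup K$, as claimed. Since every step is pure definition-unwinding, the only real work is careful bookkeeping of coordinates and avoiding the notational clash between the two uses of the letter $K$.
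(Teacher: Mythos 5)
Your verification is correct and is precisely the bookkeeping the paper omits: the observation is stated in the paper without proof (as "straightforward"), and your direct unwinding of the maximum — identifying the $d_0$-term with the filled $\ell^\infty$-ball $D$, the $e_0$-term with $E$, and the incidence set of the $K_{\mathrm{old}}$-term with $R \cup K \cup P$ — is exactly what a reader would supply. You handle the paper's overloaded use of $K$ cleanly, and you correctly track the distinction between filled quadrilaterals $\overline{d_1d_2d_3d_4}$ and cycle-of-segments notation $k_1k_2k_3k_4k_1$, so $P$ and $K$ remain one-dimensional boundaries as required.
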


\hbox{}

\begin{center}
\includegraphics[width=170pt]{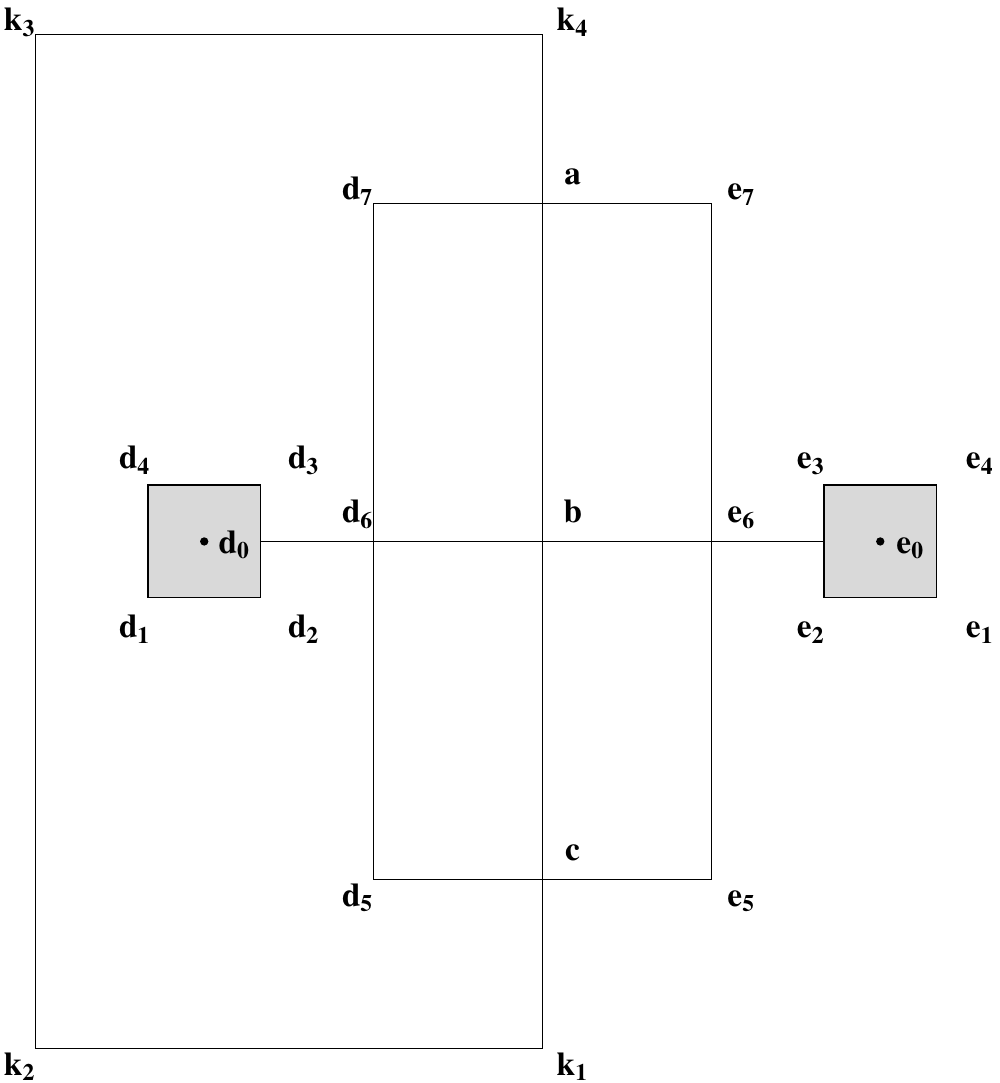}
\end{center}

\hbox{}

We are now ready to compute the unimodal category of $f$. The proof in the graph case relied on the fact that $a,b,c$ are local cut points of the graph and the various superlevel sets they appear in. In the case of $\RR^2$, which has no local cut points, a more careful case by case analysis is required.

\begin{proposition}\label{ucat3}
The unimodal category of $f$ is $\ucat(f)=3$.
\end{proposition}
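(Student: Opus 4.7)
The plan is to adapt the proof of the analogous second graph counterexample, with careful attention to the richer topology of $Q$ in the planar setting.

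For the upper bound $\ucat(f)\leq 3$, I would exhibit an explicit piecewise-linear decomposition $f=v_1+v_2+v_3$ after suitably subdividing the edges of $K$: one summand is supported on the upper half of the inner rectangle together with the box $D$, another on the lower half together with $E$, and the third absorbs the central line $R$ and the outer rectangle $K$. This parallels the subdivision construction used for the $\ucat^\infty$-decomposition in Observation \ref{ucat2}.

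For the lower bound, suppose for contradiction that $f=u_1+u_2$ with $u_1,u_2$ unimodal, and write $Q_i(r)=u_i^{-1}[r,\infty)$. Interchanging $u_1$ and $u_2$ if necessary, assume $u_1(d_0)\geq \tfrac{3}{2}$. The first key step is to show $u_1(d_0)\geq 2$ and $u_2(e_0)\geq 2$: since $f^{-1}(1,\infty)=\operatorname{int}(D)\sqcup\operatorname{int}(E)$ is disconnected while each $Q_i(r)$ is contractible, the assumption $u_1(e_0)>1$ would force the contractible set $Q_1(\min\{\tfrac{3}{2},u_1(e_0)\})$ to contain both $d_0$ and $e_0$ while lying inside $\operatorname{int}(D)\sqcup\operatorname{int}(E)$, a contradiction. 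Hence $u_1(e_0)\leq 1$, so $u_2(e_0)\geq 2$; symmetrically $u_2(d_0)\leq 1$ and $u_1(d_0)\geq 2$.

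Next I would apply pigeonhole to the equator points $a,b,c$, each of which satisfies $f=1$ so that $u_1+u_2=1$ there. Since $\sum_{p\in\{a,b,c\}}(u_1(p)+u_2(p))=3$, at least three of the six values $u_i(p)$ are $\geq\tfrac12$, so at least two belong to the same $u_i$. Using the reflection symmetries — $x\mapsto-x$ swaps $d_0\leftrightarrow e_0$ and exchanges $u_1\leftrightarrow u_2$ after relabelling, while $y\mapsto-y$ swaps $a\leftrightarrow c$ — this reduces to two cases: (i) $u_1(b),u_1(c)\geq\tfrac12$, and (ii) $u_1(a),u_1(c)\geq\tfrac12$. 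In each case the strategy, mirroring the graph proof, is to propagate constraints along the one-dimensional pieces of $Q$ in order to locate a point $z$ on an arc of $Q$ with $f(z)>1$ at which $u_1(z)=0$, so that $u_2(z)=f(z)>1$. One then contradicts unimodality of $u_2$ by showing that every path in $Q$ from $z$ to $e_0$ must pass through a point where $u_2\leq 1<u_2(z)$, putting $z$ and $e_0$ in different components of $Q_2(\min\{u_2(z),2\})$.

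The principal obstacle, as noted before the statement, is that the planar set $Q$ has no local cut points, unlike the graph of the second graph counterexample. There are several topologically distinct routes in $Q$ between the key points (through the inner rectangle $P$, the central line $R$, and the outer rectangle $K$), and ruling each of them out requires a careful case analysis. The practical tool is that $Q\setminus(\operatorname{int}(D)\cup\operatorname{int}(E))$ is a planar $1$-complex, on which one can argue combinatorially as in the graph case, supplementing with a Jordan-curve type argument in the spirit of Proposition \ref{glavna} and Lemma \ref{lemma} to exclude the extra paths opened up by the planar structure.
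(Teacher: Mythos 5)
Your opening steps are sound: the explicit length-$3$ decomposition for the upper bound, and the deduction (via disconnectedness of $f^{-1}(1,\infty)$, or equivalently via $K$ separating $d_0$ from $e_0$) that $u_1(d_0)\geq 2$ and $u_2(e_0)\geq 2$, match the paper. After that, however, your strategy diverges from the paper's and runs into a genuine obstruction.

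You propose to run the pigeonhole-and-propagation argument from the second graph counterexample (Section~\ref{graphs2}): pick two of $\{a,b,c\}$ where the same $u_i$ is $\geq\frac12$, split into cases by symmetry, and then locate a point $z$ on an arc of $Q$ with $f(z)>1$ and $u_1(z)=0$, so that $u_2(z)>1$ contradicts unimodality. This last step cannot work here: in the plane, $\{f>1\}=\operatorname{int}(D)\cup\operatorname{int}(E)$, so every point of $Q$ with $f>1$ lies in the interior of one of the two solid squares, not on any one-dimensional arc of $Q$. In the graph example, the segments $bd$ and $cd$ had $f$ interpolating from $1$ up to $3$ and served as the ``arcs with $f>1$''; the planar replacement for those segments is precisely the two-dimensional regions $D,E$, on which the "find a zero of $u_1$" argument gives no leverage. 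Moreover, the pigeonhole bound $u_i(p)\geq\frac12$ is strictly weaker than what is actually needed: the paper's proof hinges on \emph{exact} equalities $u_1(q_1)=1$ and $u_2(q_2)=1$ at points of $\{a,b,c\}$, and these are obtained not by pigeonhole but by a different observation --- namely that each $u_i$ must vanish somewhere on the circle $K$, since otherwise $K\hookrightarrow Q_i(r)$ would be nontrivial on $H_1$, violating contractibility. From these exact values the paper then exhibits circles $Z_1,Z_2\subseteq Q$ on which the same vanishing argument applies, forcing $u_1(q_3)=u_2(q_3)=1$ at the third point $q_3\in\{a,b,c\}$ and yielding the clean contradiction $u_1(q_3)+u_2(q_3)=2\neq 1=f(q_3)$. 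The Jordan-curve/Lemma~\ref{lemma} machinery you mention is used in the \emph{first} planar counterexample (Proposition~\ref{glavna}), not here; for this example the right topological input is $H_1$-nontriviality of the circles $K,Z_1,Z_2$ inside the superlevel sets, not a crossing-paths lemma. As you yourself acknowledge, the ``careful case analysis'' is the hard part, and as written it is not supplied; replacing the pigeonhole step by the vanishing-on-circles argument is what actually closes the gap.
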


\begin{proof}
A unimodal decomposition of length $3$ can easily be constructed explicitly and is described in Appendix \ref{direct2}. To show that there is no unimodal decomposition of length $2$, we argue by contradiction. Suppose $f=u_1+u_2$ is such a decomposition. If $r>0$, define superlevel sets
\[
Q_1(r)=u_1^{-1}[r,\infty)\qquad\text{and}\qquad Q_2(r)=u_2^{-1}[r,\infty).
\]
Without loss of generality, we may assume that $u_1(d_0)\geq u_2(d_0)$, in other words, $u_1(d_0)\geq\frac32$. By unimodality, it follows that $u_1(e_0)\leq 1$ (otherwise $u_1(e_0)=r>1$ and the points $d_0$ and $e_0$ would have to lie in separate components of $Q_1(\min\{\frac32,r\})$, separated by $K$). It follows that $u_2(e_0)\geq 2$. Using unimodality again, we have $u_2(d_0)\leq 1$ (otherwise $u_2(d_0)=r>1$ and the points $d_0$ and $e_0$ would have to lie in separate components of $Q_2(\min\{2,r\})$, separated by $K$). It follows that $u_1(d_0)\geq 2$.

Observe that for $i=1,2$, $u_i$ must have a zero somewhere in $K$ (if $u_i(x)\geq r>0$ for all $x\in K$, the inclusion $K\hookrightarrow Q_i(r)$ is non-trivial on $H_1$), say $u_i(w_i)=0$. It follows that $u_{i}(w_{3-i})=1$, so $Q_i(1)\cap K\neq\emptyset$. Since $Q_1(1)$ is contractible, there is a path in $Q\supseteq Q_1(1)$ from $d_0$ to $w_2$. Therefore, there is a point $q_1\in\{a,b,c\}$ such that $u_1(q_1)=1$ (there is no path from $d_0$ to $K\setminus\{a,b,c\}$ in $Q\setminus\{a,b,c\}$). Similarly, there is a point $q_2\in\{a,b,c\}$ such that $u_2(q_2)=1$. Since there are paths in $Q_1(1)$ from $d_6$ to $q_1$ and in $Q_2(1)$ from $e_6$ to $q_2$, we also have $u_1(d_6)=1$ and $u_2(e_6)=1$. Note that $q_1$ and $q_2$ are distinct and let $q_3$ be the third point in $\{a,b,c\}$.

There are now six cases, each of which leads to a contradiction. To avoid treating these cases separately, we proceed as follows. Let $Z_1$ be the unique subspace of $Q$ which is homeomorphic to the circle, contains $d_6$ and $q_1$, but does not contain $e_6$ and $q_2$. Similarly, let $Z_2$ be the unique subspace of $Q$ which is homeomorphic to the circle, contains $e_6$ and $q_2$, but does not contain $d_6$ and $q_1$. For instance, if $q_2=c$, we have $Z_1=abd_6d_7a$ and if $q_2=b$ we have $Z_1=ad_7d_6d_5ck_1k_2k_3k_4a$.

Now note that $u_1$ must have a zero $z_1$ somewhere in $Z_1$ (otherwise $Z_1\hookrightarrow Q_1(r)$ would be non-trivial on $H_1$ for some $r>0$). Therefore $u_2(z_1)=1$ and by unimodality there is a path in $Q_2(1)$ from $q_2$ to $z_1$. This path must contain the point $q_3$ (there is no path from $q_2$ to $Z_1\setminus\{q_1,q_3,d_6\}$ in $Q\setminus\{q_1,q_3,d_6\}$), therefore $u_2(q_3)=1$. Similarly, $u_2$ must have a zero $z_2$ somewhere in $Z_2$. Therefore $u_1(z_2)=1$ and by unimodality, there is a path in $Q_1(1)$ from $q_1$ to $z_2$ which must contain the point $q_3$. We conclude that $u_1(q_3)=1$. As $u_1(q_3)+u_2(q_3)=2\neq 1=f(q_3)$, we have reached a contradiction, thus concluding the proof.
\end{proof}

\begin{remark}
Note that this counterexample can be modified to work in any $\RR^m$, $m\geq2$.
\end{remark}

\subsection{Proof in $\RR^2$ if the Morse-Smale Graph is a Tree}\label{plane}

Hickok, Villatoro and Wang describe in \cite{hickok} how to compute the unimodal category of a nonresonant function $f:\RR^2\to[0,\infty)$ whose Morse-Smale graph is a tree. A {\em nonresonant} function is a Morse function all of whose critical values are distinct \cite{nicolaescu}. We show that their results in fact also imply that the monotonicity conjecture is true for such functions, which appears to have gone unnoticed, even though it follows from their result almost immediately. So at least in the case of Morse functions, the presence of cycles is an essential feature of counterexamples to the monotonicity conjecture.

\begin{definition}[\cite{hickok}]
A {\em Morse-Smale graph} associated to a Morse function $f:\RR^2\to[0,\infty)$ is a weighted graph, embedded in $\RR^2$, whose vertices are the local maxima of $f$ and whose edges are associated to the saddles of $f$ in the following way: corresponding to each saddle, the graph has precisely one edge, which is realized as a path connecting two local maxima and passing through the saddle, so that the function values are decreasing on the portion of the path between each maximum and the saddle. The weight corresponding to a maximum $m\in\RR^2$ is given by $f(m)$ and the weight corresponding to the edge associated to the saddle $s\in\RR^2$ is given by $f(s)$.
\end{definition}

Note that a Morse-Smale graph is not uniquely determined by the function.

\subsubsection{Path values}

Following \cite{hickok}, we are going to use the concept of {\em the path value}, however, we phrase it in a slightly different way, which we feel should be more amenable to generalization. In \cite{hickok}, this concept is defined using the concept of the Morse-Smale graph. We prefer to bypass this using a somewhat more general definition, that applies to general topological spaces. The benefit of this approach is that we obtain new lower bounds for general topological spaces.

\begin{definition}
Let $f:X\to[0,\infty)$ and $x_1,x\in X$. Then the {\em path value} from $x_1$ to $x$ is the number
\[
\pv(x_1,x)=\sup_{\gamma\in\Gamma(x_1,x)}\min_{t\in[0,1]} f(\gamma(t)),
\]
where $\Gamma(x_1,x)$ is the set of all paths $\gamma:(I,0,1)\to(X,x_1,x)$.
\end{definition}

The concept of path value can be used to obtain lower bounds for $\ucat$:

\begin{proposition}\label{lower_bound}
Suppose $f:X\to[0,\infty)$ is such that $\ucat(f)\leq n\in\NN$. Then there exist points $x_1,x_2,\ldots,x_n$ such that
\[
\sum_{i=1}^n\pv(x_i,x)\geq f(x)
\]
holds for each $x\in X$.
\end{proposition}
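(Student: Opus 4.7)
The plan is to take a unimodal decomposition $f=u_1+\ldots+u_n$ witnessing the bound $\ucat(f)\leq n$ (padding with zero functions if fewer than $n$ summands are needed), and to pick $x_i$ to be any point at which $u_i$ attains its maximum. Note that such a point exists: if $u_i\not\equiv 0$ with supremum $M_i$, then by the definition of unimodality $u_i^{-1}[M_i,\infty)$ is contractible and in particular nonempty; if $u_i\equiv 0$ then any choice of $x_i$ is fine and the corresponding term will drop out of the inequality.

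The whole statement then reduces to a per-summand comparison: I will show that for each $i$ and each $x\in X$,
\[
\pv(x_i,x)\geq u_i(x).
\]
Summing this over $i$ gives $\sum_i\pv(x_i,x)\geq\sum_i u_i(x)=f(x)$, as required.

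To prove the per-summand inequality, fix $x\in X$. If $u_i(x)=0$ there is nothing to show, since path values are nonnegative. Otherwise $u_i(x)>0$, so by unimodality the superlevel set $S:=u_i^{-1}[u_i(x),\infty)$ is contractible, hence path-connected; it contains $x$ by definition and $x_i$ because $u_i(x_i)=M_i\geq u_i(x)$. Choose any path $\gamma:I\to S$ from $x_i$ to $x$. Then $u_i(\gamma(t))\geq u_i(x)$ for all $t$, and since $f=\sum_j u_j\geq u_i$ pointwise (all summands are nonnegative), we obtain
\[
\min_{t\in[0,1]}f(\gamma(t))\geq\min_{t\in[0,1]}u_i(\gamma(t))\geq u_i(x).
\]
Taking the supremum over paths from $x_i$ to $x$ yields $\pv(x_i,x)\geq u_i(x)$.

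The only place where any care is needed is the existence of a maximizer $x_i$ of $u_i$, which is immediate from the definition of unimodality as noted above; everything else is a direct unwinding of definitions. There is no genuine obstacle here—the lemma is really just the observation that picking a peak of each summand gives the desired collection, and that the path value seen by $f$ dominates the path value seen by any individual summand because $f$ is bounded below by that summand along any path lying in its superlevel set.
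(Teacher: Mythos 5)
Your proof is correct and takes essentially the same approach as the paper's: choose $x_i$ to be a maximizer of each summand $u_i$, note that the superlevel set $u_i^{-1}[u_i(x),\infty)$ is path-connected so a path from $x_i$ to $x$ exists along which $u_i\geq u_i(x)$, and use $f\geq u_i$ to conclude $\pv(x_i,x)\geq u_i(x)$ before summing over $i$. Your added remarks about padding when $\ucat(f)<n$ and handling the trivial case $u_i(x)=0$ are minor completeness points the paper leaves implicit; the core argument is identical.
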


\begin{proof}
Let $f=\sum_{i=1}^n u_i$ be a unimodal decomposition and choose points $x_1,\ldots,x_n$ so that for each $i$, $x_i$ is a maximum of $u_i$. Now, observe that for each $x\in X$ we have $u_i(x)\leq\pv(x_i,x)$. This is because $u_i^{-1}[u_i(x),\infty)$ is path connected, so there exists a path $\gamma$ from $x_i$ to $x$ such that $u_i(\gamma(t))\geq u_i(x)$ holds for all $t$. Therefore
\[
u_i(x)=\min_{t\in[0,1]}u_i(\gamma(t))\leq\min_{t\in[0,1]}f(\gamma(t))\leq\pv(x_i,x).
\]
This implies
\[
f(x)=\sum_{i=1}^n u_i(x)\leq\sum_{i=1}^n \pv(x_i,x).
\]
\end{proof}

The converse of this proposition is not generally true, however, as the authors of \cite{hickok} observe, it is almost true in the case $X=\RR^2$.

\begin{theorem}[\cite{hickok}, Proposition 4.3]\label{decomposition}
Suppose $f:\RR^2\to[0,\infty)$ is a nonresonant function whose Morse-Smale graph is a tree and there are local maxima $x_1,\ldots,x_n$ (not necessarily distinct) such that
\[
\sum_{i=1}^n\pv(x_i,x)>f(x)
\]
holds for each local maximum $x\neq x_i$ ($i=1,2,\ldots,n$). Then $\ucat(f)\leq n$.
\end{theorem}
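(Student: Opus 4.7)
The plan is to construct the decomposition $f = u_1+\cdots+u_n$ by arranging the summands so that, at every height $c>0$, the superlevel set $u_i^{-1}[c,\infty)$ is a path component of a suitable region of $\RR^2$ determined by $f$ and $c$. Because the Morse-Smale graph of $f$ is a tree, each path component of a superlevel set of $f$ is simply connected and hence contractible, which is the property responsible for the unimodality of the $u_i$. The hypothesis $\sum_i\pv(x_i,x)>f(x)$ at each local maximum $x$ supplies the slack needed for a pointwise partition of $f$ into summands satisfying $u_i\leq\pv(x_i,\cdot)$, which is the natural necessary condition for each $u_i$ to be unimodal with $x_i$ in its peak region.

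The argument proceeds by induction on the number $N$ of local maxima of $f$. The base case $N=1$ is immediate: Morse theory on $\RR^2$ combined with the tree assumption forces every superlevel set of $f$ to be contractible, so $f$ itself is unimodal and $\ucat(f)\leq n$. For the inductive step, pick a leaf $m$ of the Morse-Smale tree $T$ adjacent to the saddle $s$, and let $B$ be the closed descending branch at $m$. Since $m$ is the unique local maximum in $B$, either $m=x_j$ for some $j$, or $m\notin\{x_1,\ldots,x_n\}$. The crucial geometric observation is that every path from any $x_i\notin B$ to a point $y\in B$ with $f(y)>f(s)$ must cross $s$, hence $\pv(x_i,y)\leq f(s)$. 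In the first case, one allocates to $u_j$ the part of $f$ lying in $B$ above the saddle level (tapered suitably so that $u_j$ is unimodal with peak at $m=x_j$). In the second case, the hypothesis together with $\pv(x_i,m)\leq f(s)$ forces $n\,f(s)>f(m)$, which provides exactly the room needed to split the cap of height $f(m)-f(s)$ on $B$ into pieces $v_i$ peaked at $m$ with $v_i(m)\leq\pv(x_i,m)$, each of which is then incorporated into the summand $u_i$ produced by the recursive decomposition. In either case, the residual function has Morse-Smale tree $T\setminus\{m\}$ and one fewer local maximum.

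The main obstacle is verifying that the residual function continues to satisfy the path-value hypothesis with the (possibly reassigned) list of points. Nonresonance ensures that subtracting a cap confined to $B$ does not create new critical values nor alter the tree outside $B$, so only the leaf $m$ and edge $s$ are removed. For any point $x\notin B$, the path value of $x_i$ to $x$ with respect to the residual function equals the original $\pv(x_i,x)$, because optimal paths for points outside $B$ can be chosen to avoid $B$ entirely. The strict inequality in the original hypothesis is thus preserved at every remaining local maximum of the residual, and the induction closes; assembling the pieces yields $n$ unimodal summands in total, establishing $\ucat(f)\leq n$.
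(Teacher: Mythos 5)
The paper does not prove this theorem itself; it cites Proposition 4.3 of \cite{hickok} and remarks that the cited proof first converts the Morse--Smale tree into a path (a step where nonresonance is essential) and then produces an explicit unimodal decomposition in the spirit of the one-dimensional sweep. Your induction-on-leaves strategy is a genuinely different route. The key geometric fact you use — that for $x_i\notin B$ and $y\in B$ with $f(y)>f(s)$, any path from $x_i$ to $y$ must dip to level $\leq f(s)$, so $\pv(x_i,y)\leq f(s)$ — is correct, and so is the arithmetic $n\,f(s)>f(m)$ in Case~2. But the induction as written does not close.

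The central gap is in Case~1, where $m=x_j$. You allocate a cap supported in $B$ to $u_j$, pass to the residual $\tilde f$, and apply the inductive hypothesis to $\tilde f$ with the shortened list $(x_i)_{i\neq j}$. For this to be legitimate you need $\sum_{i\neq j}\pv(x_i,x)>\tilde f(x)=f(x)$ at every local maximum $x\notin B$ of $\tilde f$ outside the remaining list. But the original hypothesis only gives $\sum_i\pv(x_i,x)>f(x)$, and you are deleting the term $\pv(x_j,x)=\pv(m,x)$, which can be as large as $f(s)>0$. Nothing forces the strict inequality to survive the deletion — indeed one can arrange the edge weights of the Morse--Smale path so that the full sum exceeds $f(x)$ but the sum without $\pv(m,x)$ does not. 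Your observation that $\pv_{\tilde f}(x_i,x)=\pv_f(x_i,x)$ for $x\notin B$ is true but only shows the surviving terms do not change; it does not compensate for the one you removed. (You gesture at ``a possibly reassigned list of points,'' but no reassignment rule is given, and it is not clear one exists that simultaneously keeps $n-1$ points and preserves the inequalities.)

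There is a second, related, gap in Case~2 at the gluing step. Setting $u_i=\tilde u_i+v_i$ with $v_i$ supported in $B$ and peaked at $m$ does not by itself produce a unimodal $u_i$: the superlevel set $u_i^{-1}[c,\infty)$ is roughly the union of a small set near $m$ (from $v_i$) and the set $\tilde u_i^{-1}[c,\infty)$ elsewhere, and unimodality requires these to be connected. The bound $v_i(m)\leq\pv(x_i,m)$ is the right necessary condition — it says the obstruction is not in the values of $f$ — but it does not automatically provide the connecting path, because the inductive construction of $\tilde u_i$ was never required to run along the path realizing $\pv(x_i,m)$ at the needed height. To make the induction go through you would need to carry an additional invariant (that each $\tilde u_i$ attains at least $v_i(m)$ on some path reaching the boundary of $B$), and to fix Case~1 you would need a genuine reassignment argument; neither is supplied.
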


This result relies on the fact that a nonresonant function whose Morse-Smale graph is a tree always has a Morse-Smale graph which is a path. This allows the authors to describe a general function of this kind in terms simple enough to allow for the construction of an explicit unimodal decomposition, which is what they proceed to do.

\begin{remark}
We note that the assumption of nonresonance which seems to have been overlooked by the authors of \cite{hickok} is crucial here, otherwise it could happen that the Morse-Smale graph cannot be converted into a path. For instance, a function whose critical sublevel sets are as depicted in the following picture, has a Morse-Smale graph which is a tree, but which cannot be converted into a path by the procedure described in \cite{hickok}.
\begin{center}
\includegraphics[width=170pt]{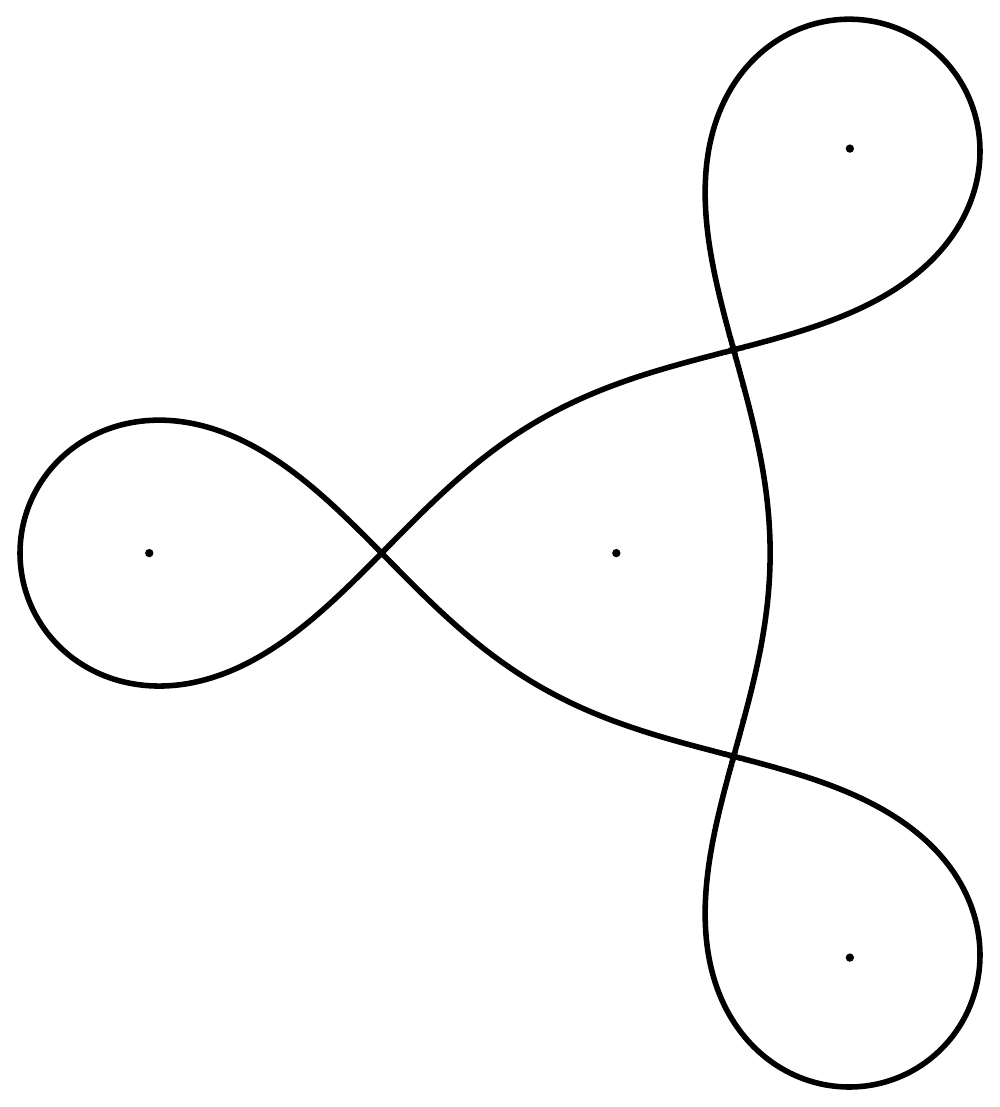}
\end{center}
\end{remark}

\subsubsection{Monotonicity}

Using the results of \cite{hickok}, we can now prove that the monotonicity conjecture holds for any nonresonant function $f:\RR^2\to[0,\infty)$ whose Morse-Smale graph is a tree. This follows almost immediately from the characterization using path values.

\begin{theorem}
Suppose $f:\RR^2\to[0,\infty)$ is a nonresonant function whose Morse-Smale graph is a tree and $0<p_1<p_2\leq\infty$. Then
\[
\ucat^{p_1}(f)\leq\ucat^{p_2}(f).
\]
\end{theorem}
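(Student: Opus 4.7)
The plan is to invoke the path value framework of Hickok, Villatoro and Wang and bridge the two exponents with an elementary power inequality, with the Morse geometry supplying the strictness needed at the end.

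By Lemma~\ref{powers} it suffices to prove $\ucat(f^{p_1}) \leq \ucat^{p_2}(f)$. The function $f^{p_1}$ is again nonresonant with a tree Morse-Smale graph, since $t \mapsto t^{p_1}$ is a strictly increasing bijection of $[0,\infty)$ preserving the critical set of $f$, the distinctness and ordering of critical values, and the topology of all superlevel sets---everything the Morse-Smale tree structure sees. Moreover $\pv_{f^p}(x,y) = \pv_f(x,y)^p$ for every $p > 0$, directly from the definition of path value. I would therefore aim to produce local maxima $x_1,\dots,x_n$ of $f$, where $n = \ucat^{p_2}(f)$, such that
\[
\sum_{i=1}^n \pv_f(x_i,x)^{p_1} \;>\; f(x)^{p_1}
\]
for every local maximum $x \notin \{x_1,\dots,x_n\}$, and then invoke Theorem~\ref{decomposition} applied to $f^{p_1}$ to conclude $\ucat(f^{p_1}) \leq n$.

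The candidate points come from the hypothesis $\ucat^{p_2}(f)=n$. When $p_2 < \infty$, Proposition~\ref{lower_bound} applied to $f^{p_2}$ supplies $x_1,\ldots,x_n$ with $\sum_i \pv_f(x_i,x)^{p_2} \geq f(x)^{p_2}$ everywhere, and the peaks of the unimodal summands may be pushed along the superlevel flow to nearby local maxima of $f$ without decreasing any path value. When $p_2 = \infty$, I would choose $x_i$ to be a maximum of the $i$-th summand in a decomposition $f = \max_i u_i$; unimodality then gives a path from $x_i$ to any $x$ along which $u_i \geq u_i(x)$, and for the index $i_0$ realising $u_{i_0}(x) = f(x)$ this certifies $\pv_f(x_{i_0},x) = f(x)$, so $\max_i \pv_f(x_i,x) = f(x)$ holds everywhere.

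The bridge between the exponents is the elementary inequality $a^{p_2} \leq a^{p_1} b^{p_2 - p_1}$ for $0 \leq a \leq b$ and $p_1 < p_2 < \infty$. With $a_i = \pv_f(x_i,x)$ and $b = f(x)$, summing in $i$ and dividing by $b^{p_2 - p_1}$ turns the finite-$p_2$ hypothesis $\sum a_i^{p_2} \geq b^{p_2}$ into $\sum a_i^{p_1} \geq b^{p_1}$; in the $p_2 = \infty$ case this weak inequality follows immediately from $\sum a_i^{p_1} \geq (\max_i a_i)^{p_1} = b^{p_1}$.

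The main obstacle is upgrading this weak inequality to a strict one at each local maximum $x \notin \{x_1,\dots,x_n\}$. Tracing the equality case in either of the two preceding reductions forces each $a_i$ to lie in $\{0,b\}$ with exactly one $a_{i_0} = b$. But $\pv_f(x_{i_0},x) = f(x)$ means a path joins $x_{i_0}$ to $x$ inside the superlevel set $\{f \geq f(x)\}$, while the fact that $f$ is Morse and $x$ is a local maximum gives $f < f(x)$ throughout a punctured neighbourhood of $x$, so the path-component of $\{f \geq f(x)\}$ containing $x$ is the singleton $\{x\}$. Hence $x_{i_0} = x$, contradicting $x \notin \{x_1,\dots,x_n\}$, and the argument is complete.
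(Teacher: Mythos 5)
Your proof is correct and uses the same strategy as the paper's---Lemma \ref{powers}, Proposition \ref{lower_bound}, a power inequality, and Theorem \ref{decomposition}---but you fill in several details the paper glosses over or gets backwards. The paper sets $g=f^{p_1}$, $p=p_2/p_1>1$, assumes $\ucat(g)\leq n$, and deduces $\ucat(g^p)\leq n$, which proves the \emph{reverse} of the desired $\ucat(g)\leq\ucat(g^p)$; moreover the cited ``norm inequality'' $\sum_i a_i^p>b^p$ (given $\sum_i a_i\geq b$ with at least two $a_i$ nonzero and all $a_i<b$) actually fails for $p>1$---take $a_1=a_2=b/2$. The intended argument requires the \emph{reciprocal} exponent $p_1/p_2<1$, which is exactly what your direct comparison of $\sum_i\pv_f(x_i,x)^{p_1}$ against the hypothesis $\sum_i\pv_f(x_i,x)^{p_2}\geq f(x)^{p_2}$ achieves. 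You also correctly note that Proposition \ref{lower_bound} returns arbitrary peak points while Theorem \ref{decomposition} requires local maxima of $f$, and you supply the pushing-to-local-maxima step the paper omits; and you treat $p_2=\infty$ separately, a case the paper's $g^p$ reduction does not reach. One cosmetic gap: in your $p_2=\infty$ branch the $x_i$ are maxima of the summands $u_i$, not of $f$, so they too need the pushing step before Theorem \ref{decomposition} applies. Your strictness argument via the equality case, together with the Morse-theoretic fact that $\pv_f(y,x)<f(x)$ whenever $x$ is a nondegenerate local maximum and $y\neq x$, is correct and matches the paper's ``at least two nonzero path values'' observation.
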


\begin{proof}
Let $g=f^{p_1}$ and $p=\frac{p_2}{p_1}$. Note that $g$ is again nonresonant and its Morse-Smale graph is still a tree. By Lemma \ref{powers}, the statement we wish to prove is equivalent to
\[
\ucat(g)\leq\ucat(g^p).
\]
Suppose $\ucat(g)\leq n$. Then by Theorem \ref{lower_bound}, there exist points $x_1,\ldots,x_n$ such that
\[
\sum_{i=1}^n\pv(x_i,x)\geq g(x).
\]
Now, if $x$ is a local maximum of $g$ distinct from all $x_i$, $i=1,2,\ldots,n$, at least two path values $\pv(x_i,x)$ must be nonzero, since otherwise they cannot sum to $\geq g(x)$. By the usual norm inequalities, this immediately implies
\[
\sum_{i=1}^n\pv(x_i,x)^p>g(x)^p,
\]
which, using Theorem \ref{decomposition}, yields
\[
\ucat(g^p)\leq n,
\]
as desired.
\end{proof}

\subsection{Discussion}

\subsubsection{Multimodal Functions}

A Morse function on a manifold $M$ contains a lot of information about the topology of a manifold. For instance, the following fundamental theorem tells us that a manifold can be reconstructed up to homotopy by the process of attaching handles. In particular, this allows us to describe the homology of the manifold, using the observation that attaching an $i$-handle must either kill a homology class in $H_{i-1}$ or create a homology class in $H_i$.

\begin{theorem}[\cite{milnor}]\label{morsetheory}
Suppose $f:M\to\RR$ is a Morse function, $p\in M$ is a critical point of index $i$ and $f(p)=a$ the corresponding critical value. For every $x\in\RR$ let $M_x=f^{-1}(-\infty,x]$. Then if $[a-\epsilon,a+\epsilon]$ contains no other critical values of $f$, the sublevel set $M_{a+\epsilon}$ is obtained (up to homotopy) from $M_{a-\epsilon}$ by attaching an $i$-handle.
\end{theorem}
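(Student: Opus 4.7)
The plan is to follow the classical Morse-theoretic argument from Milnor's book, which hinges on two ingredients: the Morse Lemma providing standard local coordinates near the critical point, and a deformation retraction constructed via a gradient-like vector field outside a small neighborhood of $p$.

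First, I would invoke the Morse Lemma to produce a chart $(U,\varphi)$ centered at $p$ with coordinates $(x_1,\ldots,x_n)$ such that
\[
f\circ\varphi^{-1}(x)=a-x_1^2-\cdots-x_i^2+x_{i+1}^2+\cdots+x_n^2
\]
on a neighborhood of the origin. In these coordinates the candidate $i$-handle $H=\{x:x_1^2+\cdots+x_i^2\le\delta,\ x_{i+1}^2+\cdots+x_n^2\le\delta'\}$ (for suitably small $\delta,\delta'$) can be exhibited explicitly, with its core disk $\{x_{i+1}=\cdots=x_n=0\}$ attached along the sphere $\{x_1^2+\cdots+x_i^2=\delta,\ x_{i+1}=\cdots=x_n=0\}$, which by construction sits inside $M_{a-\epsilon}$ once $\epsilon$ is chosen with $\delta=\epsilon$ (the details follow Milnor's diagram with the regions $f\le a-\epsilon$, the handle $H$, and the transition region).

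Second, I would choose a Riemannian metric on $M$, take the gradient $\nabla f$, and modify it by a bump function to obtain a smooth vector field $X$ that equals $-\nabla f$ outside a neighborhood of $p$, vanishes at $p$, and satisfies $X(f)\le 0$ everywhere, with $X(f)<0$ on a suitable complement of $H$. Integrating $X$ produces a flow whose trajectories, reparametrized appropriately, give a deformation retraction of $M_{a+\epsilon}\setminus\mathrm{int}(H)$ onto $M_{a-\epsilon}\cup\partial H$. Combining this with $H$ itself and the identity on $M_{a-\epsilon}$ yields the desired homotopy equivalence
\[
M_{a+\epsilon}\simeq M_{a-\epsilon}\cup_{\partial D^i\times D^{n-i}}H.
\]

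The main obstacle is the construction of this modified gradient-like vector field and the verification that the corresponding flow carries $M_{a+\epsilon}$ into $M_{a-\epsilon}\cup H$ in finite time: trajectories of $-\nabla f$ that approach the stable manifold of $p$ take infinite time to reach the critical level, so one has to either truncate the flow using the bump function or reparametrize carefully so that only trajectories ending in $H$ are allowed to "stall." This is precisely the content of Milnor's Theorems 3.1 and 3.2, and I would simply appeal to them rather than reprove these technicalities, since the statement is only needed for motivational purposes in the present paper.
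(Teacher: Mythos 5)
The paper does not prove this theorem itself; it simply cites Milnor's \emph{Morse Theory}, which is exactly what you do after sketching the two standard ingredients (Morse Lemma coordinates plus the modified gradient-like flow giving the deformation retraction, Milnor's Theorems 3.1 and 3.2). Your sketch is an accurate summary of Milnor's argument and your decision to defer to Milnor for the technical details matches the paper's treatment, so the proposal is correct and takes essentially the same approach.
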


Given the successful application of the concept of Morse-Smale graphs which are trees in the case of $\RR^2$, it would be desirable to have a similar concept in $\RR^m$ for $m>2$. In fact, such a graph can be defined if the function $f:\RR^m\to[0,\infty)$ only has critical points of indices $m$ and $m-1$.

However, we may be able to generalize this a bit. The main problem which we are trying to avoid using the requirement that the Morse-Smale graph is a tree, is the presence of cycles in the superlevel sets. As a more general notion, akin to unimodality, that captures this, we propose the following:

\begin{definition}
A function $f:X\to[0,\infty)$ is {\em multimodal} if there is a $M>0$ such that each superlevel set $f^{-1}[c,\infty)$ is homotopy equivalent to a finite set of points for $0<c\leq M$ and empty for $c>M$.
\end{definition}

Such a function cannot have any cycles that would allow us to force certain values upon the unimodal summand in the decomposition as we did with the counterexamples in the plane, so it seems more likely that the following question could admit a positive answer:

\begin{question}
Suppose $X$ is a sufficiently nice space (for instance a manifold) and $f:X\to[0,\infty)$ is a multimodal function. Does this imply that $\ucat^p(f)$ is monotone in $p$?
\end{question}

To demonstrate that this is indeed a generalization of the case studied in \cite{hickok}, we prove the following result.

\begin{proposition}
Suppose $f:\RR^m\to[0,\infty)$ is a multimodal nonresonant function with compact support. Then $f$, restricted to $f^{-1}(0,\infty)$, has only critical points of index $m$ and $m-1$.
\end{proposition}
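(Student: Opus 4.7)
The plan is to apply Morse theory directly to the superlevel sets of $f$. For $c > 0$ set $S_c = f^{-1}[c, \infty)$; since $\supp f$ is compact, so is every such $S_c$. Observe that sublevel sets of $-f$ coincide with superlevel sets of $f$, and that a critical point of $f$ of index $i$ is a critical point of $-f$ of index $m - i$. Applying Theorem~\ref{morsetheory} to $-f$: if $p \in f^{-1}(0, \infty)$ is a critical point of $f$ of index $i$ with $f(p) = a$, then for $\epsilon > 0$ small enough that $a - \epsilon > 0$ and that $a$ is the only critical value in $[a - \epsilon, a + \epsilon]$ (possible by nonresonance), $S_{a - \epsilon}$ is homotopy equivalent to $S_{a + \epsilon}$ with a single handle of index $j := m - i$ attached.

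Next I would invoke multimodality: each $S_{a \pm \epsilon}$ is homotopy equivalent to a (possibly empty) finite discrete set, so $H_k(S_{a \pm \epsilon}) = 0$ for every $k \geq 1$. Since a $j$-handle attachment contributes relative homology $\mathbb{Z}$ concentrated in degree $j$, the long exact sequence of the pair $(S_{a - \epsilon}, S_{a + \epsilon})$ yields isomorphisms $H_k(S_{a - \epsilon}) \cong H_k(S_{a + \epsilon})$ for $k \notin \{j - 1, j\}$ together with
\[
0 \to H_j(S_{a - \epsilon}) \to \mathbb{Z} \xrightarrow{\partial} H_{j - 1}(S_{a + \epsilon}) \to H_{j - 1}(S_{a - \epsilon}) \to 0.
\]

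The crucial step is a case analysis on $j$. If $j \geq 2$, then all four flanking groups in the displayed sequence vanish by multimodality, reducing it to $0 \to 0 \to \mathbb{Z} \to 0 \to 0 \to 0$, which is absurd. Hence $j \leq 1$, i.e.\ $i \in \{m - 1, m\}$, as required. For completeness, the two remaining cases are consistent with multimodality: $j = 0$ (equivalently $i = m$) simply adds a new connected component, while $j = 1$ (equivalently $i = m - 1$) forces $\partial$ to be injective, so the handle merges two components rather than creating an $H_1$-class.

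The only care needed is to verify the Morse-theoretic input on both sides of the critical value: compactness of $\supp f$ handles the non-compactness of $\RR^m$, and the hypothesis $p \in f^{-1}(0, \infty)$ keeps $a - \epsilon > 0$ so that both $S_{a \pm \epsilon}$ lie in the multimodal regime (note that $S_c = \emptyset$ for $c > M$ trivially counts as a finite discrete set, so no analogous constraint is needed at the upper end, which matters for critical points at the global maximum). Beyond this bookkeeping, the handle-attachment long exact sequence is the whole content of the argument, so I do not anticipate any substantive obstacle.
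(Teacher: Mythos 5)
Your proposal is correct and takes essentially the same route as the paper: pass to $-f$, use Theorem~\ref{morsetheory} to describe crossing a critical value as a handle attachment, and contradict multimodality via the homology of the flanking superlevel sets (the paper phrases the homological step informally as ``either kills a class in $H_{i-1}$ or creates one in $H_i$,'' while you write out the long exact sequence). The paper additionally chooses $p$ with minimal critical value, but as your argument shows this is not needed, since multimodality already controls the homology of both $S_{a\pm\epsilon}$ directly.
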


\begin{proof}
Consider instead the function $-f$ and let $M_x=(-f)^{-1}(-\infty,x]$ for each $x<0$. By Morse theory, it suffices to prove that this function only has critical points of index $0$ and $1$. Suppose $-f$ has a critical point $p$ of index $i>1$ and choose it so that the corresponding critical value $a=-f(p)$ is minimal. Suppose $[a-\epsilon,a+\epsilon]$ contains no other critical values. By Theorem \ref{morsetheory}, up to homotopy, passing a critical point of index $i$ corresponds to attaching an $i$-handle, so $M_{a+\epsilon}$ is homotopy equivalent to $M_{a-\epsilon}$ with an $i$-handle attached. However, attaching an $i$-handle must either kill a homology class in $H_{i-1}$ or create a homology class in $H_i$. In both cases, this is a contradiction: by multimodality, $M_{a-\epsilon}$ and $M_{a+\epsilon}$ are both finite unions of contractible sets, so such homology classes cannot exist.
\end{proof}

\subsubsection{$\pi_0$-Unimodal Functions}

Cycles seem to be fundamental to the nature of our counterexamples to monotonicity. It would be interesting to have a coarser version of unimodality that ignores cycles. If in the condition of unimodality, we use path-connectedness instead of contractibility, we obtain the following definition:

\begin{definition}
A continuous function $u:X\to[0,\infty)$ is {\em $\pi_0$-unimodal} if there is a $M>0$ such that the superlevel sets $u^{-1}[c,\infty)$ are path-connected for $0<c\leq M$ and empty for $c>M$.
\end{definition}

This yields the following notion:

\begin{definition}
Let $p\in(0,\infty)$. The {\em $\pi_0$-unimodal $p$-category} $\ucat_{\pi_0}^p(f)$ of a function $f:X\to[0,\infty)$ is the minimum number $n$ of $\pi_0$-unimodal functions $u_1,\ldots,u_n:X\to[0,\infty)$ such that pointwise, $f=(\sum_{i=1}^n u_i^p)^{\frac1p}$. The {\em $\pi_0$-unimodal $\infty$-category} is defined analogously, using the $\infty$-norm instead.
\end{definition}

Other variations of this concept are also possible.

\begin{question}
Is $\ucat_{\pi_0}^p(f)$ easier to compute than $\ucat^p(f)$? Is it monotone in $p$?
\end{question}

\subsubsection{Cohomological Approach}

In the study of Lusternik-Schnirelmann category \cite{cornea}, cohomological methods have been very successful. For instance, one of the basic bounds that is used is
\[
\operatorname{cup}_R(X)<\cat(X).
\]
A natural question is whether such cohomological methods can be developed for the study of unimodal category. First an appropriate cohomology theory is needed. If we expect such a theory to be functorial, it must be defined on a suitable category of functions. One natural candidate seems to be the category whose objects are functions $X\to[0,\infty)$ and a morphism between two such functions $X_1\to[0,\infty)$ and $X_2\to[0,\infty)$ is an appropriate commutative triangle. To be useful, one property such a cohomology theory should have is that the cohomology ring of a unimodal function is trivial. Furthermore, it should be additive with respect to functions with disjoint supports. The other properties are not immediately clear. One issue that arises is that the notion of unimodal category does not seem to be homotopy invariant in any sense. This is analogous to the notion of $\gcat$, which is also not a homotopy invariant.

\begin{question}
Is such a cohomological approach possible?
\end{question}

\section{Conclusions}

The concept of unimodal category, as envisioned by Baryshnikov and Ghrist in \cite{baryshnikov}, has its origins in statistics. However, the definition of the concept requires much less than this, namely a nonnegative function $f:X\to[0,\infty)$. We feel that the question of how many functions with a unique local maximum are needed to express such an $f$ is very natural from the point of view of mathematical analysis. For this reason, we find it somewhat surprising that this question has received so little attention thus far. One of the main aims of our work was to demonstrate that this area of mathematics admits many interesting questions and has the potential to become a vibrant area of research.

Our work is mainly centered around the monotonicity conjecture of \cite{baryshnikov}, which has turned out to be a more interesting question than initially thought, especially since it turned out to be false. This leaves us with many open questions. For instance, the constructions we provide rely on the existence of cycles in the superlevel sets of the functions. This leads one to wonder if there is a more conceptual reason explaining this failure of monotonicity in the presence of cycles and what are the precise conditions a function should satisfy for monotonicity to hold. It would be interesting to construct topological invariants measuring the extent to which monotonicity can fail.

We have reformulated the original results of Baryshnikov and Ghrist for functions $f:\RR\to[0,\infty)$ in a language that we feel is more natural than that of the original article, using the concepts of total, positive and negative variation. This has led us to a general decomposition theorem for such functions, as well as a characterization of functions $f:S^1\to[0,\infty)$. The question of what the natural context for a general treatment of $\ucat$ for continuous functions $f:\RR^n\to[0,\infty)$ might be, remains widely open. We speculate that the answer might lie in a new kind of (co)homology theory, designed to treat such problems in general.

\section{Acknowledgments}

The author would like to thank his thesis advisors Dušan Repovš and Primož Škraba, the latter of whom suggested the problem. Thanks also to Jaka Smrekar for encouragement during the early stages of this research. The author was supported by the Slovenian Research Agency grant P1-0292-0101.

\appendix

\section{Direct Descriptions of the Counterexamples in $\RR^2$}

\subsection{First Example}\label{direct1}

Piecewise linearity of the functions $u_1,u_2,F:\RR^2\to[0,\infty)$ appearing in Section \ref{plane1} follows from the properties of the $\infty$-distance. For concreteness, we explicitly describe the decomposition of the plane with respect to which the functions are piecewise linear. The main advantage of $u_1$ and $F$ being piecewise linear is that other facts about these functions can be verified completely computationally.

Before we begin, we need some notation. Two points $p_1,p_2\in\RR^2$ determine a segment
\[
p_1p_2=\{p\in\RR^2\mid\exists t\in[0,1]:p=(1-t)p_1+tp_2\}.
\]
We write $p_1p_2\ldots p_n$ for the union of segments $p_1p_2,p_2p_3,\ldots,p_{n-1}p_n$. If $p_1p_2\ldots p_np_1$ is a topological circle in $\RR^2$, it is the boundary of a uniquely determined compact set in $\RR^2$, which we denote by $\overline{p_1p_2\ldots p_n}$.

We can describe $u_1,u_2$ and $F$ as piecewise linear functions defined by their values on the vertices of a polygonal decomposition of $S=\supp f$ consisting of $44$ vertices, $95$ edges and $52$ faces, namely triangles, trapezoids and two non-convex quadrilaterals. Note that we do not count the ``face at infinity'' and we consider parallelograms to be a special case of trapezoids. Some care must be taken as not every choice of values at the vertices of a quadrilateral can be extended to a linear function. First, we list the vertices (indexed lexicographically):
\[
\begin{aligned}
x_1&=(-4, 0),\\
x_2&=(-4, 2),\\
x_3&=(-4, 4),\\
x_4&=(-3, 1),\\
x_5&=(-3, 2),\\
x_6&=(-3, 3),\\
x_7&=(-2, -2),\\
x_8&=(-2, 0),\\
x_9&=(-2, 1),\\
x_{10}&=(-2, 2),\\
x_{11}&=(-2, 3),
\end{aligned}\qquad\begin{aligned}
x_{12}&=(-2, 4),\\
x_{13}&=(-1, -1),\\
x_{14}&=(-1, 1),\\
x_{15}&=(-1, 2),\\
x_{16}&=(-1, 3),\\
x_{17}&=(0, -4),\\
x_{18}&=(0, -2),\\
x_{19}&=(0, 2),\\
x_{20}&=(0, 4),\\
x_{21}&=(1, -3),\\
x_{22}&=(1, -2),
\end{aligned}\qquad\begin{aligned}
x_{23}&=(1, -1),\\
x_{24}&=(1, 1),\\
x_{25}&=(1, 2),\\
x_{26}&=(1, \tfrac{11}5),\\
x_{27}&=(1, 3),\\
x_{28}&=(2, -4),\\
x_{29}&=(2, -3),\\
x_{30}&=(2, -2),\\
x_{31}&=(2, -1),\\
x_{32}&=(2, 0),\\
x_{33}&=(2, 1),
\end{aligned}\qquad\begin{aligned}
x_{34}&=(2, 2),\\
x_{35}&=(2, 4),\\
x_{36}&=(\tfrac{11}5, 3),\\
x_{37}&=(3, -3),\\
x_{38}&=(3, -2),\\
x_{39}&=(3, -1),\\
x_{40}&=(3, 1),\\
x_{41}&=(4, -4),\\
x_{42}&=(4, -2),\\
x_{43}&=(4, 0),\\
x_{44}&=(4, 2).
\end{aligned}
\]
We omit listing the edges $h_1,h_2,\ldots,h_{95}$ (ordered lexicographically by the indices of the vertices) as they are simply the edges of the $52$ polygons in the decomposition. Finally, we list the faces, using the notation defined above (ordered lexicographically by the corresponding sets of vertices):
\[
\mkern-24mu\begin{aligned}
f_1&=\overline{x_1x_4x_5x_2},\\
f_2&=\overline{x_1x_8x_9x_4},\\
f_3&=\overline{x_2x_6x_3},\\
f_4&=\overline{x_2x_5x_6},\\
f_5&=\overline{x_3x_6x_{12}},\\
f_6&=\overline{x_4x_{10}x_5},\\
f_7&=\overline{x_4x_9x_{10}},\\
f_8&=\overline{x_5x_{10}x_{11}x_6},\\
f_9&=\overline{x_6x_{11}x_{12}},\\
f_{10}&=\overline{x_7x_{13}x_{14}x_8},\\
f_{11}&=\overline{x_7x_{18}x_{23}x_{13}},\\
f_{12}&=\overline{x_8x_{14}x_9},\\
f_{13}&=\overline{x_9x_{14}x_{15}x_{10}},
\end{aligned}\qquad\begin{aligned}
f_{14}&=\overline{x_{10}x_{16}x_{11}},\\
f_{15}&=\overline{x_{10}x_{15}x_{16}},\\
f_{16}&=\overline{x_{11}x_{16}x_{20}x_{12}},\\
f_{17}&=\overline{x_{13}x_{23}x_{24}x_{14}},\\
f_{18}&=\overline{x_{14}x_{19}x_{15}},\\
f_{19}&=\overline{x_{14}x_{24}x_{19}},\\
f_{20}&=\overline{x_{15}x_{19}x_{20}x_{16}},\\
f_{21}&=\overline{x_{17}x_{21}x_{22}x_{18}},\\
f_{22}&=\overline{x_{17}x_{28}x_{29}x_{21}},\\
f_{23}&=\overline{x_{18}x_{22}x_{23}},\\
f_{24}&=\overline{x_{19}x_{27}x_{20}},\\
f_{25}&=\overline{x_{19}x_{24}x_{25}},\\
f_{26}&=\overline{x_{19}x_{25}x_{26}},
\end{aligned}\qquad\begin{aligned}
f_{27}&=\overline{x_{19}x_{26}x_{34}x_{27}},\\
f_{28}&=\overline{x_{20}x_{27}x_{35}},\\
f_{29}&=\overline{x_{21}x_{30}x_{22}},\\
f_{30}&=\overline{x_{21}x_{29}x_{30}},\\
f_{31}&=\overline{x_{22}x_{30}x_{31}x_{23}},\\
f_{32}&=\overline{x_{23}x_{32}x_{24}},\\
f_{33}&=\overline{x_{23}x_{31}x_{32}},\\
f_{34}&=\overline{x_{24}x_{33}x_{34}x_{25}},\\
f_{35}&=\overline{x_{24}x_{32}x_{33}},\\
f_{36}&=\overline{x_{25}x_{34}x_{26}},\\
f_{37}&=\overline{x_{27}x_{34}x_{35}},\\
f_{38}&=\overline{x_{28}x_{37}x_{29}},\\
f_{39}&=\overline{x_{28}x_{41}x_{37}},
\end{aligned}\qquad\begin{aligned}
f_{40}&=\overline{x_{29}x_{37}x_{38}x_{30}},\\
f_{41}&=\overline{x_{30}x_{39}x_{31}},\\
f_{42}&=\overline{x_{30}x_{38}x_{39}},\\
f_{43}&=\overline{x_{31}x_{39}x_{43}x_{32}},\\
f_{44}&=\overline{x_{32}x_{36}x_{33}},\\
f_{45}&=\overline{x_{32}x_{40}x_{34}x_{36}},\\
f_{46}&=\overline{x_{32}x_{43}x_{40}},\\
f_{47}&=\overline{x_{33}x_{36}x_{34}},\\
f_{48}&=\overline{x_{34}x_{40}x_{44}},\\
f_{49}&=\overline{x_{37}x_{42}x_{38}},\\
f_{50}&=\overline{x_{37}x_{41}x_{42}},\\
f_{51}&=\overline{x_{38}x_{42}x_{43}x_{39}},\\
f_{52}&=\overline{x_{40}x_{43}x_{44}}.
\end{aligned}
\]
We can now state the alternative descriptions of $u_1,u_2$ and $F$. The function $u_1$ can be defined on the vertices of the above decomposition:
\[
u_1(x_i)=\begin{cases}5;&i=40,\\1;&i=4,5,6,9,13,14,23,24,31,33,36,37,38,39,\\0;&\text{elsewhere.}\end{cases}
\]
Similarly, we have
\[
u_2(x_i)=\begin{cases}5;&i=27,\\1;&i=6,11,13,14,15,16,21,22,23,24,25,26,29,37,\\0;&\text{elsewhere.}\end{cases}
\]
Summing these, we obtain
\[
F(x_i)=\begin{cases}5;&i=27,40,\\2;&i=6,13,14,23,24,37,\\1;&i=4,5,9,11,15,16,21,22,25,26,29,31,33,36,38,39,\\0;&\text{elsewhere.}\end{cases}
\]
The decomposition (and the function $F$) can be pictured as follows:

\hbox{}

\begin{center}
\includegraphics[width=170pt]{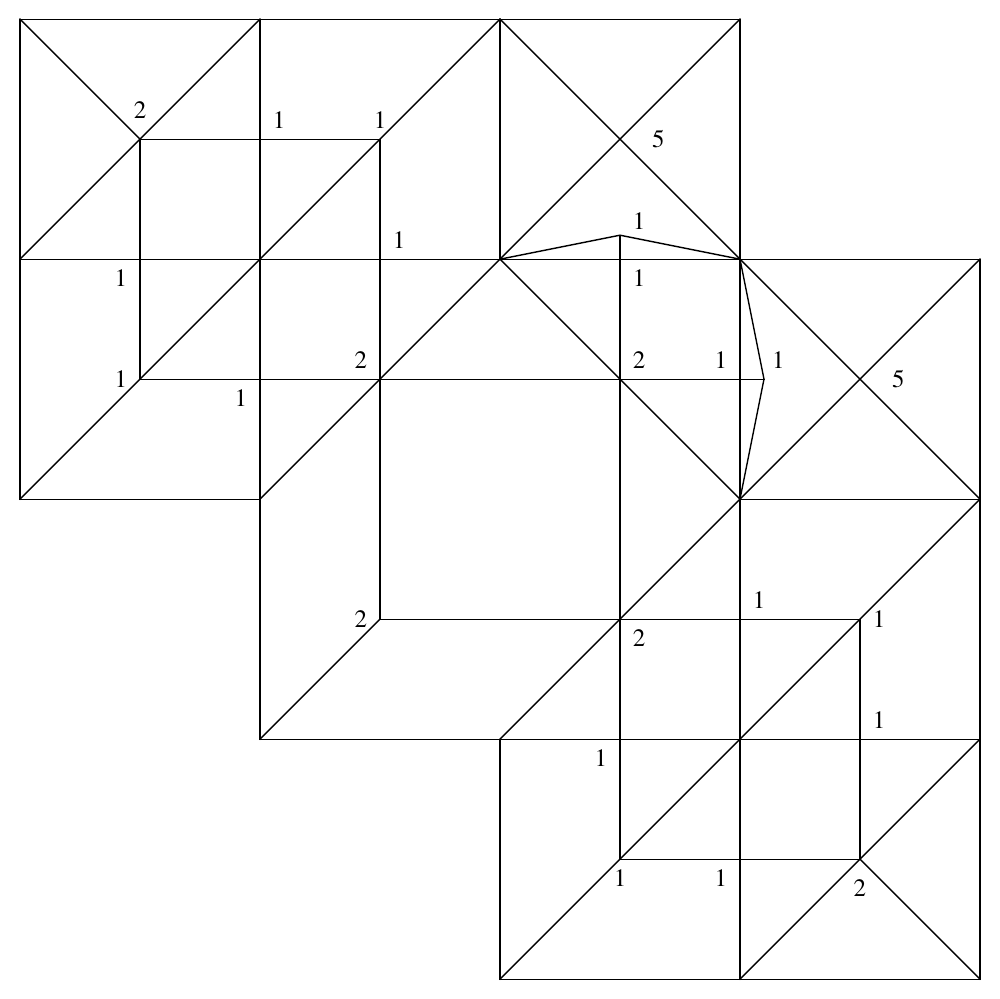}
\end{center}

\hbox{}

\subsection{Second Example}\label{direct2}

For the same reason as our first example, $f:\RR^2\to[0,\infty)$ is actually a piecewise linear function, so it can be described by its function values at the vertices of a polygonal decomposition of its support $S=\supp f$. This decomposition consists of $47$ vertices, $94$ edges and $46$ faces, namely triangles, trapezoids and two non-convex quadrilaterals. Again note that not every choice of values at the vertices of a trapezoid can be extended to a linear function, but in our case such issues do not arise as the two function values on each of two parallel sides agree. We first list the vertices (indexed lexicographically):
\[
\begin{aligned}
x_1&=(-7, -7),\\
x_2&=(-7, 7),\\
x_3&=(-6, -6),\\
x_4&=(-6, 6),\\
x_5&=(-5, -5),\\
x_6&=(-5, -1),\\
x_7&=(-5, 1),\\
x_8&=(-5, 5),\\
x_9&=(-4, 0),\\
x_{10}&=(-\tfrac{10}3, 0),\\
x_{11}&=(-3, -5),\\
x_{12}&=(-3, -1),
\end{aligned}\qquad\begin{aligned}
x_{13}&=(-3, 1),\\
x_{14}&=(-3, 5),\\
x_{15}&=(-2, -4),\\
x_{16}&=(-2, 0),\\
x_{17}&=(-2, 4),\\
x_{18}&=(-1, -5),\\
x_{19}&=(-1, -3),\\
x_{20}&=(-1, -1),\\
x_{21}&=(-1, 1),\\
x_{22}&=(-1, 3),\\
x_{23}&=(-1, 5),\\
x_{24}&=(0, -6),
\end{aligned}\qquad\begin{aligned}
x_{25}&=(0, -4),\\
x_{26}&=(0, 0),\\
x_{27}&=(0, 4),\\
x_{28}&=(0, 6),\\
x_{29}&=(1, -7),\\
x_{30}&=(1, -5),\\
x_{31}&=(1, -3),\\
x_{32}&=(1, -1),\\
x_{33}&=(1, 1),\\
x_{34}&=(1, 3),\\
x_{35}&=(1, 5),\\
x_{36}&=(1, 7),
\end{aligned}\qquad\begin{aligned}
x_{37}&=(2, -4),\\
x_{38}&=(2, 0),\\
x_{39}&=(2, 4),\\
x_{40}&=(3, -5),\\
x_{41}&=(3, -1),\\
x_{42}&=(3, 1),\\
x_{43}&=(3, 5),\\
x_{44}&=(\tfrac{10}3, 0),\\
x_{45}&=(4, 0),\\
x_{46}&=(5, -1),\\
x_{47}&=(5, 1).
\end{aligned}
\]
We again omit listing the edges $h_1,h_2,\ldots,h_{94}$ and proceed to the faces:
\[
\mkern-12mu\begin{aligned}
f_1&=\overline{x_1x_3x_4x_2},\\
f_2&=\overline{x_1x_{29}x_{24}x_3},\\
f_3&=\overline{x_2x_4x_{28}x_{36}},\\
f_4&=\overline{x_3x_5x_8x_4},\\
f_5&=\overline{x_3x_{24}x_{18}x_5},\\
f_6&=\overline{x_4x_8x_{23}x_{28}},\\
f_7&=\overline{x_6x_9x_7},\\
f_8&=\overline{x_6x_{12}x_9},\\
f_9&=\overline{x_7x_9x_{13}},\\
f_{10}&=\overline{x_9x_{12}x_{10}x_{13}},\\
f_{11}&=\overline{x_{10}x_{12}x_{16}},\\
f_{12}&=\overline{x_{10}x_{16}x_{13}},
\end{aligned}\qquad\begin{aligned}
f_{13}&=\overline{x_{11}x_{15}x_{16}x_{12}},\\
f_{14}&=\overline{x_{11}x_{18}x_{25}x_{15}},\\
f_{15}&=\overline{x_{13}x_{16}x_{17}x_{14}},\\
f_{16}&=\overline{x_{14}x_{17}x_{27}x_{23}},\\
f_{17}&=\overline{x_{15}x_{19}x_{20}x_{16}},\\
f_{18}&=\overline{x_{15}x_{25}x_{19}},\\
f_{19}&=\overline{x_{16}x_{21}x_{22}x_{17}},\\
f_{20}&=\overline{x_{16}x_{20}x_{26}},\\
f_{21}&=\overline{x_{16}x_{26}x_{21}},\\
f_{22}&=\overline{x_{17}x_{22}x_{27}},\\
f_{23}&=\overline{x_{18}x_{24}x_{25}},\\
f_{24}&=\overline{x_{19}x_{25}x_{26}x_{20}},
\end{aligned}\qquad\begin{aligned}
f_{25}&=\overline{x_{21}x_{26}x_{27}x_{22}},\\
f_{26}&=\overline{x_{23}x_{27}x_{28}},\\
f_{27}&=\overline{x_{24}x_{29}x_{30}x_{25}},\\
f_{28}&=\overline{x_{25}x_{31}x_{32}x_{26}},\\
f_{29}&=\overline{x_{25}x_{30}x_{40}x_{37}},\\
f_{30}&=\overline{x_{25}x_{37}x_{31}},\\
f_{31}&=\overline{x_{26}x_{33}x_{34}x_{27}},\\
f_{32}&=\overline{x_{26}x_{32}x_{38}},\\
f_{33}&=\overline{x_{26}x_{38}x_{33}},\\
f_{34}&=\overline{x_{27}x_{35}x_{36}x_{28}},\\
f_{35}&=\overline{x_{27}x_{34}x_{39}},
\end{aligned}\qquad\begin{aligned}
f_{36}&=\overline{x_{27}x_{39}x_{43}x_{35}},\\
f_{37}&=\overline{x_{31}x_{37}x_{38}x_{32}},\\
f_{38}&=\overline{x_{33}x_{38}x_{39}x_{34}},\\
f_{39}&=\overline{x_{37}x_{40}x_{41}x_{38}},\\
f_{40}&=\overline{x_{38}x_{41}x_{44}},\\
f_{41}&=\overline{x_{38}x_{42}x_{43}x_{39}},\\
f_{42}&=\overline{x_{38}x_{44}x_{42}},\\
f_{43}&=\overline{x_{41}x_{45}x_{42}x_{44}},\\
f_{44}&=\overline{x_{41}x_{46}x_{45}},\\
f_{45}&=\overline{x_{42}x_{45}x_{47}},\\
f_{46}&=\overline{x_{45}x_{46}x_{47}}.
\end{aligned}
\]
An alternative definition of the piecewise linear function $f:\RR^2\to[0,\infty)$ is then given by specifying it on the vertices as follows:
\[
f(x_i)=\begin{cases}3;&i=9,45,\\1;&i=3,4,10,15,16,17,24,25,26,27,28,37,38,39,44,\\0;&\text{otherwise.}\end{cases}
\]

The decomposition (and the function) can be pictured as follows:

\hbox{}

\begin{center}
\includegraphics[width=170pt]{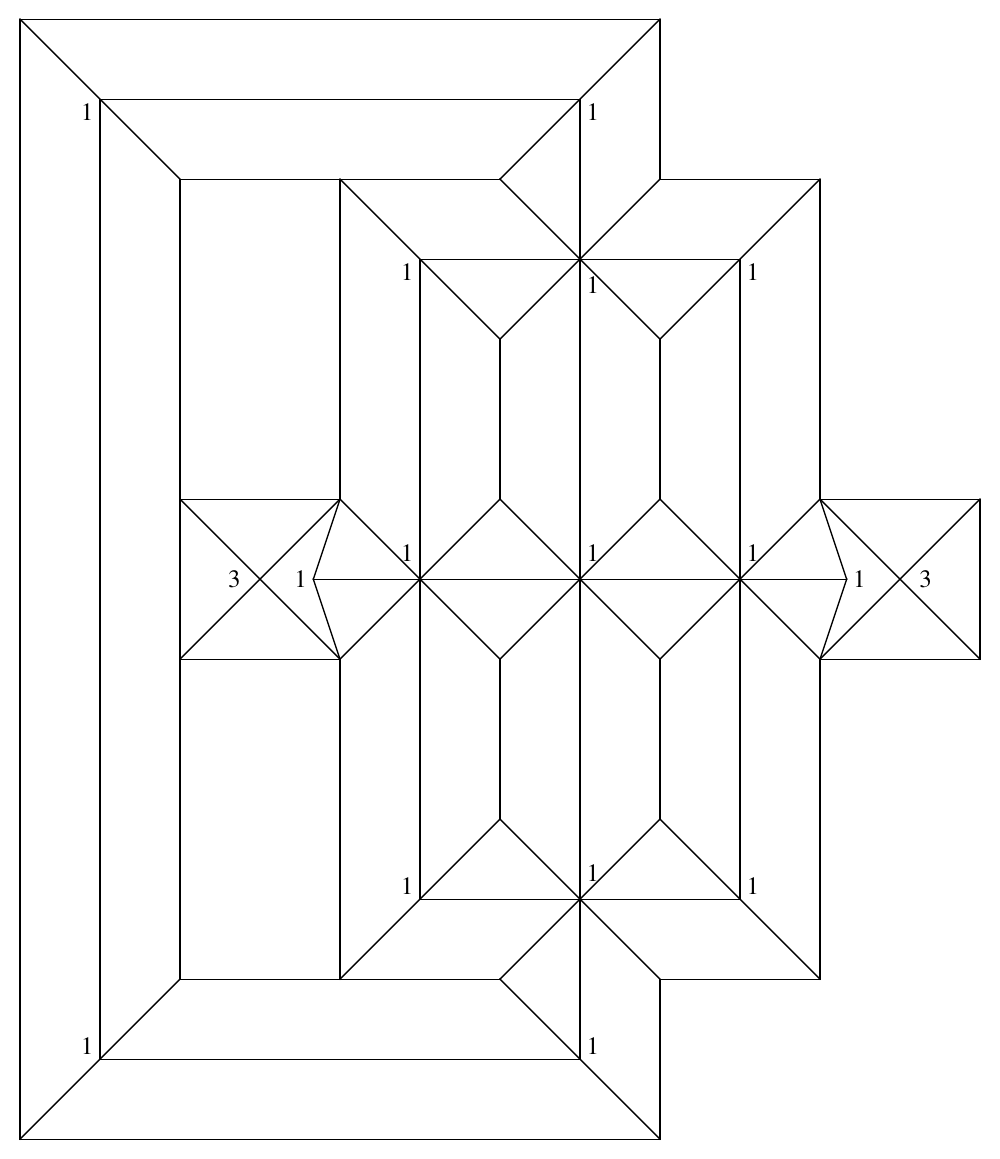}
\end{center}

\hbox{}

\begin{proposition}
The function $f:\RR^2\to[0,\infty)$ has a unimodal $\infty$-decomposition of length $2$.
\end{proposition}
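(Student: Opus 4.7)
The plan is to exhibit two continuous piecewise linear unimodal functions $u_1, u_2 : \RR^2 \to [0,\infty)$ whose pointwise maximum is $f$, in direct analogy with the construction from the second graph counterexample (Section~\ref{graphs2}). The two functions will have peaks $u_1(d_0) = u_2(e_0) = 3$.

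The essential topological issue is that the superlevel set $Q = f^{-1}[1,\infty)$ carries three independent $1$-cycles: the outer rectangle $K$, the inner rectangle $P$, and the cycle formed by an arc of $P$ together with the middle portion of $R$ between $d_6$ and $e_6$. Since each superlevel set of a unimodal function must be contractible, each of these cycles must be broken in every superlevel set of each $u_i$. This is achieved by placing ``zero arcs'' of $u_1$ and of $u_2$ disjointly along each cycle, so that $\max\{u_1,u_2\} \equiv 1 = f$ along the entire skeleton of $Q$.

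Concretely, I would refine the polygonal decomposition of Section~\ref{direct2} by inserting auxiliary vertices along appropriate edges of $K$, of $P$, and of the middle segment of $R$. I would then define $u_1$ and $u_2$ by specifying values at every vertex: $u_1 \equiv f$ on the peak region $D$ and $u_2 \equiv f$ on $E$; at each remaining skeleton vertex of $Q$, each $u_i \in \{0,1\}$, with the zero-vertices of $u_1$ and $u_2$ lying on disjoint arcs of each cycle and both chosen to equal $1$ at $b$, $d_6$ and $e_6$; both $u_i$ vanish outside $\supp f$. Extending piecewise linearly and verifying face by face, one checks $\max\{u_1,u_2\} = f$ pointwise. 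For unimodality, since each $u_i$ is piecewise linear on a finite polygonal decomposition, the topology of the superlevel set $u_i^{-1}[c,\infty)$ changes only at the finitely many vertex values, in this case $c \in \{0,1,3\}$. For $c \in (1,3]$ the superlevel set is a small neighborhood of the peak, clearly contractible. For $c \in (0,1]$ it deformation retracts onto the peak region (a filled rectangle) together with a spanning tree of the skeleton of $Q$ obtained by removing the zero arcs from the cycles, hence is contractible. Thus each $u_i$ is unimodal.

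The main obstacle is the correct placement of the zero arcs in a neighborhood of the intersection points $b = (0,0)$, $d_6 = (-2,0)$, $e_6 = (2,0)$, where the skeleton cycles of $Q$ meet. A naive reflection-symmetric ansatz taking $u_2 = u_1 \circ \mathrm{refl}_{x=0}$ fails on two counts: $K$ itself is not symmetric under reflection across the $y$-axis, and any such choice would force $u_1(b) = u_2(b) = 0$, contradicting $f(b) = 1$. Instead, both $u_1$ and $u_2$ must take value $1$ at $b$, $d_6$ and $e_6$, and their zero arcs must lie on disjoint portions of each cycle chosen so that together they still cover that cycle. With a suitable explicit choice (verifiable directly on the refined decomposition), all the required conditions are satisfied and the proposition follows.
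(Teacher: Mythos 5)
Your approach is essentially the same as the paper's: realize $f$ as $\max\{u_1,u_2\}$ with $u_1, u_2$ piecewise linear, $\{0,1\}$-valued on the skeleton away from the peaks, and with disjointly placed zero arcs that break the cycles of $Q=f^{-1}[1,\infty)$ so that $\max\{u_1,u_2\}=1=f$ holds on the whole skeleton. You correctly reject the naive reflection-symmetric ansatz, and your unimodality argument (finitely many threshold values of $c$, deformation retract to a spanning tree) has the right shape.

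There is, however, a genuine topological gap. The superlevel set $Q$ has first Betti number $5$, not $3$. The outer rectangle and the inner rectangle $P$ are not disjoint: the right edge $\{0\}\times[-6,6]$ of the outer rectangle crosses the top and bottom edges of $P$ at $a=(0,4)$ and $c=(0,-4)$, so already $b_1(\text{outer}\cup P)=3$; adjoining $R$, which meets this union in the three points $b$, $d_6$, $e_6$, adds two more independent cycles — for instance the triangle $b\to a\to d_6\to b$ (arc of the outer rectangle, arc of $P$, arc of $R$) and the triangle $b\to c\to d_6\to b$. These are not generated by the three cycles you list. Consequently, placing zero arcs only along your three cycles leaves $b_1\geq 2$ in the relevant superlevel sets of the $u_i$, so ``removing the zero arcs'' does \emph{not} leave a spanning tree, and the $u_i$ produced by your recipe would not be unimodal. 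The paper's proof avoids this by a two-stage construction: first $v_1,v_2$ are chosen to vanish on the entire half of the skeleton beyond $b,d_6,e_6$ (which already brings each $v_i^{-1}[1,\infty)$ down to $b_1=3$), and only then are three tent corrections per function subtracted on pairwise-disjoint rectangles to kill the remaining cycles. Your sketch would need a comparable extra reduction before the spanning-tree argument applies. (A small aside: it is neither necessary nor the paper's choice to have both $u_i$ equal $1$ at $d_6$ and $e_6$; the paper takes $v_1(e_6)=0$ and $v_2(d_6)=0$.)
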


\begin{proof}
Define two piecewise linear functions $v_1,v_2:\RR^2\to[0,\infty)$ on a polygonal decomposition of $S$ by specifying their values at the vertices. Start with the decomposition of $S$ defined above. Further subdivide it by adding the edges $x_{15}x_{18}, x_{17}x_{23}, x_{30}x_{37}, x_{35}x_{39}$. The function $v_1$ assumes the value $1$ at the vertices $x_3,x_4,x_{10},x_{15},x_{16},x_{17},x_{24},x_{25},x_{26},x_{27},x_{28}$; the value $3$ at the vertex $x_9$; and the value $0$ at all the other vertices. The function $v_2$ assumes the value $1$ at the vertices $x_3,x_4,x_{24},x_{25},x_{26},x_{27},x_{28},x_{37},x_{38},x_{39},x_{44}$; the value $3$ at the vertex $x_{45}$; and the value $0$ at all the other vertices. It is clear that $\max\{v_1,v_2\}=f$, however, these functions are not unimodal, as $x_3x_{24}x_{28}x_4x_3$ yields a non-trivial cycle in some of the superlevel sets. However, we can modify $v_1$ and $v_2$ to obtain unimodal functions $u_1,u_2:\RR^2\to[0,\infty)$. To retain the property $\max\{u_1,u_2\}=f$, we modify them on sets with disjoint interiors, namely, $u_1$ is modified on the sets $R_1=[-1,1]\times[2,3]$, $R_2=[-1,1]\times[-2,-1]$ and $R_3=[-7,-5]\times[-2,-1]$, whereas $u_2$ is modified on the sets $R_4=[-1,1]\times[1,2]$, $R_5=[-1,1]\times[-3,-2]$ and $R_6=[-7,-5]\times[1,2]$. In fact, they are modified in the same way on each of these. Namely, if $R$ is of one of these rectangles, subdivide it into four triangles using the center point of $R$. Then define a piecewise linear function on $R$ that takes the value $1$ at the center point of $R$ and value $0$ at the vertices of the rectangle, and extend it by $0$ to the whole plane to obtain a function $\varphi_R:\RR^2\to[0,\infty)$. Now, $u_1$ and $u_2$ are defined by putting $u_1=v_1-\varphi_{R_1}-\varphi_{R_2}-\varphi_{R_3}$ and $u_2=v_2-\varphi_{R_4}-\varphi_{R_5}-\varphi_{R_6}$. (These functions are piecewise linear w.r.t. a further subdivision of $S$ that takes into account the six rectangles.) A straightforward verification shows that $u_1$ and $u_2$ are unimodal, but it is ultimately unenlightening, so instead of this, we provide the graphs of the two functions.
\end{proof}

\begin{center}
\includegraphics[width=170pt]{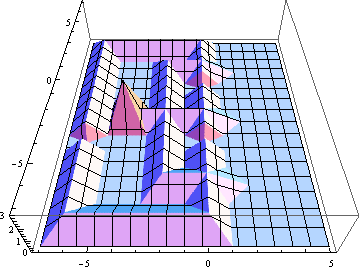}\qquad\includegraphics[width=170pt]{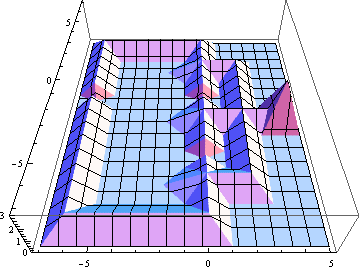}
\end{center}

\begin{proposition}
The function $f:\RR^2\to[0,\infty)$ has a unimodal decomposition of length $3$.
\end{proposition}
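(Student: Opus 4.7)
The plan is to exhibit three piecewise linear unimodal functions $u_1,u_2,u_3:\RR^2\to[0,\infty)$ with $u_1+u_2+u_3=f$, in complete analogy with the length-$3$ decomposition in the graph case. The construction is guided by the proof that $\ucat(f)\geq 3$: any unimodal decomposition of length $2$ fails because the three cut points $a=(0,4)$, $b=(0,0)$, $c=(0,-4)$ of the plateau $Q=f^{-1}[1,\infty)$ cannot all be handled by two contractible superlevel sets at level $1$. With three summands available, we simply assign one summand to each cut point.

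Concretely, $Q = D\cup E\cup P\cup R\cup K$ has first Betti number $2$, with cycles $K$ (outer) and $P$ (inner), and the three points $a,b,c$ lie on the vertical segment $\{0\}\times[-6,6]$ of $K$, with $a,c$ also on $P$ and $b$ also on $R$. The natural strategy is to split $Q$ into three contractible closed subregions $Q_1,Q_2,Q_3$ whose union is $Q$ and whose pairwise intersections are concentrated near $a,b,c$. One concrete possibility: let $Q_1$ contain $D$ together with a tree-shaped arc reaching from $d_0$ out to one cut point; let $Q_2$ contain $E$ together with a tree-shaped arc reaching from $e_0$ out to another cut point; and let $Q_3$ be the closure of the complement in $Q$, chosen so that it is a tree containing the third cut point (this is possible because two cuts, placed near the chosen cut points, suffice to break both cycles of $Q$ simultaneously). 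Each $Q_i$ is then a tree, and hence contractible.

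We then define $u_i$ by specifying piecewise linear values on a suitable refinement of the polygonal decomposition of $\supp f$: set $u_1(d_0)=3$ and $u_2(e_0)=3$; on the plateau, prescribe $u_i=1$ on the bulk of $Q_i$ and $u_i=0$ outside a small neighborhood of $Q_i$, with small linear transition regions near $a,b,c$ tuned so that $u_1+u_2+u_3\equiv 1$ throughout $Q$. Off the plateau, $u_1$ and $u_2$ interpolate linearly from $3$ down to $1$ on $D$ and $E$ respectively and then down to $0$ on $\partial\supp f$, while $u_3$ vanishes. Unimodality of each $u_i$ then reduces to checking that every superlevel set $u_i^{-1}[r,\infty)$ is contractible: for $r>1$ only $u_1$ and $u_2$ have nonempty superlevel sets, each a small disk around the corresponding peak; for $r\in(0,1]$ each superlevel set thickens to a neighborhood of the tree $Q_i$, which remains contractible. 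The main obstacle is the combinatorial bookkeeping of an explicit polygonal decomposition that realises all of this with a consistent sum, but this is routine and can be presented in the same style as the appendix describing the length-$2$ unimodal $\infty$-decomposition.
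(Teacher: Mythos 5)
Your proposal follows the same general strategy as the paper --- exhibit an explicit piecewise-linear length-$3$ decomposition by assigning $u_1,u_2$ peaks at $d_0,e_0$ with supports shaped like trees through the cut points, and a small $u_3$ centered at $b$ --- but one of the topological observations driving your construction is wrong, and as stated the reasoning has a gap.

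The set $Q = D\cup E\cup P\cup R\cup K$ does \emph{not} have first Betti number $2$. The rectangles $K$ and $P$ are not nested disjointly: $K$ spans $[-6,0]\times[-6,6]$ while $P$ spans $[-2,2]\times[-4,4]$, so they overlap and their boundary cycles intersect at $a=(0,4)$ and $c=(0,-4)$; moreover $R$ crosses $P$ at $d_6=(-2,0)$ and $e_6=(2,0)$ and crosses $K$ at $b=(0,0)$. A vertex--edge count for the graph skeleton of $Q$ (with $D,E$ contractible) gives $15$ vertices, $19$ edges, hence $b_1(Q)=E-V+1=5$. Your justification that ``two cuts, placed near the chosen cut points, suffice to break both cycles of $Q$ simultaneously'' therefore fails: two cuts leave at least three independent cycles intact, so the ``closure of the complement'' $Q_3$ is not a tree if $Q_1,Q_2$ are only ``$D$ plus a tree-shaped arc to one cut point'' and ``$E$ plus a tree-shaped arc to another.'' To make the decomposition work, $Q_1$ and $Q_2$ must be substantially larger trees that together sweep out almost all of $Q$, leaving only a small contractible neighbourhood of $b$ for $Q_3$. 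This is what the paper's explicit vertex-value assignment achieves: $u_1$ takes the value $1$ along $d_0\to d_6$, the left and top of $P$, through $a$ and along the $k_4$--$k_3$ portion of $K$; $u_2$ takes the value $1$ along $e_0\to e_6$, the right and bottom of $P$, through $c$ and along $k_1$--$k_2$; and $u_3$ is a single small bump at $x_{26}=b$. Each of the resulting $u_i^{-1}[1,\infty)$ is then genuinely a tree (all five cycles of $Q$ get cut, three near $a,b,c$ and two along the boundary transitions between $Q_1$ and $Q_2$), and the linear interpolation makes $u_1+u_2+u_3=f$ exactly. So the plan is salvageable, but the explicit bookkeeping is not merely routine: it is precisely where the flaw in the ``two cycles, two cuts'' heuristic has to be corrected.
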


\begin{proof}
An explicit unimodal decomposition of $f:\RR^2\to[0,\infty)$ can be described as follows: take the polygonal decomposition of $S$ as defined in the beginning. Further subdivide it by adding the edges $x_1x_4, x_4x_5, x_{15}x_{18}, x_{20}x_{25}, x_{21}x_{27}, x_{25}x_{32}, x_{27}x_{33}, x_{35}x_{39}$. Now, define piecewise linear functions $u_1,u_2,u_3:\RR^2\to[0,\infty)$ by specifying their values at the vertices of this decomposition. For the sake of brevity, we only specify the nonzero values. The function $u_1$ is defined by taking the value $3$ at the vertex $x_9$ and the value $1$ at the vertices $x_4,x_{10},x_{15},x_{16},x_{17},x_{27},x_{28}$. The function $u_2$ is defined by taking the value $3$ at the vertex $x_{45}$ and the value $1$ at the vertices $x_3,x_{24},x_{25},x_{37},x_{38},x_{39},x_{44}$. Finally, $u_3$ takes the value $1$ at the vertex $x_{26}$. It is straightforward to verify that these are all unimodal and that $f=u_1+u_2+u_3$. 
\end{proof}

\begin{center}
\includegraphics[width=170pt]{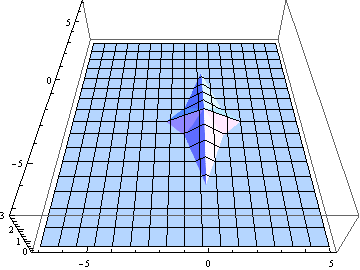}\qquad\includegraphics[width=170pt]{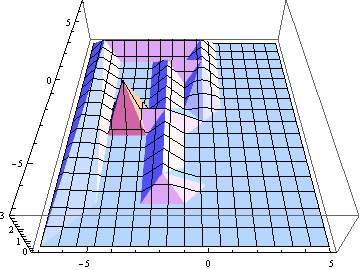}\qquad\includegraphics[width=170pt]{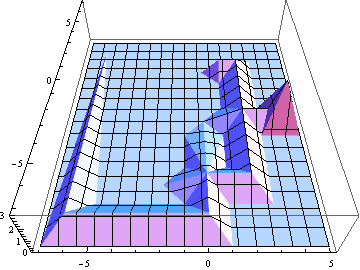}
\end{center}

\section{Algorithm for the Circle}\label{algorithms}

Theorem \ref{sweeping} provides a generalization of the sweeping algorithm computing $\ucat(f)$ for $f:\RR\to[0,\infty)$ with finitely many critical points, given in \cite{baryshnikov}. Here we describe an algorithm to compute $\ucat(f)$ for $f:S^1\to[0,\infty)$ in the case of finitely many critical points. Although this is possible, as the proofs of the theorems leading to the algorithm are constructive in nature, we do not compute the explicit decomposition.

\begin{algorithm}
\caption{Computing $\ucat$ for the Circle}
\begin{algorithmic}
\Require $f:S^1\to[0,\infty)$ without zeros and with critical points $x_1,\ldots,x_k\in S^1$.
\For{$i=1,\ldots,k$}
\State $f_i(t):=\begin{cases}
(t+1)f(x_i);& t\in[-1,0],\\
f(x_i e^{2\pi i t});&t\in[0,1],\\
(2-t)f(x_i);&t\in[1,2],\\
0;&t\notin[-1,2].
\end{cases}$
\EndFor
\For{$i=1,\ldots,k$}
\State $j=0$
\State $t_0=-\infty$
\While{$t_j<1$}
\State $j:=j+1$
\State $t_j=\min\{(t_{j-1},t)\mid V^-(f_i;(t_{j-1},t))>f_i(t_{j-1})\}$
\EndWhile
\State $\alpha_i:=j-1$
\EndFor
\Return $\min_{1\leq i\leq k}\alpha_i$.
\end{algorithmic}
\end{algorithm}

\end{document}